\newcommand{\C}{\mathbb{C}}
\theoremstyle{plain}
\newtheorem{theorem}{Teorema}[section]
\newtheorem{proposition}[theorem]{Proposición}
\newtheorem{lemma}[theorem]{Lema}
\newtheorem{corollary}[theorem]{Corolario}
\theoremstyle{definition}
\newtheorem{definition}[theorem]{Definición}
\newtheorem{example}[theorem]{Ejemplo}
\theoremstyle{remark}
\newtheorem{remark}[theorem]{Observación}
\newtheorem{exercise}[theorem]{Ejercicio}
\newcommand{\Bool}{\mathbf{Bool}}
\newcommand{\Stone}{\mathbf{Stone}}
\newcommand{\ProFin}{\mathbf{ProFin}}
\newcommand{\Cond}{\mathbf{Cond}}
\newcommand{\Set}{\mathbf{Set}}
\newcommand{\Top}{\mathbf{Top}}
\newcommand{\Clop}{\operatorname{Clop}}
\newcommand{\Sub}{\operatorname{Sub}}
\newcommand{\Spec}{\operatorname{Spec}}
\newcommand{\Gal}{\operatorname{Gal}}
\newcommand{\Zp}{\mathbb{Z}_p}
\newcommand{\Cont}{\operatorname{Cont}}
\newcommand{\Hom}{\operatorname{Hom}}
\newcommand{\CHaus}{\mathbf{CHaus}}
\newcommand{\Z}{\mathbb{Z}}
\newcommand{\R}{\mathbb{R}}
\title{El mar que envuelve a las piedras\\[0.2cm]
\large \textit{Espacios de Stone, profinitos y su papel en la aritmética contemporánea}}
\author{J.~R. Pérez-Buendía\orcidlink{0000-0002-7739-4779}\\[0.1cm]
\small SECIHTI--CIMAT, Unidad Mérida\\
\small \texttt{rogelio.perez@cimat.mx}}
\date{\today}
\begin{document}

\maketitle

\begin{abstract}
\noindent
Este artículo de revisión y exposición presenta una descripción sistemática de
la dualidad de Stone, los espacios profinitos y la compactificación de
Stone--Čech, desde una perspectiva aritmética moderna que conecta estos conceptos
clásicos con las Matemáticas Condensadas de Clausen--Scholze.

Muchos objetos fundamentales en aritmética y geometría aritmética ---como los
enteros $p$-ádicos, los grupos de Galois absolutos y las variedades algebraicas
sobre campos de números--- poseen una estructura que combina aspectos discretos y
continuos. La dualidad de Stone establece una anti\-equivalencia entre la
categoría de álgebras de Boole y la categoría de espacios compactos, Hausdorff y
totalmente desconectados (espacios de Stone). Dentro de esta clase, los espacios
profinitos —aquellos que se describen como límites inversos de espacios finitos—
desempeñan un papel central en aritmética, pues modelan de forma natural estos
objetos aritméticos fundamentales.

El propósito de este artículo es describir de manera sistemática esta relación
desde un punto de vista aritmético: construimos el espacio de Stone asociado a un
álgebra de Boole, revisamos el teorema de representación de Stone y explicamos
cómo los espacios profinitos surgen como límites inversos de sistemas finitos.
Introducimos además la compactificación de Stone--Čech a partir de su propiedad
universal y discutimos en detalle ejemplos aritméticos relevantes (enteros
$p$-ádicos, grupos de Galois, aplicaciones en geometría aritmética). Finalmente,
mostramos cómo estos conceptos clásicos encuentran su lugar natural en el marco
de las Matemáticas Condensadas, donde los profinitos juegan el papel de
generadores fundamentales de la categoría de conjuntos condensados.

El texto está dirigido principalmente a estudiantes de posgrado en teoría de números, geometría aritmética o álgebra, que posean conocimientos básicos de topología general y teoría de categorías. Las primeras secciones (hasta antes de la sección de Matemáticas Condensadas) pueden ser leídas por estudiantes de licenciatura motivados con una base sólida en topología. La sección final sobre Matemáticas Condensadas requiere familiaridad con límites inversos, topología de Grothendieck y nociones básicas de sitios y gavillas.

Se proporciona una exposición unificada de resultados clásicos desde una perspectiva moderna, reorganizando y reinterpretando el material establecido con miras a la aritmética contemporánea y sus aplicaciones en geometría aritmética. Para lectores que necesiten repasar conceptos de topología o teoría de categorías, recomendamos consultar \cite{Engelking1989} y \cite{MacLane1998} respectivamente.
\end{abstract}

\tableofcontents

\vspace{0.7cm}

\section{Introducción: la tensión entre lo discreto y lo continuo}\label{sec:intro}

\subsection{Motivación aritmética}

Muchos de los objetos fundamentales en aritmética y geometría aritmética poseen
una estructura que combina aspectos discretos y continuos de manera sutil. Los
enteros $p$-ádicos $\mathbb{Z}_p$, por ejemplo, pueden pensarse como números
"infinitos" que codifican información sobre congruencias módulo potencias de
$p$. Los grupos de Galois absolutos $G_K = \Gal(\overline{K}/K)$ de un campo
$K$ son grupos "infinitos" que controlan todas las extensiones algebraicas de
$K$, pero su estructura se revela mejor cuando se los estudia como límites de
grupos finitos. Las variedades aritméticas ---objetos geométricos definidos
sobre campos de números--- presentan fenómenos que requieren entender cómo la
topología interactúa con estructuras algebraicas discretas.

El problema central que motiva este artículo es el siguiente: cuando intentamos
aplicar las herramientas estándar de topología y álgebra homológica a estos
objetos aritméticos, encontramos dificultades técnicas fundamentales. La
categoría de grupos topológicos no es abeliana, lo que impide usar álgebra
homológica estándar. Las cohomologías requieren definiciones ad hoc y cuidados
técnicos constantes. Los productos tensoriales y completaciones fallan en su
compatibilidad con la topología. Estas dificultades no son meros obstáculos
técnicos: reflejan una incompatibilidad profunda entre la topología clásica
(que privilegia la continuidad) y la estructura aritmética (que es
fundamentalmente discreta y filtrada).

La teoría que desarrollaremos se apoya en resultados clásicos. La dualidad de Stone entre álgebras de
Boole y espacios compactos, Hausdorff y totalmente desconectados data de los
años treinta \cite{Stone1936}; la identificación de estos espacios con los profinitos está
plenamente establecida \cite{Johnstone1982}; y la compactificación de Stone--Čech se describe en
detalle en monografías estándar \cite{ComfortNegrepontis1974,Johnstone1982,RibesZalesskii2010}.
Precisamente por esa solidez, estos resultados constituyen un terreno
privilegiado para ser revisitados: permiten reorganizar la teoría, precisar el
diccionario álgebra--topología y reinterpretar las construcciones básicas desde
un punto de vista más estructural, adaptado a la aritmética contemporánea y a
las formulaciones categóricas modernas. Para exposiciones modernas de estos temas, véase también \cite{DaveyPriestley2002,Engelking1989}.

El contexto matemático en el que se inscriben hoy estas ideas es radicalmente
distinto al de hace cincuenta años. Uno de los lugares donde esa diferencia se
hace más visible es el programa de \emph{Matemáticas Condensadas} de
Clausen--Scholze \cite{ScholzeCondensed,ScholzeCondensed2,Scholze2019,Bhatt2022}, que surge de la necesidad de un nuevo formalismo para la geometría $p$-ádica y la cohomología aritmética \cite{Scholze2012}. En ese marco, los espacios profinitos
---y, en particular, los espacios de Stone--- dejan de ser ejemplos
periféricos: el sitio de profinitos se convierte en la base sobre la cual se
definen gavillas, se reconstruyen nociones analíticas y se formulan teorías
cohomológicas adaptadas a la aritmética \cite{FarguesScholze2021}. Este artículo presenta la dualidad de
Stone, la descripción profinita y la compactificación de Stone--Čech desde esa
perspectiva, sin pretender reproducir exhaustivamente todas las pruebas
clásicas.

Este artículo es el primero de una serie de artículos expositivos dedicados a
explorar las relaciones entre álgebra y topología desde una perspectiva aritmética
y categórica moderna. El proyecto completo tiene como objetivo revisar, desde una
mirada aritmética y a la luz del enfoque condensado, distintas correspondencias
entre estructuras algebraicas y sus contrapartes topológicas y lógicas. En
artículos posteriores de esta serie aparecerán dualidades más generales ---como la
dualidad de Priestley para retículos distributivos, la dualidad de Esakia para
álgebras de Heyting y su relación con la lógica intuicionista--- así como la teoría
de marcos, locales y topoi elementales, todas situadas en un mismo paisaje
categórico y aritmético. El presente artículo constituye la primera capa de esa
arquitectura: la capa booleana y profinita sobre la cual se apoyan las teorías más
amplias.

\subsection{El problema: topología clásica vs estructura aritmética}

El punto de fricción sigue siendo esencialmente el mismo: la tensión persistente
entre álgebra y topología. Cuando dotamos de topologías naturales a objetos
aritméticos infinitos (grupos de Galois, anillos $p$-ádicos, grupos de puntos
de variedades), la topología clásica genera patologías bien conocidas:
\begin{itemize}
  \item categorías de grupos topológicos (o módulos topológicos) que no son
  abelianas;
  \item cohomologías que requieren definiciones ad hoc y cuidados técnicos
  constantes;
  \item productos tensoriales y completaciones cuya compatibilidad con la
  topología es delicada o directamente fallida.
\end{itemize}

En muchos contextos aritméticos, la topología clásica que se impone de manera
natural ---analítica, diferencial o compleja--- no refleja la estructura finita
y filtrada presente en el trasfondo algebraico. Cuando estudiamos
una variedad algebraica sobre un campo de números, la topología compleja (si
trabajamos con $\mathbb{C}$) o la topología $p$-ádica (si trabajamos con
$\mathbb{Q}_p$) no capturan adecuadamente la estructura de reducción módulo
primos, que es fundamental en geometría aritmética. Esa topología puede ser
excesivamente homogénea: no separa niveles discretos ni
discrimina adecuadamente subdatos de congruencia. 

Para recuperar esa granularidad lógica, conviene trabajar con espacios
totalmente desconectados, como los profinitos o los espacios de Stone, donde
los clopens codifican directamente subestructuras finitas. Estos espacios
reflejan naturalmente la estructura aritmética: cada nivel de congruencia, cada
extensión finita de Galois, cada reducción módulo una potencia de $p$, se
codifica como un clopen en el espacio correspondiente.

Si se lleva la metáfora del título un poco más lejos, la topología clásica que
solemos poner a nuestros objetos aritméticos se parece más a un bloque de roca
relativamente lisa: el mar algebraico la rodea, pero no consigue anclar en ella
una estructura combinatoria suficientemente fina. La continuidad permite
demasiadas deformaciones y muchas construcciones ---productos tensoriales,
completaciones, cohomologías--- resbalan sobre esa superficie sin quedar del
todo fijadas en la geometría.

En cambio, los espacios de Stone ---es decir, los espacios profinitos vistos a
través de su álgebra booleana de clopens--- se parecen más a un conjunto de
piedras menudas y porosas, casi una grava lógica: el agua penetra en cada
intersticio, y lo que ocurre en el mar se registra como información discreta,
codificada en clopens y en límites inversos finitos. Cada poro corresponde a un
clopen, y cada clopen a un nivel finito del sistema inverso. Desde la
perspectiva de las Matemáticas Condensadas, ese mar deja de ser un continuo
amorfo para convertirse en un medio profinito y lógico en el que estos espacios
profinitos/Stone juegan el papel de objetos generadores: sobre ellos se prueban
definiciones, se miden cohomologías y se construye la estructura condensada.

Históricamente, esta tensión entre lo algebraico y lo topológico se ha
reorganizado alrededor de ciertos hitos conceptuales. En 1936, Marshall
H.~Stone observa que la lógica booleana posee una geometría interna: toda
álgebra de Boole es isomorfa al álgebra de subconjuntos clopen de un espacio
compacto totalmente desconectado \cite{Stone1936}. Décadas más tarde, la
escuela de Grothendieck sistematiza el fenómeno de las anti\-equivalencias
álgebra–geometría (anillos versus esquemas afines, marcos versus locales),
proporcionando el lenguaje categórico para entender dualidades como
\[
  \Bool \;\simeq\; \Stone^{\mathrm{op}}.
\]

Esta manera de trabajar es cercana a la filosofía que Grothendieck describe en
sus escritos autobiográficos \cite{GrothendieckRS}: en lugar de atacar cada problema en su forma
concreta, se amplía el marco conceptual hasta que las dificultades particulares
quedan absorbidas en una teoría más general. En sus propias palabras, se deja
subir una ``marea creciente'' de abstracción que, poco a poco, cubre los
escollos locales y los integra en un paisaje más amplio y natural. En el
contexto que nos ocupa, esa marea es la topología compacta y la perspectiva
categórica; las piedras son los objetos algebraicos sobre los que se hace
visible su efecto.

En la formulación de Clausen y Scholze \cite{ScholzeCondensed}, este movimiento
se lleva un paso más allá. Los espacios profinitos no son sólo ejemplos
privilegiados, sino generadores fundamentales de una nueva categoría de
conjuntos ``condensados'' en la que la información topológica y algebraica se
amalgama en un único objeto estructurado. En lugar de estudiar $\mathbb{Z}_p$ o
un grupo de Galois aislados, se les sitúa dentro de un entorno en el que los
mapas desde profinitos controlan la noción misma de continuidad y de sección
global. La abstracción deja de ser un lujo para convertirse en el marco mínimo
en el que ciertos fenómenos aritméticos se vuelven formulables.

Este artículo se sitúa, por tanto, en la interfaz entre tres capas históricas:
la dualidad booleana clásica, la teoría profinita que la prolonga en contextos
aritméticos y el enfoque condensado que la integra en un marco categórico más
amplio. Se trata de un artículo de revisión que, sin pretender ser un tratado de lógica, 
un manual de teoría de topos, o un texto completo sobre Matemáticas Condensadas, 
ofrece una revisión organizada de espacios de Stone, espacios profinitos y
compactificación de Stone--Čech, escrita con miras a la aritmética moderna y al
andamiaje categórico que subyace al enfoque condensado.

\section{Posets, retículos y álgebras de Boole}\label{sec:bool-algebra}

\subsection{Posets y retículos}\label{subsec:posets-reticulos}

\begin{definition}\label{def:poset}
Un \textit{conjunto parcialmente ordenado} (poset) es un par $(P,\leq)$ donde $\leq$ es una relación binaria en $P$ tal que, para todos $x,y,z\in P$:
\begin{enumerate}[label=\roman*)]
  \item $x\leq x$ (reflexividad),
  \item $x\leq y$ y $y\leq x$ implican $x=y$ (antisimetría),
  \item $x\leq y$ y $y\leq z$ implican $x\leq z$ (transitividad).
\end{enumerate}
\end{definition}

\begin{definition}\label{def:lattice}
Un \textit{retículo} es un poset $(L,\leq)$ donde para todo par $a,b\in L$ existen el supremo $a\vee b$ y el ínfimo $a\wedge b$. Equivalentemente, $L$ está dotado de operaciones binarias
\[
\wedge,\vee : L\times L\to L
\]
conmutativas y asociativas, que satisfacen las leyes de absorción
\begin{equation}\label{eq:absorcion}
a\wedge (a\vee b) = a,\qquad a\vee (a\wedge b) = a,\quad \forall a,b\in L.
\end{equation}
\end{definition}

En un retículo, el orden se recupera a partir de $\wedge$:

\begin{lemma}\label{lem:orden-por-meet}
Sea $(L,\wedge,\vee)$ un retículo. Para $a,b\in L$ se tiene
\begin{equation}\label{eq:orden-por-meet}
a\leq b \;\Longleftrightarrow\; a\wedge b = a.
\end{equation}
\end{lemma}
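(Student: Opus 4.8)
The plan is to prove both implications in \eqref{eq:orden-por-meet} directly from the characterization of $a\wedge b$ as the greatest lower bound (ínfimo) of $\{a,b\}$, which is legitimate because Definición~\ref{def:lattice} defines a retículo order-theoretically. The two standing facts I will invoke repeatedly are: first, that $a\wedge b$ is a lower bound, so $a\wedge b\leq a$ and $a\wedge b\leq b$; and second, that it is the \emph{greatest} such, so any common lower bound $c$ of $a$ and $b$ satisfies $c\leq a\wedge b$. Together with reflexivity and antisymmetry from Definición~\ref{def:poset}, these suffice.

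For the forward implication, suppose $a\leq b$. Then $a$ is a common lower bound of $a$ and $b$: indeed $a\leq a$ by reflexivity and $a\leq b$ by hypothesis. By the maximality of the ínfimo, $a\leq a\wedge b$. On the other hand $a\wedge b\leq a$ holds unconditionally because $a\wedge b$ is a lower bound of the pair. Antisymmetry then forces $a\wedge b=a$.

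For the reverse implication, suppose $a\wedge b=a$. Since $a\wedge b\leq b$ (again because the ínfimo is, in particular, a lower bound of $b$), substituting the hypothesis gives $a=a\wedge b\leq b$, that is, $a\leq b$.

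The argument is essentially bookkeeping, so I do not anticipate a genuine obstacle; the only point demanding care is conceptual rather than technical. Because Definición~\ref{def:lattice} offers two equivalent descriptions of a retículo ---the order-theoretic one and the purely algebraic one via the absorption laws \eqref{eq:absorcion}--- I must be explicit that I am reading $a\wedge b$ as the ínfimo of $\{a,b\}$. Had I instead started from the algebraic axioms alone, I would first have to recover idempotence $a\wedge a=a$ (which follows by substituting $b=a$ into an absorption law) and then check that the relation defined by $a\wedge b=a$ is reflexive, antisymmetric, and transitive before identifying it with $\leq$; the equivalence asserted in Definición~\ref{def:lattice} lets me bypass that detour entirely.
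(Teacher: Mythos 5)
Your proof is correct and follows essentially the same route as the paper's: both directions are derived from the characterization of $a\wedge b$ as the ínfimo of $\{a,b\}$, using that it is a lower bound together with antisymmetry for the forward implication, and $a\wedge b\leq b$ for the converse. Your closing remark about the purely algebraic alternative (via the absorption laws) is a useful clarification but not a departure from the paper's argument.
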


\begin{proof}
Si $a\leq b$, entonces $a$ es una cota inferior de $\{a,b\}$; por definición de $a\wedge b$ se tiene $a\wedge b\leq a$. Por otra parte, $a\wedge b\leq a$ y $a$ es una cota inferior, así que $a\leq a\wedge b$. Por antisimetría, $a\wedge b=a$.

Recíprocamente, si $a\wedge b=a$, como el ínfimo es siempre menor o igual que cada uno de los elementos, $a\wedge b\leq b$ implica $a\leq b$.
\end{proof}

\begin{definition}\label{def:reticulo-distributivo}
Un retículo $(L,\wedge,\vee)$ es \textit{distributivo} si para todos $a,b,c\in L$ se cumple
\begin{equation}\label{eq:distributividad}
a\wedge (b\vee c) = (a\wedge b)\vee (a\wedge c).
\end{equation}
\end{definition}

\subsection{Álgebras de Boole}\label{subsec:boolean-algebras}

\begin{definition}\label{def:boolean-algebra}
Una \textit{álgebra de Boole} es una 6-tupla $(B,\wedge,\vee,\neg,0,1)$ donde:
\begin{itemize}
  \item $(B,\wedge,\vee)$ es un retículo distributivo con elemento mínimo $0$ y máximo $1$;
  \item $\neg:B\to B$ es una operación unaria llamada \emph{complemento} que satisface, para todo $a\in B$,
  \begin{equation}\label{eq:boolean-complemento}
  a\wedge \neg a = 0,\qquad a\vee \neg a = 1.
  \end{equation}
\end{itemize}
\end{definition}

\begin{remark}\label{rem:boolean-complementado}
Toda álgebra de Boole es un retículo distributivo \emph{complementado}: para cada $a\in B$ existe un complemento $\neg a$ que satisface \eqref{eq:boolean-complemento}. Más aún, este complemento es \emph{único}: si $b\in B$ satisface $a\wedge b = 0$ y $a\vee b = 1$, entonces $b = \neg a$. Esto se deduce de las leyes distributivas y de las propiedades del complemento. En particular, el paso de retículo distributivo a álgebra de Boole no es meramente añadir una operación unaria: se requiere que todo elemento tenga un complemento único.
\end{remark}

\begin{example}\label{ex:boolean-examples}
\leavevmode
\begin{enumerate}[label=\alph*)]
  \item Para todo conjunto $X$, la potencia $\mathcal{P}(X)$ con
  \[
  A\wedge B = A\cap B,\quad A\vee B = A\cup B,\quad \neg A = X\setminus A,\quad 0=\varnothing,\quad 1=X,
  \]
  es una álgebra de Boole.
  \item Sea $R$ un anillo conmutativo unitario. El conjunto de idempotentes
  \[
  E(R) := \{e\in R : e^2=e\}
  \]
  es una álgebra de Boole con
  \begin{equation}\label{eq:boolean-idempotentes}
    e\wedge f := ef,\quad e\vee f := e+f-ef,\quad \neg e := 1-e.
  \end{equation}
  Esta álgebra codifica la desconexión de $\Spec(R)$: descomposiciones de $1$ en sumas ortogonales de idempotentes corresponden a descomposiciones de $\Spec(R)$ en componentes abiertas--cerradas.
\end{enumerate}
\end{example}

\begin{definition}\label{def:atomos}
Sea $B$ un álgebra de Boole. Un elemento $a\in B$ es un \textit{átomo} si $a\neq 0$ y siempre que $0\leq b\leq a$ se tiene $b=0$ o $b=a$.
\end{definition}

\begin{exercise}\label{ex:boolean-atoms}
Prueba que si $B$ es una álgebra de Boole finita, entonces es isomorfa a $\mathcal{P}(S)$, donde $S$ es el conjunto de átomos de $B$. Sugerencia: usa que todo elemento se escribe como unión finita de átomos distintos.
\end{exercise}

\begin{remark}\label{rem:granulos}
En una álgebra de Boole finita $B$, los átomos juegan el papel de "granos elementales" de información: si $S$ denota el conjunto de átomos, el isomorfismo $B \cong \mathcal{P}(S)$ identifica cada elemento de $B$ con un subconjunto de $S$. 

Dado un subconjunto finito $\mathcal{A}\subseteq B$, la subálgebra booleana finita generada por sus elementos tiene un conjunto finito de átomos, que llamaremos los \emph{gránulos asociados a $\mathcal{A}$}. Estos gránulos son, en la práctica, los fragmentos discretos que aparecen al observar el espacio de Stone $X_B$ con la resolución finita impuesta por $\mathcal{A}$: cada átomo corresponde a un clopen minimal de $X_B$ en esa escala, una especie de grano de arena lógico sobre el que se apoyan las aproximaciones finitas.
\end{remark}

\section{Filtros, ultrafiltros y puntos lógicos}\label{sec:filtros-ultrafiltros}

\subsection{Filtros y ultrafiltros}\label{subsec:filtros}

\begin{definition}\label{def:filtro}
Sea $B$ un álgebra de Boole. Un \textit{filtro} en $B$ es un subconjunto $F\subseteq B$ tal que:
\begin{enumerate}[label=\roman*)]
  \item $1\in F$ y $0\notin F$;
  \item si $a,b\in F$, entonces $a\wedge b\in F$;
  \item si $a\in F$ y $a\leq b$, entonces $b\in F$.
\end{enumerate}
\end{definition}

\begin{definition}\label{def:ultrafiltro}
Un \textit{ultrafiltro} en $B$ es un filtro propio $U\subsetneq B$ que es maximal con respecto a la inclusión.
\end{definition}

\begin{lemma}[Caracterización de ultrafiltros]\label{lem:caracterizacion-ultrafiltro}
Sea $U$ un filtro propio en una álgebra de Boole $B$. Entonces $U$ es un ultrafiltro si y sólo si para todo $a\in B$ se tiene
\begin{equation}\label{eq:ultrafiltro-decide}
\textit{exactamente uno de } a,\neg a \textit{ pertenece a }U.
\end{equation}
\end{lemma}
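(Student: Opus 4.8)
The plan is to prove both implications of the equivalence, treating the complement operation as the central tool throughout.

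For the forward direction, suppose $U$ is an ultrafiltro. First I would establish that at most one of $a,\neg a$ can lie in $U$: if both belonged to $U$, then by property (ii) of a filtro we would have $a\wedge\neg a\in U$, but $a\wedge\neg a=0$ by \eqref{eq:boolean-complemento}, contradicting $0\notin U$. The substantive part is showing that at least one of them belongs to $U$. Here I would argue by maximality. Suppose $a\notin U$; the goal is to show $\neg a\in U$. I would consider the set $F'=\{b\in B : a\vee b'\in U \text{ for some } b'\leq b\}$, or more cleanly, the filtro generated by $U\cup\{\neg a\}$, and show it is still proper. Concretely, the filtro generated by $U\cup\{\neg a\}$ consists of elements $\geq u\wedge\neg a$ for $u\in U$; this fails to be proper only if $u\wedge\neg a=0$ for some $u\in U$, which forces $u\leq a$ and hence $a\in U$ (by upward closure, property (iii)), contradicting our assumption. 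Thus the generated filtro is proper and strictly larger than $U$ unless $\neg a$ was already in $U$; by maximality it equals $U$, so $\neg a\in U$.

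For the reverse direction, assume the decision property \eqref{eq:ultrafiltro-decide} holds for every $a\in B$. I would suppose toward a contradiction that $U$ is not maximal, so there exists a filtro propio $V$ with $U\subsetneq V$. Pick $a\in V\setminus U$. Since $a\notin U$, the decision property forces $\neg a\in U\subseteq V$. But then both $a\in V$ and $\neg a\in V$, so $a\wedge\neg a=0\in V$, contradicting that $V$ is a filtro. Hence $U$ is maximal, i.e.\ an ultrafiltro.

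The main obstacle is the forward direction, specifically verifying that adjoining $\neg a$ to $U$ yields a proper filtro. The crux is the implication $u\wedge\neg a=0\Rightarrow u\leq a$, which I would justify in a Boolean algebra using complementation: from $u\wedge\neg a=0$ one obtains $u=u\wedge 1=u\wedge(a\vee\neg a)=(u\wedge a)\vee(u\wedge\neg a)=u\wedge a$, and then $u\wedge a=u$ gives $u\leq a$ by \eqref{eq:orden-por-meet}. Once this computation is in place, the maximality argument closes cleanly.
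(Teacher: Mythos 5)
Your proof is correct and follows essentially the same route as the paper: your reverse direction is the same maximality argument (pick $a\in F\setminus U$, deduce $\neg a\in U\subseteq F$, hence $0\in F$), and your forward direction is the standard one. If anything, your version is more complete, since the paper dispatches the forward implication with ``ya vimos esta dirección antes'' while you actually verify that the filter generated by $U\cup\{\neg a\}$ is proper via the computation $u\wedge\neg a=0\Rightarrow u=u\wedge(a\vee\neg a)=u\wedge a\Rightarrow u\leq a$, which correctly uses distributivity and Lema~\ref{lem:orden-por-meet}.
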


\begin{proof}
$(\Rightarrow)$ Si $U$ es ultrafiltro (filtro propio maximal), entonces por la maximalidad no pueden faltar ambos $a,\neg a$ en $U$ (de lo contrario, podríamos extender $U$ agregando el que falta). Por otra parte, la propiedad de filtro impide que ambos estén en $U$, pues si $a,\neg a\in U$, entonces $0=a\wedge \neg a\in U$, contradicción. Por tanto, exactamente uno de $a,\neg a$ pertenece a $U$.

$(\Leftarrow)$ Si $U$ satisface \eqref{eq:ultrafiltro-decide} y $F$ es un filtro propio con $U\subseteq F\subseteq B$, toma $a\in F\setminus U$. Por \eqref{eq:ultrafiltro-decide}, $\neg a\in U\subseteq F$, y entonces $0=a\wedge \neg a\in F$, contradicción. Por tanto $F=U$.
\end{proof}

\begin{remark}\label{rem:Zorn-ultrafiltro}
Usando el lema de Zorn y el axioma de elección se prueba que todo filtro propio se puede extender a un ultrafiltro. Este hecho es equivalente a formas débiles del lema de ultrafiltros.
\end{remark}

\subsection{Ultrafiltros y homomorfismos a \texorpdfstring{$2$}{2}}\label{subsec:ultrafiltros-hom}

Sea $2=\{0,1\}$ la álgebra de Boole de dos elementos.

\begin{proposition}\label{prop:ultrafiltros-hom-2}
Para una álgebra de Boole $B$, existe una biyección natural
\begin{equation}\label{eq:ultrafiltros-hom-2}
\{\textit{ultrafiltros en }B\}\;\cong\;\Hom_{\Bool}(B,2).
\end{equation}
Más aún, esta biyección es \emph{natural} en el sentido categórico: define una equivalencia de categorías entre la categoría de ultrafiltros (con morfismos inducidos por homomorfismos de álgebras de Boole) y la categoría de homomorfismos $B\to 2$.
\end{proposition}

\begin{proof}
Dado un ultrafiltro $U$, definimos $v_U:B\to 2$ por
\[
v_U(a)=
\begin{cases}
1,&\textit{si } a\in U,\\
0,&\textit{si } a\notin U.
\end{cases}
\]
El Lema~\ref{lem:caracterizacion-ultrafiltro} y las propiedades de filtro garantizan que $v_U$ respeta $\wedge,\vee,\neg,0,1$, es decir, es homomorfismo de álgebras de Boole.

Recíprocamente, un homomorfismo $v:B\to 2$ da un ultrafiltro
\[
U_v := v^{-1}(\{1\}).
\]
Se verifica directamente que $U_v$ es filtro y que satisface \eqref{eq:ultrafiltro-decide}, por lo que es ultrafiltro. Las aplicaciones $U\mapsto v_U$ y $v\mapsto U_v$ son inversas una de otra.

La naturalidad se verifica observando que si $\varphi:B\to B'$ es un homomorfismo de álgebras de Boole y $U'$ es un ultrafiltro en $B'$, entonces $\varphi^{-1}(U')$ es un ultrafiltro en $B$, y esta construcción es compatible con la correspondencia entre ultrafiltros y homomorfismos a $2$.
\end{proof}

\section{El espacio de Stone y el teorema de representación}\label{sec:stone-space}

\subsection{Construcción del espacio de Stone}\label{subsec:stone-construccion}

\begin{definition}\label{def:espacio-Stone-B}
Sea $B$ un álgebra de Boole. El \textit{espacio de Stone} de $B$ (también llamado la piedra asociada a $B$), es el conjunto
\begin{equation}\label{eq:def-XB}
X_B := \{U\subseteq B : U \textit{ es un ultrafiltro}\}.
\end{equation}
Para cada $a\in B$ definimos el subconjunto básico
\begin{equation}\label{eq:def-a-gorro}
\widehat{a} := \{U\in X_B : a\in U\}\subseteq X_B,
\end{equation}
al que llamaremos la arenilla asociada a $a$.
La familia $\{\widehat{a}\}_{a\in B}$ genera una topología en $X_B$; sus elementos son clopens y constituyen fragmentos elementales de la estructura lógica de la piedra $X_B$.
\end{definition}

\begin{lemma}\label{lem:identidades-hat}
Para todo $a,b\in B$ se tiene
\begin{align}
\widehat{0} &= \varnothing,\label{eq:hat-0}\\
\widehat{1} &= X_B,\label{eq:hat-1}\\
\widehat{a\wedge b} &= \widehat{a}\cap \widehat{b},\label{eq:hat-meet}\\
\widehat{a\vee b} &= \widehat{a}\cup \widehat{b},\label{eq:hat-join}\\
\widehat{\neg a} &= X_B\setminus \widehat{a}.\label{eq:hat-complement}
\end{align}
\end{lemma}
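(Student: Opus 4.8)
The plan is to prove the five identities by unwinding the definition of $\widehat{\,\cdot\,}$ together with the ultrafilter characterization from Lemma~\ref{lem:caracterizacion-ultrafiltro}. Each identity reduces to a membership statement: for an arbitrary ultrafilter $U\in X_B$, I must decide whether the relevant element of $B$ belongs to $U$, and then translate that back into the set-theoretic operation on the right-hand side. So the overall strategy is entirely local, ultrafilter by ultrafilter.

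\begin{proof}
Cada identidad se verifica comprobando, para un ultrafiltro arbitrario $U\in X_B$, la pertenencia del elemento correspondiente.

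Para \eqref{eq:hat-0}: como todo filtro cumple $0\notin U$, ningún ultrafiltro satisface $0\in U$, luego $\widehat{0}=\varnothing$. Para \eqref{eq:hat-1}: como $1\in U$ para todo filtro, todo ultrafiltro pertenece a $\widehat{1}$, de modo que $\widehat{1}=X_B$.

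Para \eqref{eq:hat-meet}: si $a\wedge b\in U$, entonces de $a\wedge b\leq a$ y $a\wedge b\leq b$ y la propiedad iii) de filtro (clausura hacia arriba) se sigue $a\in U$ y $b\in U$, esto es, $U\in\widehat a\cap\widehat b$. Recíprocamente, si $a,b\in U$, la propiedad ii) (clausura bajo $\wedge$) da $a\wedge b\in U$. Por tanto $\widehat{a\wedge b}=\widehat a\cap\widehat b$.

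Para \eqref{eq:hat-complement}: por el Lema~\ref{lem:caracterizacion-ultrafiltro}, exactamente uno de $a,\neg a$ pertenece a $U$; luego $\neg a\in U$ si y sólo si $a\notin U$, es decir, $U\in\widehat{\neg a}$ si y sólo si $U\notin\widehat a$. Esto prueba $\widehat{\neg a}=X_B\setminus\widehat a$.

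Finalmente \eqref{eq:hat-join} se obtiene combinando las anteriores mediante las leyes de De Morgan. Escribiendo $a\vee b=\neg(\neg a\wedge\neg b)$ y aplicando \eqref{eq:hat-complement} y \eqref{eq:hat-meet},
\[
\widehat{a\vee b}=X_B\setminus\widehat{\neg a\wedge\neg b}=X_B\setminus\bigl((X_B\setminus\widehat a)\cap(X_B\setminus\widehat b)\bigr)=\widehat a\cup\widehat b,
\]
donde el último paso es la ley de De Morgan conjuntista. Esto completa las cinco identidades.
\end{proof}

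The only identity requiring genuine care is \eqref{eq:hat-complement}, since it is the one that truly uses the ultrafilter hypothesis rather than the mere filter axioms; this is where Lemma~\ref{lem:caracterizacion-ultrafiltro} does the essential work. I would expect this to be the main (if modest) obstacle: the join identity \eqref{eq:hat-join} could alternatively be proved directly by appealing to maximality (if $a\vee b\in U$ but neither $a$ nor $b$ lies in $U$, then $\neg a,\neg b\in U$, whence $\neg a\wedge\neg b=\neg(a\vee b)\in U$, contradicting that $U$ is a filter), but deriving it from De Morgan keeps the argument maximally economical and highlights that \eqref{eq:hat-complement} is the sole step beyond the filter axioms.
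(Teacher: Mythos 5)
Your proof is correct and follows essentially the same route as the paper: the first three identities from the filter axioms and \eqref{eq:hat-complement} from la caracterización \eqref{eq:ultrafiltro-decide}, exactly as allí. La única variación es \eqref{eq:hat-join}, que tú deduces formalmente de \eqref{eq:hat-complement} y \eqref{eq:hat-meet} vía De Morgan mientras el artículo argumenta directamente por contradicción (si $a,b\notin U$ entonces $\neg a\wedge\neg b=\neg(a\vee b)\in U$ y $0\in U$); ambas versiones descansan en la misma identidad booleana $\neg(a\vee b)=\neg a\wedge\neg b$, así que la diferencia es puramente estilística.
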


\begin{proof}
Las igualdades \eqref{eq:hat-0} y \eqref{eq:hat-1} son inmediatas. La igualdad \eqref{eq:hat-meet} se deduce de que $U$ es filtro si y sólo si $a,b\in U$ implica $a\wedge b\in U$, y recíprocamente. La igualdad \eqref{eq:hat-join} se sigue del hecho ya usado varias veces: $a\vee b\in U$ si y sólo si $a\in U$ o $b\in U$ (pues en caso contrario $\neg a,\neg b\in U$ y $\neg(a\vee b)=\neg a\wedge \neg b\in U$ produciría $0\in U$). La igualdad \eqref{eq:hat-complement} se obtiene de \eqref{eq:ultrafiltro-decide}.
\end{proof}

En particular, cada $\widehat{a}$ es un conjunto \emph{clopen}. Definimos
\begin{equation}\label{eq:eta-B-def}
\eta_B:B\to \Clop(X_B),\qquad \eta_B(a) := \widehat{a}.
\end{equation}
Llamaremos a este morfismo el \textit{operador arenilla} del álgebra de Boole $B$.

\subsection{Propiedades topológicas}\label{subsec:stone-prop}

\begin{theorem}\label{thm:XB-Stone}
Para toda álgebra de Boole $B$, el espacio $X_B$ es compacto, Hausdorff y totalmente desconectado.
\end{theorem}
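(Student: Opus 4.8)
The plan is to prove the three properties separately, using the Boolean identities of Lemma~\ref{lem:identidades-hat} as the main bridge between the algebra of $B$ and the topology of $X_B$. Note first that the family $\{\widehat{a}\}_{a\in B}$ is not merely a subbasis but already a basis for the topology: by \eqref{eq:hat-meet}, finite intersections of basic sets are again basic, so $\{\widehat{a}\}$ is closed under finite intersection and hence forms a basis. Moreover \eqref{eq:hat-complement} shows each $\widehat{a}$ is clopen, since its complement $X_B\setminus\widehat{a}=\widehat{\neg a}$ is also open.

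First I would prove \emph{Hausdorff}. Given two distinct ultrafiltros $U\neq V$, there is some $a\in B$ lying in exactly one of them, say $a\in U$, $a\notin V$. By \eqref{eq:ultrafiltro-decide} applied to $V$ we get $\neg a\in V$, so $U\in\widehat{a}$ and $V\in\widehat{\neg a}=X_B\setminus\widehat{a}$. Since $\widehat{a}$ and its complement are disjoint open sets separating $U$ and $V$, the space is Hausdorff (indeed the separating sets are clopen).

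Next I would prove \emph{total disconnectedness}. I would argue that the connected component of any point is a singleton. Given $U\neq V$, the same clopen $\widehat{a}$ from the previous step separates them into $\widehat{a}$ and $X_B\setminus\widehat{a}$; since any connected subset must lie entirely on one side of a clopen partition, no connected set can contain both $U$ and $V$. Hence connected components are singletons, which is exactly total disconnectedness. (Equivalently, one notes that $X_B$ has a basis of clopen sets and is Hausdorff, so it is zero-dimensional and therefore totally disconnected.)

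The main obstacle is \emph{compactness}, so I would devote the most care there. The plan is to verify the finite intersection property (FIP) criterion: $X_B$ is compact iff every family of closed sets with the FIP has nonempty intersection. Because the $\widehat{a}$ form a basis of clopens, it suffices to treat closed covers by basic sets, or dually families $\{\widehat{a_i}\}_{i\in I}$ with the FIP. The key observation is that, by \eqref{eq:hat-0} and \eqref{eq:hat-meet}, a finite intersection $\widehat{a_{i_1}}\cap\cdots\cap\widehat{a_{i_n}}=\widehat{a_{i_1}\wedge\cdots\wedge a_{i_n}}$ is nonempty precisely when $a_{i_1}\wedge\cdots\wedge a_{i_n}\neq 0$. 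Thus the FIP for such a family translates into the statement that the set $\{a_i\}_{i\in I}$ generates a proper filter, i.e. has no finite subfamily meeting to $0$. By Remark~\ref{rem:Zorn-ultrafiltro}, this proper filter extends to an ultrafiltro $U$, which then lies in every $\widehat{a_i}$, so $\bigcap_i\widehat{a_i}\neq\varnothing$. This gives compactness via the FIP criterion restricted to basic closed sets. The delicate point is to reduce an arbitrary family of closed sets to this basic case; I would handle it by taking an arbitrary open cover, refining it to a cover by basic clopens $\widehat{a_i}$, and passing to complements $\widehat{\neg a_i}$ to obtain a family of closed sets with empty intersection, whose FIP must therefore fail, yielding a finite subcover. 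This invocation of the ultrafilter lemma (and hence the axiom of choice) is exactly where the argument draws on Remark~\ref{rem:Zorn-ultrafiltro}.
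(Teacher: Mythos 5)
Your proof is correct, but it takes a genuinely different route from the paper's. The paper handles all three properties at once by embedding $X_B$ into the product $2^B=\prod_{a\in B}\{0,1\}$ via the identification of ultrafiltros with homomorphisms $B\to 2$ (Proposición~\ref{prop:ultrafiltros-hom-2}), observing that the homomorphism conditions cut out a closed subspace, and then invoking Tychonoff: compactness, Hausdorffidad y desconexión total se heredan del producto. You instead argue intrinsically: Hausdorff and total disconnectedness come directly from the separating clopens $\widehat{a}$, $\widehat{\neg a}$, and compactness is proved by the finite-intersection-property criterion, translating FIP for a family $\{\widehat{a_i}\}$ into properness of the generated filter and then applying the ultrafilter extension lemma (Observación~\ref{rem:Zorn-ultrafiltro}); your reduction of arbitrary covers to basic clopen covers is the standard Alexander-subbase-style step and is carried out correctly. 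The trade-off: the paper's argument is shorter and delegates the set-theoretic work to Tychonoff, while yours is more self-contained, makes explicit exactly where the ultrafilter lemma enters, and exhibits directly how the Boolean structure of $B$ controls the topology of $X_B$ --- which is arguably closer in spirit to the representation theorem that follows. Note that the two arguments consume comparable amounts of choice, since Tychonoff for products of finite discrete spaces is itself equivalent to the ultrafilter lemma. One small point worth making explicit in your write-up: the equivalence ``$\widehat{a_{i_1}\wedge\cdots\wedge a_{i_n}}\neq\varnothing$ si y sólo si $a_{i_1}\wedge\cdots\wedge a_{i_n}\neq 0$'' only needs the easy direction (vía \eqref{eq:hat-0}) for your compactness argument, so no circularity arises.
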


\begin{proof}
Consideremos el producto
\[
2^B := \prod_{a\in B} 2_a,
\]
con cada $2_a=\{0,1\}$ discreto, y topología producto. Por Tychonoff, $2^B$ es compacto Hausdorff y totalmente desconectado.

La Proposición~\ref{prop:ultrafiltros-hom-2} identifica $X_B$ con el subconjunto $H\subseteq 2^B$ de homomorfismos $v:B\to 2$. Las condiciones
\[
v(0)=0,\ v(1)=1,\ v(a\wedge b)=v(a)\wedge v(b),\ \dots
\]
definen ecuaciones finitas en coordenadas, de modo que $H$ es intersección de subespacios cerrados de $2^B$, y por tanto es cerrado. Como subespacio cerrado de un compacto Hausdorff totalmente desconectado, $H\simeq X_B$ hereda esas propiedades.

La propiedad de Hausdorff y la total desconexión también se pueden verificar directamente con los clopens $\widehat{a}$.
\end{proof}

\subsection{Teorema de representación}\label{subsec:stone-repr}

\begin{theorem}[Stone, 1936]\label{thm:stone-representation}
El morfismo $\eta_B:B\to \Clop(X_B)$ dado por $a\mapsto \widehat{a}$ es un isomorfismo de álgebras de Boole.
\end{theorem}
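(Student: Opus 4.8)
The plan is to show that the Boolean algebra homomorphism $\eta_B$ is bijective, since Lemma~\ref{lem:identidades-hat} already establishes that it respects all the Boolean operations. The proof therefore splits into two parts: injectivity (equivalently, that $\eta_B$ has trivial kernel in the sense appropriate to Boolean algebras) and surjectivity onto $\Clop(X_B)$.

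First I would prove \emph{injectivity}. Since $\eta_B$ is a homomorphism, it suffices to show that $\widehat{a}=\widehat{b}$ forces $a=b$; by the complement identity \eqref{eq:hat-complement} and the meet identity \eqref{eq:hat-meet} this reduces to showing that $\widehat{a}=\varnothing$ implies $a=0$. Equivalently, I would argue the contrapositive: if $a\neq 0$, then $\widehat{a}\neq\varnothing$, i.e.\ there exists an ultrafiltro $U$ con $a\in U$. This is exactly where the existence of ultrafiltros enters: the set $\{b\in B : a\leq b\}$ is a filtro propio (proper because $a\neq 0$ guarantees $0\notin\{b : a\leq b\}$), and by the ultrafilter extension principle recalled in Remark~\ref{rem:Zorn-ultrafiltro} it extends to an ultrafiltro $U$ containing $a$. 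Hence $U\in\widehat{a}$ and $\widehat{a}\neq\varnothing$.

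Next I would prove \emph{surjectivity}, which is the main obstacle. I must show every clopen subset of $X_B$ is of the form $\widehat{a}$ for some $a\in B$. The family $\{\widehat{a}\}_{a\in B}$ is a basis of clopens that is closed under finite unions and intersections by Lemma~\ref{lem:identidades-hat}, so every open set is a union of basic opens $\widehat{a}$. Given a clopen $C\subseteq X_B$, write $C=\bigcup_{i\in I}\widehat{a_i}$ as a union of basic opens. The key point is \textbf{compacidad}: since $C$ is closed in the compact space $X_B$ (Teorema~\ref{thm:XB-Stone}), $C$ is itself compact, so the open cover $\{\widehat{a_i}\}_{i\in I}$ of $C$ admits a finite subcover $\widehat{a_{i_1}},\dots,\widehat{a_{i_n}}$. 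Then
\begin{equation*}
C=\widehat{a_{i_1}}\cup\cdots\cup\widehat{a_{i_n}}=\widehat{\,a_{i_1}\vee\cdots\vee a_{i_n}\,},
\end{equation*}
using the join identity \eqref{eq:hat-join} repeatedly, so $C=\widehat{a}$ with $a=a_{i_1}\vee\cdots\vee a_{i_n}\in B$.

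Combining the two parts, $\eta_B$ is a bijective homomorphism of álgebras de Boole and hence an isomorphism, which completes the proof. The conceptual heart is the interplay between the two halves: the ultrafilter extension principle supplies \emph{enough points} to separate distinct algebra elements (injectivity), while compactness guarantees that the topology is \emph{coarse enough} that no clopen escapes the basic family (surjectivity). I expect the surjectivity step to require the most care, precisely because it is where the topological hypothesis — compactness, itself a consequence of Tychonoff via Teorema~\ref{thm:XB-Stone} — does the essential work of taming the potentially infinite unions down to finite joins in $B$.
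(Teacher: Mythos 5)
Tu propuesta es correcta y sigue esencialmente el mismo camino que la prueba del artículo: la sobreyectividad es idéntica (compacidad del clopen $C$, subrecubrimiento finito y la identidad \eqref{eq:hat-join}), y la inyectividad descansa en el mismo hecho clave —extender el filtro principal de un elemento no nulo a un ultrafiltro—, sólo que tú lo formulas como ``núcleo trivial'' ($\widehat{a}=\varnothing\Rightarrow a=0$) mientras el artículo lo aplica directamente a $c=a\wedge\neg b$. Ambas redacciones son equivalentes y completas.
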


\begin{proof}
Ya vimos que $\eta_B$ es homomorfismo. Falta probar inyectividad y sobreyectividad.

\smallskip

\emph{Inyectividad.}  
Supongamos que $\widehat{a}=\widehat{b}$ y $a\not\leq b$. Entonces $c:=a\wedge \neg b\neq 0$. El filtro generado por $c$ es propio, y se puede extender a un ultrafiltro $U$ con $c\in U$. Como $c\leq a$ y $U$ es filtro, se tiene $a\in U$. Además, como $c\leq \neg b$, tenemos $c\wedge b = a\wedge \neg b \wedge b = 0$. Si $b\in U$, entonces $c\wedge b = 0\in U$, contradicción. Por tanto $b\notin U$. Así, $a\in U$ y $b\notin U$, lo que implica $U\in \widehat{a}\setminus \widehat{b}$, contradicción con $\widehat{a}=\widehat{b}$. Por tanto $a\leq b$. Un argumento simétrico da $b\leq a$, así que $a=b$.

\smallskip

\emph{Sobreyectividad.}  
Sea $C\subseteq X_B$ un clopen. La familia $\{\widehat{a}\}_{a\in B}$ es una base de abiertos, así que
\[
C = \bigcup_{a\in S} \widehat{a}
\]
para algún $S\subseteq B$. Como $C$ es compacto (cerrado en $X_B$), existe un subrecubrimiento finito
\[
C=\widehat{a_1}\cup\cdots\cup\widehat{a_n}.
\]
Definimos $a:=a_1\vee\cdots\vee a_n$. Entonces, usando \eqref{eq:hat-join},
\[
\widehat{a}=\widehat{a_1\vee\cdots\vee a_n}
=\widehat{a_1}\cup\cdots\cup\widehat{a_n} = C.
\]
Por tanto, $C$ está en la imagen de $\eta_B$. Esto prueba la sobreyectividad.
\end{proof}

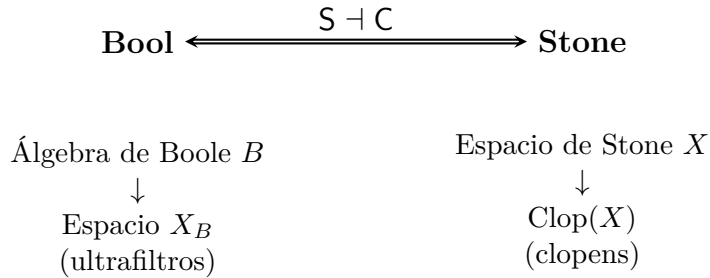
\begin{figure}[h]
  \centering
  \begin{tikzpicture}[>=stealth, node distance=4.5cm, auto]
    \node (B) {$\Bool$};
    \node (Stone) [right=of B] {$\Stone$};
    
    \draw[<->,double,thick] (B) to node[above] {$\mathsf{S} \dashv \mathsf{C}$} (Stone);
    
    \node[below=0.8cm of B] {
      \begin{minipage}{3.5cm}
        \centering
        \small
        Álgebra de Boole $B$\\
        $\downarrow$\\
        Espacio $X_B$\\
        (ultrafiltros)
      \end{minipage}
    };
    
    \node[below=0.8cm of Stone] {
      \begin{minipage}{3.5cm}
        \centering
        \small
        Espacio de Stone $X$\\
        $\downarrow$\\
        $\Clop(X)$\\
        (clopens)
      \end{minipage}
    };
  \end{tikzpicture}
  \caption{La dualidad de Stone establece una anti-equivalencia entre la categoría de álgebras de Boole y la categoría de espacios de Stone. El funtor $\mathsf{S}$ asocia a cada álgebra de Boole $B$ su espacio de Stone $X_B$ (el conjunto de ultrafiltros con topología de clopens), mientras que $\mathsf{C}$ recupera el álgebra de Boole de clopens de un espacio de Stone.}
  \label{fig:stone-duality}
\end{figure}

\begin{figure}[h]
  \centering
  \begin{tikzpicture}[>=stealth, node distance=4.5cm, auto]
    \node (B) {$B$};
    \node (XB) [right=of B] {$X_B$};
    
    \draw[->] (B) -- node[above]{\small $a \mapsto \widehat{a}$} (XB);

    \node[below=0.8cm of B, align=left] (lefttext) {
      $\bullet$ $a \wedge b$ \\[0.05cm]
      $\bullet$ $a \vee b$ \\[0.05cm]
      $\bullet$ $\neg a$
    };
    \node[below=0.8cm of XB, align=left] (righttext) {
      $\bullet$ $\widehat{a}\cap \widehat{b}$ \\[0.05cm]
      $\bullet$ $\widehat{a}\cup \widehat{b}$ \\[0.05cm]
      $\bullet$ $X_B\setminus \widehat{a}$
    };

    \draw[->] (lefttext.east) -- (righttext.west);
  \end{tikzpicture}
  \caption{Diccionario visual de Stone: las operaciones booleanas en $B$ corresponden a operaciones con clopens en $X_B$.}
  \label{fig:stone-dict}
\end{figure}
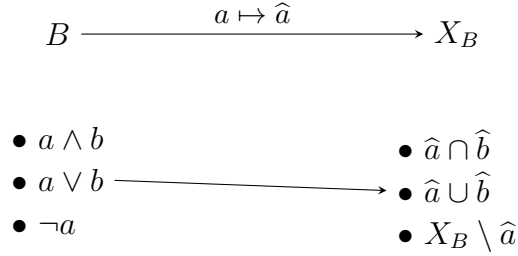

\section{Recuperar un espacio de Stone desde sus clopens}\label{sec:recuperar-espacio}

Sea ahora $X$ un espacio compacto, Hausdorff y totalmente desconectado.

\begin{definition}\label{def:U-x}
Denotamos por $\Clop(X)$ al conjunto de clopens de $X$. Para cada $x\in X$ definimos
\begin{equation}\label{eq:def-Ux}
U_x := \{C\in \Clop(X) : x\in C\}.
\end{equation}
\end{definition}

\begin{proposition}\label{prop:Ux-ultrafiltro}
Para cada $x\in X$, el subconjunto $U_x\subseteq \Clop(X)$ es un ultrafiltro.
\end{proposition}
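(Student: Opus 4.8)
The plan is to verify directly that $U_x$ satisfies the three axioms of Definición~\ref{def:filtro}, confirm that it is proper, and then upgrade ``filtro propio'' to ``ultrafiltro'' by invoking the characterization in Lema~\ref{lem:caracterizacion-ultrafiltro}. First I would record that the ambient object is genuinely an álgebra de Boole: $\Clop(X)$ is closed under finite intersections and unions, its complement operation is $C\mapsto X\setminus C$ (the complement of a clopen is again clopen), with minimum $0=\varnothing$ and maximum $1=X$. This is the Boolean algebra in which $U_x$ lives, so that speaking of a filtro or ultrafiltro makes sense.

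Next I would check the filtro axioms one by one. For axiom (i): $X\in U_x$ because $x\in X$, and $\varnothing\notin U_x$ because $x\notin\varnothing$, so $U_x$ is a proper filter candidate. For axiom (ii): if $C,D\in U_x$, then $x\in C$ and $x\in D$, hence $x\in C\cap D$; since $C\cap D$ is clopen, $C\cap D\in U_x$. For axiom (iii): if $C\in U_x$ and $C\subseteq D$ with $D$ clopen, then $x\in C\subseteq D$ forces $D\in U_x$. These three verifications establish that $U_x$ is a filtro propio.

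Finally, to promote $U_x$ to an ultrafiltro I would apply Lema~\ref{lem:caracterizacion-ultrafiltro}. Given any $C\in\Clop(X)$, the point $x$ belongs to exactly one of the two complementary clopens $C$ and $X\setminus C$: it lies in at least one because $C\cup(X\setminus C)=X$, and in at most one because $C\cap(X\setminus C)=\varnothing$. Hence exactly one of $C,\neg C$ belongs to $U_x$, which is precisely condition \eqref{eq:ultrafiltro-decide}, and the lemma then guarantees that $U_x$ is maximal, i.e.\ an ultrafiltro.

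I expect no real obstacle here: once one observes that complements of clopens are clopen, every step reduces to a tautology about membership of the single point $x$. The only conceptual remark worth stressing is that this proposition does \emph{not} use compactness at all; the hypotheses that $X$ be compacto, Hausdorff y totalmente desconectado will only become indispensable in the subsequent results (for instance when proving that $x\mapsto U_x$ is a biyección onto $X_{\Clop(X)}$ and recovers the topology), so here they merely situate us in the correct categoría de espacios.
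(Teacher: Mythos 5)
Tu propuesta es correcta y sigue esencialmente el mismo camino que la prueba del artículo: verificar los axiomas de filtro punto por punto y luego observar que, para cada clopen $C$, exactamente uno de $C$ y $X\setminus C$ contiene a $x$, lo que por el Lema~\ref{lem:caracterizacion-ultrafiltro} da la maximalidad. Tus observaciones adicionales (que $\Clop(X)$ es un álgebra de Boole y que la compacidad no interviene aquí) son correctas y no alteran el argumento.
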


\begin{proof}
Es un filtro: $X\in U_x$ y $\varnothing\notin U_x$; si $C,D\in U_x$ entonces $x\in C\cap D$ y $C\cap D$ es clopen; si $C\in U_x$ y $C\subseteq D$ con $D$ clopen, entonces $x\in D$ y $D\in U_x$.

Para ver que es ultrafiltro, basta notar que para cada clopen $E$, exactamente uno de $E$ o $E^c$ contiene a $x$, y por tanto pertenece a $U_x$.
\end{proof}

\begin{definition}\label{def:Phi-X}
Definimos
\[
\Phi_X:X\to X_{\Clop(X)},\qquad x\mapsto U_x.
\]
\end{definition}

\begin{lemma}\label{lem:clopens-separan}
En un espacio de Stone $X$, los clopens separan puntos: si $x\neq y$, existe un clopen $C$ tal que $x\in C$ y $y\notin C$.
\end{lemma}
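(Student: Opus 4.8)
The plan is to reformulate the statement in terms of quasi-components and then exploit compactness together with normality. For a point $x\in X$, set $Q_x := \bigcap\{C\in \Clop(X) : x\in C\}$, the intersection of all clopens containing $x$. The lemma is equivalent to the assertion $Q_x = \{x\}$ for every $x$: if some $y\neq x$ survived this intersection, then every clopen containing $x$ would also contain $y$ and no separating clopen could exist; conversely, $Q_x=\{x\}$ says precisely that for each $y\neq x$ there is a clopen $C\ni x$ with $y\notin C$. Since $x\in Q_x$ holds trivially, the entire content lies in the inclusion $Q_x\subseteq\{x\}$.

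The strategy for that inclusion is to prove that $Q_x$ is \emph{conexo}. Once this is established, $Q_x$ is a connected subset containing $x$, hence contained in the connected component of $x$; and since $X$ is totally disconnected that component is $\{x\}$, so $Q_x=\{x\}$ and the lemma follows. Thus the whole proof reduces to a single topological fact about the intersection of clopens.

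Proving that $Q_x$ is connected is the main obstacle. Suppose instead $Q_x = A\sqcup B$ with $A,B$ nonempty, disjoint and closed in $X$, and $x\in A$. As $X$ is compact Hausdorff it is normal, so there exist disjoint open sets $U\supseteq A$ and $V\supseteq B$. Then $Q_x\subseteq U\cup V$, so the compact set $X\setminus(U\cup V)$ is disjoint from $\bigcap\{C : x\in C\}$; the open complements $X\setminus C$ therefore cover it, and compactness extracts finitely many clopens $C_1,\dots,C_n\ni x$ whose intersection $C:=C_1\cap\cdots\cap C_n$ satisfies $C\subseteq U\cup V$. The crux is that on $C$ membership in $U$ coincides with non-membership in $V$, so $C\cap U = C\cap(X\setminus V)$ is at once open and closed, i.e.\ a clopen containing $x$. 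Consequently $Q_x\subseteq C\cap U\subseteq U$, which forces the nonempty set $B\subseteq Q_x$ into $U$, contradicting $B\subseteq V$ together with $U\cap V=\varnothing$. I expect the delicate point to be exactly this verification that $C\cap U$ is clopen, where the disjointness of $U$ and $V$ and the containment $C\subseteq U\cup V$ must be used in combination.
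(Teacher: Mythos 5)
Your proof is correct and is precisely the detailed version of the argument the paper only sketches (the paper appeals to zero-dimensionality and defers to Johnstone): you show that in a compact Hausdorff space the quasi-component $Q_x=\bigcap\{C\in\Clop(X):x\in C\}$ is connected, hence by total disconnectedness equals the component $\{x\}$, which is exactly the separation statement. The delicate step you flag --- extracting by compactness a clopen $C\ni x$ with $C\subseteq U\cup V$ and observing that $C\cap U=C\cap(X\setminus V)$ is simultaneously open and closed --- is the classical argument and checks out completely.
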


\begin{proof}
$X$ es compacto, Hausdorff y totalmente desconectado. En particular es $0$-dimensional: posee una base de abiertos clopen. Partiendo de cualquier par de abiertos disjuntos que separan a $x,y$, se puede refinar para obtener un clopen que contenga a $x$ y excluya a $y$ (ver \cite{Johnstone1982} para una prueba detallada).
\end{proof}

\begin{theorem}\label{thm:X-isom-XClopX}
Sea $X$ un espacio de Stone. Entonces $\Phi_X:X\to X_{\Clop(X)}$ es un homeomorfismo.
\end{theorem}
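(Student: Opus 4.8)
El plan es mostrar que $\Phi_X$ es una biyección continua de un espacio compacto a uno Hausdorff, y luego invocar el hecho topológico estándar de que toda aplicación de ese tipo es automáticamente un homeomorfismo. Primero probaría la \emph{inyectividad}: si $x\neq y$, el Lema~\ref{lem:clopens-separan} proporciona un clopen $C$ con $x\in C$ y $y\notin C$, de modo que $C\in U_x$ pero $C\notin U_y$; por tanto $U_x\neq U_y$, es decir, $\Phi_X(x)\neq\Phi_X(y)$.

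En segundo lugar abordaría la \emph{sobreyectividad}, que es el corazón del argumento. Dado un ultrafiltro $V$ en $\Clop(X)$, considero la familia de clopens $\{C : C\in V\}$. Como $V$ es un filtro, esta familia es cerrada bajo intersecciones finitas y no contiene $\varnothing$, de modo que posee la propiedad de intersección finita. La compacidad de $X$ garantiza entonces que $\bigcap_{C\in V} C\neq\varnothing$; elijo un punto $x$ en esa intersección. Por construcción $V\subseteq U_x$, y como $V$ es maximal mientras que $U_x$ es un filtro propio (Proposición~\ref{prop:Ux-ultrafiltro}), la maximalidad de $V$ fuerza $V=U_x=\Phi_X(x)$.

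Tercero, verificaría la \emph{continuidad}. La topología de $X_{\Clop(X)}$ está generada por los clopens básicos $\widehat{C}$ con $C\in\Clop(X)$, y un cálculo directo de la preimagen da
\[
\Phi_X^{-1}(\widehat{C})=\{x\in X : C\in U_x\}=\{x\in X : x\in C\}=C,
\]
que es abierto en $X$ por ser clopen. Así, la preimagen de cada abierto básico es abierta y $\Phi_X$ resulta continua.

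Finalmente, como $X$ es compacto por hipótesis, $X_{\Clop(X)}$ es Hausdorff (Teorema~\ref{thm:XB-Stone}) y $\Phi_X$ es una biyección continua, el teorema clásico según el cual toda biyección continua de un espacio compacto a uno Hausdorff es un homeomorfismo cierra la demostración. El principal obstáculo es el paso de sobreyectividad: es ahí donde la compacidad interviene de manera esencial, vía la propiedad de intersección finita, para \emph{materializar} un ultrafiltro abstracto de clopens como un punto concreto de la piedra $X$.
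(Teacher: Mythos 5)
Tu propuesta es correcta y sigue esencialmente el mismo camino que la prueba del artículo: inyectividad vía el Lema~\ref{lem:clopens-separan}, sobreyectividad materializando el ultrafiltro como un punto de $\bigcap_{C\in V}C$ por compacidad y la propiedad de intersección finita, continuidad calculando $\Phi_X^{-1}(\widehat{C})=C$, y el cierre con el argumento compacto--Hausdorff. La única variación es menor: para concluir $V=U_x$ tú invocas directamente la maximalidad de $V$ como filtro propio, mientras que el texto verifica la inclusión $U_x\subseteq V$ usando la caracterización por complementos; ambas rutas son válidas e igualmente breves.
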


\begin{proof}
La continuidad se verifica sobre la base de abiertos de $X_{\Clop(X)}$. Para $C\in \Clop(X)$,
\[
\Phi_X^{-1}(\widehat{C})
= \{x\in X : C\in U_x\}
= \{x\in X : x\in C\}
= C,
\]
que es abierto. Por tanto $\Phi_X$ es continua.

La inyectividad se sigue del Lema~\ref{lem:clopens-separan}: si $x\neq y$, existe un clopen $C$ con $x\in C$, $y\notin C$, por lo que $C\in U_x$ y $C\notin U_y$, de modo que $U_x\neq U_y$.

Para la sobreyectividad, sea $U$ un ultrafiltro de clopens. La familia $\{C\in \Clop(X): C\in U\}$ tiene propiedad de intersección finita. Por compacidad, la intersección $\bigcap_{C\in U} C$ es no vacía. Sea $x$ un punto en esa intersección.

Entonces $C\in U$ implica $x\in C$, así que $U\subseteq U_x$. Por otro lado, si $D$ es clopen con $x\in D$, entonces $D^c$ es clopen y no contiene a $x$, así que $D^c\notin U$. Por ser ultrafiltro, debe cumplirse $D\in U$. Luego $U_x\subseteq U$, y así $U=U_x$.

Finalmente, $X$ es compacto y $X_{\Clop(X)}$ es Hausdorff; una biyección continua entre un compacto y un Hausdorff es un homeomorfismo.
\end{proof}

\section{Espacios profinitos y límites inversos}\label{sec:profinite}

Hasta ahora hemos desarrollado la teoría de espacios de Stone desde una
perspectiva puramente topológica y lógica. En esta sección cambiamos de enfoque:
los espacios profinitos son precisamente los espacios de Stone, pero vistos
desde una perspectiva aritmética: como límites inversos de sistemas finitos.
Esta perspectiva es natural porque muchos objetos aritméticos importantes
(enteros $p$-ádicos, grupos de Galois, completaciones profinitas) se construyen
precisamente como límites inversos de objetos finitos.

\subsection{Sistemas proyectivos y límites inversos}\label{subsec:limites-inversos}

\begin{definition}\label{def:sistema-proyectivo}
Sea $(I,\leq)$ un conjunto dirigido. Un \textit{sistema proyectivo} de espacios topológicos indexado por $I$ consiste en:
\begin{itemize}
  \item una familia de espacios $(X_i)_{i\in I}$;
  \item aplicaciones continuas $\phi_{ji}:X_j\to X_i$ para cada par $i\leq j$,
\end{itemize}
satisfaciendo
\[
\phi_{ii} = \mathrm{id}_{X_i},\qquad
\phi_{ki} = \phi_{ji}\circ \phi_{kj} \quad (i\leq j\leq k).
\]
\end{definition}

\begin{definition}[Límite inverso]\label{def:inverse-limit}
El \textit{límite inverso} $X := \varprojlim_{i\in I} X_i$ es el subespacio de $\prod_{i\in I} X_i$ formado por las sucesiones compatibles
\[
X = \left\{ (x_i)_{i\in I} \in \prod_{i\in I} X_i \,\middle|\, \phi_{ji}(x_j) = x_i\ \forall i\leq j\right\},
\]
con la topología subespacio.
\end{definition}

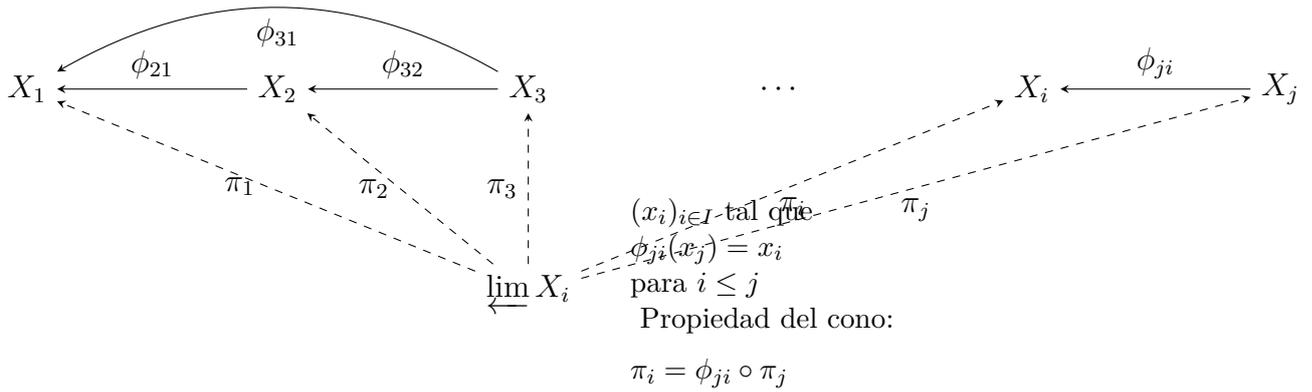
\begin{figure}[h]
  \centering
  \begin{tikzpicture}[>=stealth, node distance=2.5cm, auto]
    \node (X1) {$X_1$};
    \node (X2) [right=of X1] {$X_2$};
    \node (X3) [right=of X2] {$X_3$};
    \node (dots) [right=of X3] {$\cdots$};
    \node (Xi) [right=of dots] {$X_i$};
    \node (Xj) [right=of Xi] {$X_j$};
    \node (dots2) [right=of Xj] {$\cdots$};
    
    \node (Xlim) [below=2cm of X3] {$\varprojlim X_i$};
    
    \draw[->] (X2) to node[above] {$\phi_{21}$} (X1);
    \draw[->] (X3) to node[above] {$\phi_{32}$} (X2);
    \draw[->] (X3) to[bend right=30] node[below] {$\phi_{31}$} (X1);
    \draw[->] (Xj) to node[above] {$\phi_{ji}$} (Xi);
    
    \draw[->,dashed] (Xlim) to node[left] {$\pi_1$} (X1);
    \draw[->,dashed] (Xlim) to node[left] {$\pi_2$} (X2);
    \draw[->,dashed] (Xlim) to node[left] {$\pi_3$} (X3);
    \draw[->,dashed] (Xlim) to node[below] {$\pi_i$} (Xi);
    \draw[->,dashed] (Xlim) to node[below] {$\pi_j$} (Xj);
    
    \node[right=0.5cm of Xlim] {
      \begin{minipage}{4cm}
        \small
        $(x_i)_{i\in I}$ tal que\\
        $\phi_{ji}(x_j) = x_i$\\
        para $i \leq j$\\
        \vspace{0.2cm}
        Propiedad del cono:\\
        $\pi_i = \phi_{ji} \circ \pi_j$
      \end{minipage}
    };
  \end{tikzpicture}
  \caption{Esquema de un sistema proyectivo y su límite inverso. El límite inverso $\varprojlim X_i$ consiste en las sucesiones $(x_i)$ que son compatibles con los morfismos de transición $\phi_{ji}$. Las flechas punteadas $\pi_i$ desde el límite hacia los objetos $X_i$ son las proyecciones canónicas que satisfacen la propiedad del cono: $\pi_i = \phi_{ji} \circ \pi_j$ para $i \leq j$.}
  \label{fig:inverse-limit}
\end{figure}

\subsection{Definición de espacio profinito}\label{subsec:def-profinite}

\begin{definition}\label{def:profinite-space}
Un \textit{espacio profinito} es un espacio homeomorfo al límite inverso de un sistema proyectivo de conjuntos finitos discretos.
La categoría $\ProFin$ tiene como objetos los espacios profinitos y morfismos las aplicaciones continuas.
\end{definition}

\begin{remark}\label{rem:profinite-compacto}
Si cada $X_i$ es finito discreto, entonces es compacto Hausdorff y totalmente desconectado. El producto $\prod_i X_i$ y cualquier subespacio cerrado, en particular el límite inverso, comparten estas propiedades. Por tanto todo profinito es un espacio de Stone.
\end{remark}

\begin{example}[Los enteros $p$-ádicos]\label{ex:Zp-profinito}
Sea $p$ un primo. Consideremos los anillos finitos $\mathbb{Z}/p^n\mathbb{Z}$ con topología discreta y reducción
\[
\phi_{n+1,n}:\mathbb{Z}/p^{n+1}\mathbb{Z}\to \mathbb{Z}/p^n\mathbb{Z},\qquad x\bmod p^{n+1}\mapsto x\bmod p^n.
\]
El límite inverso
\[
\Zp := \varprojlim_{n\geq 1} \mathbb{Z}/p^n\mathbb{Z}
= \left\{(x_n)_{n\geq 1}\in \prod_{n\geq 1}\mathbb{Z}/p^n\mathbb{Z} : x_{n+1}\equiv x_n\pmod{p^n}\right\}
\]
es el anillo de enteros $p$-ádicos con su topología usual, y es profinito.
\end{example}

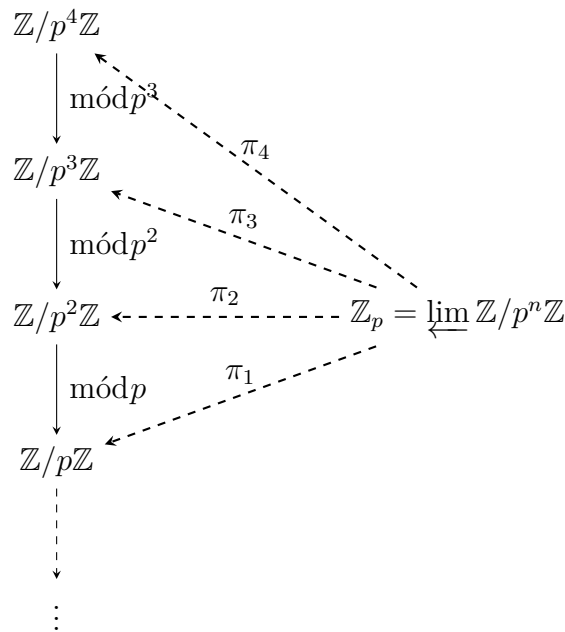
\begin{figure}[h]
  \centering
  \begin{tikzpicture}[>=stealth, node distance=1.2cm]
    \node (Z4) {$\mathbb{Z}/p^4\mathbb{Z}$};
    \node (Z3) [below=of Z4] {$\mathbb{Z}/p^3\mathbb{Z}$};
    \node (Z2) [below=of Z3] {$\mathbb{Z}/p^2\mathbb{Z}$};
    \node (Z1) [below=of Z2] {$\mathbb{Z}/p\mathbb{Z}$};
    \node (dots) [below=of Z1] {$\vdots$};
    \node (Zp) [right=3cm of Z2] {$\Zp = \varprojlim \mathbb{Z}/p^n\mathbb{Z}$};
    
    \draw[->] (Z4) to node[right] {$\bmod p^3$} (Z3);
    \draw[->] (Z3) to node[right] {$\bmod p^2$} (Z2);
    \draw[->] (Z2) to node[right] {$\bmod p$} (Z1);
    \draw[->,dashed] (Z1) to (dots);
    
    \draw[->,thick,dashed] (Zp) to node[above] {$\pi_4$} (Z4);
    \draw[->,thick,dashed] (Zp) to node[above] {$\pi_3$} (Z3);
    \draw[->,thick,dashed] (Zp) to node[above] {$\pi_2$} (Z2);
    \draw[->,thick,dashed] (Zp) to node[above] {$\pi_1$} (Z1);
  \end{tikzpicture}
  \caption{Esquema del sistema proyectivo que define los enteros $p$-ádicos. Cada nivel representa las clases de congruencia módulo $p^n$, y las flechas hacia abajo indican las reducciones naturales $\varphi_{n+1,n}:\mathbb{Z}/p^{n+1}\mathbb{Z}\to\mathbb{Z}/p^n\mathbb{Z}$. Las flechas punteadas gruesas $\pi_n$ desde $\Zp$ hacia cada $\mathbb{Z}/p^n\mathbb{Z}$ son las proyecciones canónicas que satisfacen $\pi_n = \varphi_{n+1,n} \circ \pi_{n+1}$. Un elemento de $\Zp$ corresponde a una sucesión compatible $(x_1, x_2, x_3, \ldots)$ donde $x_n \in \mathbb{Z}/p^n\mathbb{Z}$ y $x_{n+1} \equiv x_n \pmod{p^n}$.}
  \label{fig:Zp-tree}
\end{figure}

\subsection{Equivalencia: espacios de Stone y profinitos}\label{subsec:stone-profinite}

\begin{proposition}\label{prop:profinite-stone}
Un espacio topológico $X$ es profinito si y sólo si es compacto, Hausdorff y totalmente desconectado, es decir, si y sólo si es un espacio de Stone.
\end{proposition}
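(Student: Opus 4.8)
The plan is to prove the two implications of this equivalence separately. The forward direction (profinite $\Rightarrow$ Stone) is essentially free: it was already recorded in Remark~\ref{rem:profinite-compacto}, since a finite discrete space is compact, Hausdorff and totally disconnected, these properties are preserved by arbitrary products (Tychonoff) and inherited by closed subspaces, and the inverse limit is a closed subspace of the product $\prod_i X_i$. So I would simply cite that remark and spend the real effort on the converse.

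For the hard direction (Stone $\Rightarrow$ profinite), the strategy I would follow is to exhibit an explicit profinite presentation of $X$ using its Boolean algebra of clopens. Let $X$ be a Stone space and set $B = \Clop(X)$. By Theorem~\ref{thm:X-isom-XClopX} we already know $X \cong X_B$, so it suffices to realize $X_B$ as an inverse limit of finite discrete spaces. The indexing set would be the directed poset $I$ of all \emph{finite} Boolean subalgebras $A \subseteq B$, ordered by inclusion; this is directed because the subalgebra generated by two finite subalgebras is again finite. For each such $A$, the Stone space $X_A$ is finite (a finite Boolean algebra has finitely many ultrafilters, namely its atoms, as in Exercise~\ref{ex:boolean-atoms}), and an inclusion $A \subseteq A'$ induces by restriction of ultrafilters a map $X_{A'} \to X_A$, giving a projective system of finite discrete spaces.

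The key step is then to produce a homeomorphism $X_B \cong \varprojlim_A X_A$. The natural candidate sends an ultrafilter $U$ on $B$ to the compatible family $(U \cap A)_A$ of its restrictions; I would check that each $U \cap A$ is an ultrafilter on $A$ and that the restriction maps are respected, so this lands in the inverse limit and is continuous. Injectivity follows because $B = \bigcup_A A$, so an ultrafilter is determined by all its finite restrictions. Surjectivity is where the compactness of $X$ (equivalently, the ultrafilter/finite-intersection machinery) does the work: given a compatible family of ultrafilters $(U_A)_A$, the union $\bigcup_A U_A$ has the finite intersection property and generates a genuine ultrafilter on $B$ restricting correctly. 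I expect surjectivity to be the main obstacle, since it requires verifying that the glued family is consistent and maximal rather than merely a filter; the cleanest route is to argue that $\bigcup_A U_A$ is already a filter that decides every element (each $a \in B$ lies in the finite subalgebra it generates), hence is an ultrafilter by Lemma~\ref{lem:caracterizacion-ultrafiltro}. Finally, both sides are compact Hausdorff, so the continuous bijection is automatically a homeomorphism, and composing with the identification from Theorem~\ref{thm:X-isom-XClopX} exhibits $X$ itself as profinite.
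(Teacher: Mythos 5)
Your proposal is correct, and it is worth noting how it relates to the proof in the paper. The paper argues on the topological side: it indexes the projective system by the directed set of finite clopen partitions $\mathcal{U}$ of $X$, sends $x$ to the block containing it via $p_{\mathcal{U}}:X\to F_{\mathcal{U}}$, and invokes separation by clopens plus compactness to conclude that $\Psi:X\to\varprojlim F_{\mathcal{U}}$ is a homeomorphism. You argue on the dual, algebraic side: you first identify $X\cong X_B$ with $B=\Clop(X)$ via el Teorema~\ref{thm:X-isom-XClopX}, index by finite Boolean subalgebras $A\subseteq B$, and glue compatible families of ultrafilters. These two directed systems are canonically equivalent --- a finite subalgebra corresponds to the clopen partition formed by its atoms, and an ultrafilter on it to the unique atom it contains --- so the combinatorial content is the same; the difference is where the work is localized. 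The paper's version puts the burden on a somewhat informal appeal to the universal property of the limit and to compactness of $X$; yours replaces that by the explicit surjectivity argument, which is the genuinely new content of your write-up and which you handle correctly: $\bigcup_A U_A$ is a proper filter (any two elements live in a common finite subalgebra by directedness, where compatibility forces closure under $\wedge$), it decides every $a\in B$ because $a$ lies in the four-element subalgebra $\{0,a,\neg a,1\}$, hence it is an ultrafilter by el Lema~\ref{lem:caracterizacion-ultrafiltro}, and it restricts to each $U_A$ again by directedness. One gain of your route is that this surjectivity step is purely algebraic and fully checkable, with compactness entering only once at the very end (continuous bijection from a compact to a Hausdorff space); the cost is the extra layer of translation through la dualidad de Stone, which the paper's direct construction avoids.
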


\begin{proof}
La dirección $(\Rightarrow)$ es la Observación~\ref{rem:profinite-compacto}.

Para $(\Leftarrow)$, sea $X$ compacto, Hausdorff y totalmente desconectado. Consideremos el conjunto $\mathcal{P}$ de todas las particiones finitas de $X$ en clopens y definamos un sistema proyectivo de conjuntos finitos como en la demostración estándar \cite{Johnstone1982}: a cada partición $\mathcal{U}=\{U_1,\dots,U_n\}$ se le asocia $F_{\mathcal{U}}=\{1,\dots,n\}$ y el mapa $p_{\mathcal{U}}:X\to F_{\mathcal{U}}$ que manda $x$ al índice del bloque que lo contiene; refinamientos dan mapas $F_{\mathcal{V}}\to F_{\mathcal{U}}$, y se obtiene un límite inverso $Y$.

El mapa
\[
\Psi:X\to Y,\qquad x\mapsto (p_{\mathcal{U}}(x))_{\mathcal{U}\in\mathcal{P}}
\]
es homeomorfismo; esto usa la separación por clopens, la compacidad y la propiedad universal del límite inverso. Por tanto $X$ es profinito.
\end{proof}

\section{Compactificación de Stone--Čech}\label{sec:stone-cech}

Llegamos ahora a la aparición más contundente de la lógica booleana en topología:
la compactificación de Stone--Čech. Esta construcción ocupa un lugar central en aritmética
porque proporciona una manera de "completar" objetos discretos (como $\mathbb{Z}$ o
$\mathbb{N}$) en espacios compactos que preservan toda la información booleana.
Para nuestras aplicaciones aritméticas bastará trabajar con espacios
\emph{discretos}, que son los más relevantes en teoría de números y geometría
aritmética.

\subsection{Compactificaciones como objetos universales}\label{subsec:stone-cech-universal}

Sea $X$ un espacio topológico. Una \textit{compactificación} de $X$ es un par $(K,\eta)$ donde:
\begin{itemize}
  \item $K$ es un espacio compacto Hausdorff;
  \item $\eta:X\to K$ es continua, inyectiva y con imagen densa.
\end{itemize}

Dadas dos compactificaciones $(K,\eta)$ y $(L,\lambda)$ de $X$, un \textit{morfismo de compactificaciones} $(K,\eta)\to (L,\lambda)$ es una aplicación continua $f:K\to L$ tal que $f\circ \eta = \lambda$.

\begin{definition}\label{def:Cat-CompX}
Sea $X$ un espacio topológico. Definimos la categoría $\mathrm{Comp}(X)$:
\begin{itemize}
  \item objetos: compactificaciones $(K,\eta)$ de $X$;
  \item morfismos: morfismos de compactificaciones.
\end{itemize}
\end{definition}

\begin{definition}\label{def:stone-cech-def}
Una \textit{compactificación de Stone--Čech} de $X$ es un objeto terminal en $\mathrm{Comp}(X)$, es decir, una compactificación $(\beta X,\iota)$ tal que para cualquier otra compactificación $(K,\eta)$ existe un único morfismo $f:(K,\eta)\to (\beta X,\iota)$.
\end{definition}

Equivalente y más operativo:

\begin{proposition}[Propiedad universal]\label{prop:betaX-universal}
Sea $X$ un espacio topológico. Una compactificación $(\beta X,\iota)$ de $X$ es de Stone--Čech si y sólo si para todo compacto Hausdorff $K$ y toda aplicación continua $f:X\to K$ existe una única aplicación continua
\[
\overline{f}:\beta X\to K
\]
tal que $\overline{f}\circ \iota = f$.
\end{proposition}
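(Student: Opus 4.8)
The statement is an equivalence between the categorical definition of $\beta X$ (Definition \ref{def:stone-cech-def}) and the extension property, so I would prove the two implications separately; they are quite asymmetric in difficulty.

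The plan for the direction ``extension property $\Rightarrow$ Stone--Čech'' is a direct specialization. Assuming the extension property, I would take an arbitrary compactification $(K,\eta)$ of $X$; since $K$ is compact Hausdorff and $\eta:X\to K$ is continuous, the property applies to $f=\eta$ and produces a continuous map $\overline{\eta}:\beta X\to K$ with $\overline{\eta}\circ\iota=\eta$. This $\overline{\eta}$ is exactly the compactification morphism relating $(\beta X,\iota)$ and $(K,\eta)$ required by the universal role in $\mathrm{Comp}(X)$. Its uniqueness costs nothing: any two continuous maps $\beta X\to K$ that coincide on $\iota(X)$ coincide on its closure, which is all of $\beta X$, because $K$ is Hausdorff and $\iota(X)$ is dense. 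Hence $(\beta X,\iota)$ satisfies Definition \ref{def:stone-cech-def}.

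The reverse direction ``Stone--Čech $\Rightarrow$ extension property'' carries the real content, and the hard part is that the definition only quantifies over \emph{compactificaciones}, whereas here $f:X\to K$ is an \emph{arbitrary} continuous map into a compact Hausdorff space: in general $f$ is neither injective nor of dense image, so $(K,f)$ is not an object of $\mathrm{Comp}(X)$ and the definition cannot be applied to it directly. My plan is the classical graph construction. I would form the continuous map $(\iota,f):X\to\beta X\times K$ and set $K':=\overline{(\iota,f)(X)}$, the closure of its image. As a closed subspace of a product of compact Hausdorff spaces, $K'$ is compact Hausdorff; the corestriction $\eta':=(\iota,f):X\to K'$ has dense image by construction and is injective because its first coordinate $\iota$ already is. Thus $(K',\eta')$ is a genuine compactification of $X$.

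With $(K',\eta')$ now a legitimate object of $\mathrm{Comp}(X)$, I would invoke the defining property of $\beta X$ (Definition \ref{def:stone-cech-def}) applied to $(K',\eta')$ to obtain the canonical continuous map $g:\beta X\to K'$ with $g\circ\iota=\eta'$, and then define $\overline{f}:=\pi_K\circ g$, where $\pi_K:K'\hookrightarrow\beta X\times K\to K$ is the restriction of the second projection. A one-line check gives $\overline{f}\circ\iota=\pi_K\circ\eta'=f$, so $\overline{f}$ extends $f$, and uniqueness again follows from density of $\iota(X)$ together with Hausdorffness of $K$. I expect the only genuinely delicate point to be checking that $(K',\eta')$ is honestly a compactification --- the injectivity and denseness of $\eta'$, and the identification of $\pi_K\circ\eta'$ with $f$ --- since everything else is formal bookkeeping with the universal arrows.
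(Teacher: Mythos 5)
Tu plan es esencialmente el argumento estándar y, salvo un punto que comento abajo, es correcto y completo. Conviene que sepas que el artículo enuncia la Proposición~\ref{prop:betaX-universal} sin demostración alguna, así que no hay prueba del texto con la cual contrastar: tu propuesta llena ese hueco. La dirección fácil (propiedad de extensión implica universalidad en $\mathrm{Comp}(X)$) se reduce, como dices, a especializar a $f=\eta$ y a observar que dos mapas continuos hacia un Hausdorff que coinciden en el denso $\iota(X)$ coinciden en todo $\beta X$. Identificas bien la dificultad real de la otra dirección: un $f:X\to K$ arbitrario no define un objeto de $\mathrm{Comp}(X)$, y la construcción de la gráfica $K':=\overline{(\iota,f)(X)}\subseteq \beta X\times K$, con $\eta'=(\iota,f)$ inyectiva por su primera coordenada y de imagen densa por construcción, es exactamente el remedio clásico; la verificación $\pi_K\circ\eta'=f$ y la unicidad por densidad están bien.

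El único punto genuinamente delicado que no confrontas es la orientación de la flecha universal. La Definición~\ref{def:stone-cech-def}, leída literalmente, pide que $(\beta X,\iota)$ sea \emph{terminal}: para cada compactificación $(K,\eta)$ debe existir un único morfismo $(K,\eta)\to(\beta X,\iota)$, es decir, un mapa continuo $K\to\beta X$ con $f\circ\eta=\iota$. Tu prueba usa sistemáticamente la dirección opuesta (obtienes $g:\beta X\to K'$ \emph{saliendo} de $\beta X$), que es la que la Proposición necesita y la que aparece en la Figura~\ref{fig:betaS-universal} y en el Teorema~\ref{thm:betaX-universal-discreto}. Con la lectura literal de la definición la equivalencia sería falsa: por ejemplo, la compactificación por un punto de $\mathbb{N}$ no admite ningún mapa continuo hacia $\beta\mathbb{N}$ que fije los naturales, pues la sucesión $(n)_{n\in\mathbb{N}}$ no converge en $\beta\mathbb{N}$. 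Debes, pues, hacer explícito que trabajas con la versión corregida de la definición ($\beta X$ como objeto \emph{inicial} respecto de los morfismos tal como los define el texto, o equivalentemente terminal tras invertir la dirección de los morfismos de compactificaciones); hecho ese ajuste, tu argumento es completo y correcto.
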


En términos categóricos, si $\CHaus$ denota la categoría de espacios compactos Hausdorff y $\Top$ la de espacios topológicos, la compactificación de Stone--Čech define un funtor
\[
\beta: \Top_{\mathrm{Tych}}\to \CHaus
\]
(en la subcategoría de espacios completamente regulares Hausdorff) que es \emph{adjunto izquierdo} de la inclusión $\CHaus\hookrightarrow \Top_{\mathrm{Tych}}$. En este artículo trabajaremos sólo con el caso donde $X$ es discreto, y la situación se simplifica enormemente.

\begin{figure}[h]
  \centering
  \begin{tikzcd}[row sep=large, column sep=large]
    S \arrow[r, "i"] \arrow[dr, "f"'] & \beta S \arrow[d, "\tilde{f}"] \\
    & K
  \end{tikzcd}
  \caption{Propiedad universal de la compactificación de Stone--Čech: toda aplicación $f:S\to K$ a un compacto Hausdorff $K$ se extiende de manera única a $\tilde{f}:\beta S\to K$.}
  \label{fig:betaS-universal}
\end{figure}

\subsection{Construcción para espacios discretos}\label{subsec:stone-cech-discreto}

Sea $X$ un conjunto discreto. La potencia $\mathcal{P}(X)$ es una álgebra de Boole, y su espacio de Stone $X_{\mathcal{P}(X)}$ se describe como el conjunto de ultrafiltros sobre $\mathcal{P}(X)$.

\begin{definition}\label{def:betaX-discreto}
Si $X$ es un conjunto discreto, definimos
\[
\beta X := X_{\mathcal{P}(X)} = \{U \subseteq \mathcal{P}(X) : U \textit{ es un ultrafiltro sobre } X\}.
\]
Damos a $\beta X$ la topología generada por los conjuntos básicos
\[
\widehat{A} := \{U\in \beta X : A\in U\},\qquad A\subseteq X.
\]
Definimos la aplicación
\[
\iota:X\to \beta X,\qquad x\mapsto U_x := \{A\subseteq X : x\in A\}.
\]
\end{definition}

\begin{lemma}\label{lem:iota-emb-denso}
Para $X$ discreto, la aplicación $\iota:X\to \beta X$ es inyectiva, continua (trivialmente), inmersión topológica y con imagen densa.
\end{lemma}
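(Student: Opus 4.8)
The plan is to dispatch the four assertions in increasing order of difficulty, using throughout the explicit description of the basic clopens $\widehat{A}$ together with the fact that, since $X$ is discrete, $\mathcal{P}(X)$ is the \emph{full} power-set algebra, so that every $A\subseteq X$ (in particular every singleton) determines a legitimate clopen $\widehat{A}\subseteq\beta X$. The whole lemma hinges on keeping straight the translation between the membership statements ``$A\in U_x$'' and ``$x\in A$''.

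For injectivity I would separate two distinct points $x\neq y$ by the singleton $\{x\}$: it lies in the principal ultrafilter $U_x$ but not in $U_y$ (because $y\notin\{x\}$), whence $U_x\neq U_y$. For continuity I would compute the preimage of a basic open directly, $\iota^{-1}(\widehat{A})=\{x\in X:A\in U_x\}=\{x\in X:x\in A\}=A$, which is open precisely because $X$ carries the discrete topology; this is the only place where discreteness does real work, and it is exactly what makes ``continuous'' automatic.

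The embedding claim is where the argument has genuine content. Since $\iota$ is already an injective continuous map, it suffices to show that $\iota\colon X\to\iota(X)$ is a homeomorphism onto its image, i.e.\ that the subspace topology $\iota(X)$ inherits from $\beta X$ is discrete. The key computation is $\widehat{\{x\}}\cap\iota(X)=\{U_x\}$, since $\{x\}\in U_y$ forces $y=x$. Hence each singleton $\{U_x\}$ is open in $\iota(X)$, the induced topology is discrete, and a continuous bijection between discrete spaces is a homeomorphism; this identifies $X$ with $\iota(X)$ topologically. I regard this as the main obstacle, not because it is deep but because it is the one step that is not purely formal: one must exhibit the correct separating clopen and verify the intersection collapses to a single point.

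Finally, for density I would show that every nonempty basic clopen $\widehat{A}$ meets $\iota(X)$. If $\widehat{A}\neq\varnothing$ there is an ultrafilter containing $A$, so $A\neq\varnothing$ (the empty set never belongs to a filter); choosing any $x\in A$ gives $A\in U_x$, i.e.\ $U_x\in\widehat{A}\cap\iota(X)$. As the family $\{\widehat{A}\}_{A\subseteq X}$ is a base, $\iota(X)$ is dense. I expect no serious difficulty here; the only points requiring care are to remember that $\widehat{\varnothing}=\varnothing$ imposes no condition, and once more to preserve the distinction between $A\in U_x$ and $x\in A$ on which the entire lemma turns.
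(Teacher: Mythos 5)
Tu propuesta es correcta y sigue esencialmente el mismo camino que la prueba del artículo: separación por el singulete $\{x\}$ para la inyectividad, el cálculo $\iota^{-1}(\widehat{A})=A$ para la continuidad/inmersión, y la observación de que todo básico no vacío $\widehat{A}$ contiene algún $U_x$ con $x\in A$ para la densidad. La única variación es cosmética: en el paso de inmersión tú verificas que $\widehat{\{x\}}\cap\iota(X)=\{U_x\}$ para concluir que la topología de subespacio en la imagen es discreta, mientras que el artículo observa directamente que $\iota^{-1}(\widehat{A})=A$ recorre todos los abiertos de $X$; ambos argumentos son equivalentes.
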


\begin{proof}
Si $x\neq y$, el subconjunto $A=\{x\}$ cumple que $A\in U_x$ y $A\notin U_y$, de donde $U_x\neq U_y$; así $\iota$ es inyectiva.

La continuidad es inmediata porque $X$ es discreto: toda función desde un discreto es continua.

Para ver que es inmersión topológica, basta notar que la topología de $X$ coincide con la inducida: el preimagen de $\widehat{A}$ por $\iota$ es
\[
\iota^{-1}(\widehat{A}) = \{x\in X : U_x\in \widehat{A}\}
= \{x\in X : A\in U_x\}
= A,
\]
y los $A\subseteq X$ son precisamente todos los abiertos de $X$.

Para la densidad, sea $W\subseteq \beta X$ un abierto no vacío. Como los $\widehat{A}$ forman una base, existe $A\subseteq X$ con $\widehat{A}\subseteq W$ y $\widehat{A}\neq \varnothing$. Si $A=\varnothing$, entonces $\widehat{A}=\varnothing$, imposible, así que $A\neq \varnothing$. Elige $x\in A$. Entonces $U_x\in \widehat{A}\subseteq W$ y $U_x=\iota(x)$, de modo que $W$ intersecta la imagen de $\iota$.
\end{proof}

El paso clave es demostrar la propiedad universal de Stone--Čech para esta construcción.

\begin{lemma}[Convergencia de ultrafiltros]\label{lem:ultrafiltro-compacto}
Sea $K$ un espacio compacto Hausdorff y $\mathcal{U}$ un ultrafiltro de subconjuntos de $K$. Entonces la intersección
\[
\bigcap_{A\in \mathcal{U}} \overline{A}
\]
es un singleton $\{y\}$. Decimos que $\mathcal{U}$ \emph{converge} a $y$.
\end{lemma}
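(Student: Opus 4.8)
The plan is to split the statement into two independent claims: that the intersection $\bigcap_{A\in\mathcal{U}}\overline{A}$ contains \emph{at least} one point (existence) and that it contains \emph{at most} one point (uniqueness). Throughout I would regard $\mathcal{U}$ as an ultrafilter on the Boolean algebra $\mathcal{P}(K)$, whose complement operation is $\neg A = K\setminus A$, so that the characterization of Lemma~\ref{lem:caracterizacion-ultrafiltro} applies: for every $A\subseteq K$, exactly one of $A$ and $K\setminus A$ lies in $\mathcal{U}$.

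For existence, I would invoke compactness through the finite intersection property. The members of $\mathcal{U}$ form a filter, so any finite subcollection $A_1,\dots,A_n\in\mathcal{U}$ satisfies $A_1\cap\cdots\cap A_n\in\mathcal{U}$, and in particular this set is nonempty since $\varnothing\notin\mathcal{U}$. Passing to closures, the closed sets $\overline{A_1},\dots,\overline{A_n}$ satisfy $\overline{A_1}\cap\cdots\cap\overline{A_n}\supseteq A_1\cap\cdots\cap A_n\neq\varnothing$. Hence the family $\{\overline{A}:A\in\mathcal{U}\}$ of closed subsets of $K$ has the finite intersection property, and compactness of $K$ forces $\bigcap_{A\in\mathcal{U}}\overline{A}\neq\varnothing$.

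For uniqueness, I would argue by contradiction using Hausdorffness together with the ultrafilter dichotomy --- this is precisely where being an ultrafilter, and not merely a filter, is essential. Suppose $y_1\neq y_2$ both lie in the intersection. Since $K$ is Hausdorff, choose disjoint open sets $V_1\ni y_1$ and $V_2\ni y_2$. By the dichotomy of Lemma~\ref{lem:caracterizacion-ultrafiltro}, either $V_1\in\mathcal{U}$ or $K\setminus V_1\in\mathcal{U}$. In the first case $y_2\in\overline{V_1}$, contradicting that $V_2$ is an open neighborhood of $y_2$ disjoint from $V_1$; in the second case $y_1\in\overline{K\setminus V_1}$, contradicting that $V_1$ is an open neighborhood of $y_1$ disjoint from $K\setminus V_1$. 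Either way we reach a contradiction, so $y_1=y_2$.

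I expect the uniqueness step to be the genuinely delicate point. Existence works for any filter with the finite intersection property, but uniqueness must exploit maximality: a general filter can cluster at several points of a compact Hausdorff space, and it is exactly the fact that $\mathcal{U}$ decides every subset that collapses the cluster set to a single limit. The main things to get right are the bookkeeping of which of $V_1$ or its complement lies in $\mathcal{U}$, and the elementary but crucial observation that a point possessing an open neighborhood disjoint from a set $A$ cannot belong to $\overline{A}$.
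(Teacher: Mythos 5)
Your proposal is correct and follows essentially the same route as the paper's proof: existence via the finite intersection property of $\{\overline{A}:A\in\mathcal{U}\}$ and compactness, uniqueness via Hausdorff separation combined with the ultrafilter dichotomy applied to one of the separating open sets. The bookkeeping in your two cases (a point with an open neighborhood disjoint from $A$ cannot lie in $\overline{A}$) matches the paper's argument exactly.
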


\begin{proof}
La familia de cerrados $\{\overline{A}:A\in \mathcal{U}\}$ tiene la propiedad de intersección finita: si $\overline{A_1},\dots,\overline{A_n}$ aparecen, entonces
\[
\overline{A_1}\cap\cdots\cap\overline{A_n}\supseteq \overline{A_1\cap\cdots\cap A_n},
\]
y como $\mathcal{U}$ es filtro, $A_1\cap\cdots\cap A_n\in \mathcal{U}$ y no puede ser vacío. Por compacidad de $K$, la intersección total $\bigcap_{A\in \mathcal{U}} \overline{A}$ es no vacía.

Para la unicidad, supongamos que $x\neq y$ pertenecen a dicha intersección. Como $K$ es Hausdorff, existen abiertos disjuntos $U,V$ en $K$ con $x\in U$ y $y\in V$. Como $\mathcal{U}$ es ultrafiltro de subconjuntos de $K$, exactamente uno de $U$ o $K\setminus U$ pertenece a $\mathcal{U}$.

Si $U\in \mathcal{U}$, entonces $x\in \overline{U}$ (pues $x\in \bigcap_{A\in \mathcal{U}} \overline{A}$). Pero $y\notin \overline{U}$, ya que $V$ es un entorno abierto de $y$ disjunto de $U$, y por tanto $y\notin \overline{U}$. Esto contradice que $y\in \bigcap_{A\in \mathcal{U}} \overline{A}$.

Si $K\setminus U\in \mathcal{U}$, entonces $y\in \overline{K\setminus U}$ (pues $y\in \bigcap_{A\in \mathcal{U}} \overline{A}$). Pero $x\notin \overline{K\setminus U}$, ya que $U$ es un entorno abierto de $x$ disjunto de $K\setminus U$, y por tanto $x\notin \overline{K\setminus U}$. Esto contradice que $x\in \bigcap_{A\in \mathcal{U}} \overline{A}$.

En ambos casos llegamos a contradicción. Por tanto, la intersección sólo puede contener un punto.
\end{proof}

\begin{theorem}\label{thm:betaX-universal-discreto}
Sea $X$ un conjunto discreto y $(\beta X,\iota)$ como en la Definición~\ref{def:betaX-discreto}. Entonces $(\beta X,\iota)$ es la compactificación de Stone--Čech de $X$ en el sentido de la Proposición~\ref{prop:betaX-universal}.
\end{theorem}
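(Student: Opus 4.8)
The plan is to verify the universal property of Proposition~\ref{prop:betaX-universal}: given a compact Hausdorff space $K$ and a continuous map $f:X\to K$, I must produce a unique continuous $\overline{f}:\beta X\to K$ with $\overline{f}\circ\iota=f$. First I would record that $(\beta X,\iota)$ really is a compactification, so that the statement even makes sense: $\beta X=X_{\mathcal{P}(X)}$ is compact Hausdorff (and totally disconnected) by Theorem~\ref{thm:XB-Stone}, and Lemma~\ref{lem:iota-emb-denso} already gives that $\iota$ is a dense topological embedding. Thus only the universal extension remains to be constructed and analyzed.

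For the construction I would exploit Lemma~\ref{lem:ultrafiltro-compacto}. Given $U\in\beta X$, i.e.\ an ultrafilter on $X$, push it forward to $K$ by setting
\[
f_*(U):=\{B\subseteq K : f^{-1}(B)\in U\}.
\]
A routine check, using that preimages commute with intersection, inclusion and complement, shows $f_*(U)$ is an ultrafilter of subsets of $K$. By Lemma~\ref{lem:ultrafiltro-compacto} it converges to a unique point, which I define to be $\overline{f}(U)$. The compatibility $\overline{f}\circ\iota=f$ is then immediate: since $\iota(x)=U_x$ is the principal ultrafilter at $x$, the pushforward $f_*(U_x)$ is the principal ultrafilter at $f(x)$, whose closure-intersection collapses to $\{f(x)\}$ because $K$ is Hausdorff.

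The hard part will be the continuity of $\overline{f}$. The key bridge is the observation that if an ultrafilter $\mathcal{U}$ on $K$ converges to $y$ in the sense of Lemma~\ref{lem:ultrafiltro-compacto}, then every open neighborhood $V$ of $y$ already lies in $\mathcal{U}$: otherwise $K\setminus V\in\mathcal{U}$, and this closed set, excluding $y$, would contradict $y\in\bigcap_{A\in\mathcal{U}}\overline{A}$. With this in hand I would fix $U_0\in\beta X$, set $y_0=\overline{f}(U_0)$, and take an open $V\ni y_0$. Using that compact Hausdorff spaces are regular, choose open $V'$ with $y_0\in V'\subseteq\overline{V'}\subseteq V$. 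Then $V'\in f_*(U_0)$, that is $f^{-1}(V')\in U_0$, so the basic clopen $\widehat{f^{-1}(V')}$ is an open neighborhood of $U_0$ in $\beta X$. For any $U\in\widehat{f^{-1}(V')}$ we get $V'\in f_*(U)$, hence $\overline{f}(U)\in\overline{V'}\subseteq V$; thus $\overline{f}$ carries a whole neighborhood of $U_0$ into $V$, establishing continuity at the arbitrary point $U_0$.

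Finally, uniqueness comes for free: any continuous extension is pinned down on the dense subset $\iota(X)$ by the requirement $\overline{f}\circ\iota=f$, and two continuous maps into the Hausdorff space $K$ that agree on a dense set must coincide. This closes the argument, the only genuinely delicate point being the neighborhood-membership characterization of convergence that powers the continuity step.
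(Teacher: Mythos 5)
Your proposal is correct, and its skeleton coincides with the paper's: both define $\overline{f}(U)$ as the limit of the pushforward ultrafilter $f_\ast(U)=\{B\subseteq K : f^{-1}(B)\in U\}$ via Lemma~\ref{lem:ultrafiltro-compacto}, and both then check extension, continuity and uniqueness. The differences lie in the last two steps and are worth recording. For continuity, the paper asserts the exact identity $\overline{f}^{-1}(W)=\widehat{f^{-1}(W)}$ for every open $W\subseteq K$; the inclusion $\overline{f}^{-1}(W)\subseteq\widehat{f^{-1}(W)}$ is fine, but the reverse inclusion is too strong as stated (take $X=\mathbb{N}$, $K=[0,1]$, $f(n)=1/n$, $W=(0,1]$ and $U$ non-principal: then $f^{-1}(W)=\mathbb{N}\in U$, yet $\overline{f}(U)=0\notin W$). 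Your local argument --- shrink $W$ to $V'$ with $\overline{V'}\subseteq W$ using regularity of the compact Hausdorff $K$, note $V'\in f_\ast(U_0)$ via the neighborhood-membership characterization of convergence, and observe that $\widehat{f^{-1}(V')}$ is carried into $\overline{V'}\subseteq W$ --- sidesteps this and is the argument one actually needs; it buys correctness at the modest price of invoking regularity. For uniqueness, the paper re-runs the convergence characterization directly on a competing extension $g$, which keeps the argument self-contained at the level of ultrafilters; you instead invoke the general fact that two continuous maps into a Hausdorff space agreeing on a dense subset agree everywhere, which is shorter and equally valid given the density established in Lemma~\ref{lem:iota-emb-denso}. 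Your preliminary remark that one should first confirm $(\beta X,\iota)$ is a compactification at all is a good instinct that the paper leaves implicit.
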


\begin{proof}
Sea $K$ un compacto Hausdorff y $f:X\to K$ una aplicación continua (cualquier función, pues $X$ es discreto). Para cada ultrafiltro $U\in \beta X$, consideremos la familia
\[
\mathcal{F}_U := \{ f(A) : A\in U\}
\]
de subconjuntos de $K$. Definimos el ultrafiltro de subconjuntos de $K$
\[
f_\ast(U) := \{B\subseteq K : f^{-1}(B)\in U\}.
\]
Es un ultrafiltro: se verifica directamente que es filtro y que para cada $B$, exactamente uno de $B$ o $K\setminus B$ está en $f_\ast(U)$.

Por el Lema~\ref{lem:ultrafiltro-compacto}, $f_\ast(U)$ converge a un único punto $y\in K$, que definimos como
\[
\overline{f}(U) := y.
\]
Por la definición de convergencia, se caracteriza por la propiedad:
\begin{equation}\label{eq:caracterizacion-beta-f}
\forall\ \textit{vecindario abierto }W \textit{ de } y,\quad f^{-1}(W)\in U.
\end{equation}

Demostremos ahora que $\overline{f}:\beta X\to K$ es continua y que extiende a $f$.

\smallskip

\emph{Extensión.}  
Sea $x\in X$ y consideremos $U_x=\iota(x)$. Sea $y:=\overline{f}(U_x)$. Por \eqref{eq:caracterizacion-beta-f}, para todo entorno abierto $W$ de $y$ tenemos $f^{-1}(W)\in U_x$, es decir, $x\in f^{-1}(W)$, lo cual equivale a $f(x)\in W$. Esto significa que todo entorno de $y$ contiene a $f(x)$, luego $y=f(x)$. Así
\[
\overline{f}\circ \iota (x) = \overline{f}(U_x) = f(x).
\]

\smallskip

\emph{Continuidad.}  
Sea $W\subseteq K$ abierto. Consideremos $A:=f^{-1}(W)\subseteq X$. Afirmamos que
\[
\overline{f}^{-1}(W) = \widehat{A}.
\]
En efecto, si $U\in \widehat{A}$, entonces $A\in U$ y por tanto $f^{-1}(W)\in U$, lo que implica que el punto $y=\overline{f}(U)$ satisface que $W$ es vecindario de $y$; en particular $y\in W$. Así $\overline{f}(U)\in W$, es decir, $U\in \overline{f}^{-1}(W)$.

Recíprocamente, si $U\in \overline{f}^{-1}(W)$, entonces $y=\overline{f}(U)\in W$. Por definición de convergencia, $f^{-1}(W)\in U$, o sea, $A\in U$ y $U\in \widehat{A}$. Esto muestra la igualdad
\[
\overline{f}^{-1}(W) = \widehat{f^{-1}(W)},
\]
que es abierta. Por tanto, $\overline{f}$ es continua.

\smallskip

\emph{Unicidad.}  
Supongamos que $g:\beta X\to K$ es otra aplicación continua con $g\circ \iota = f$. La unicidad de la extensión se garantiza por la densidad de $\iota(X)$ en $\beta X$ (Lema~\ref{lem:iota-emb-denso}) y el hecho de que $K$ es Hausdorff: dos aplicaciones continuas que coinciden en un subconjunto denso de un espacio topológico deben coincidir en todo el espacio cuando el codominio es Hausdorff.

Más explícitamente, sea $U\in \beta X$ y escribamos $y:=g(U)$. Para todo entorno abierto $W$ de $y$, $g^{-1}(W)$ es un abierto de $\beta X$ que contiene a $U$. En particular, existe $A\in U$ con $\widehat{A}\subseteq g^{-1}(W)$ (los $\widehat{A}$ forman una base). Entonces, para todo $x\in A$ se tiene
\[
f(x) = g(\iota(x))\in W,
\]
lo que implica $A\subseteq f^{-1}(W)$ y, por tanto, $f^{-1}(W)\in U$. Esto muestra que $y$ satisface \eqref{eq:caracterizacion-beta-f}, por lo que $y=\overline{f}(U)$. Es decir, $g=\overline{f}$.

\smallskip

Con esto hemos mostrado que $(\beta X,\iota)$ verifica la propiedad universal de la Proposición~\ref{prop:betaX-universal}.
\end{proof}

\begin{remark}\label{rem:betaX-extremadamente-desconectado}
Un hecho fundamental que conecta la compactificación de Stone--Čech con la teoría de espacios extremadamente desconectados (Sección~\ref{sec:extremally-disconnected}) es el siguiente: si $X$ es un conjunto discreto, entonces $\beta X$ es un espacio \emph{extremadamente desconectado}. Esto se deduce inmediatamente del hecho de que $\beta X$ es el espacio de Stone de la álgebra de Boole $\mathcal{P}(X)$, que es completa (admite uniones e intersecciones arbitrarias), y del Corolario~\ref{cor:Boole-completa-ED} que establece que los espacios de Stone extremadamente desconectados corresponden exactamente a las álgebras de Boole completas. Esta propiedad será crucial en la Sección~\ref{sec:extremally-disconnected} cuando estudiemos la proyectividad de $\beta X$ en la categoría de compactos Hausdorff.
\end{remark}

\subsection{Ejemplos aritméticos y relación con completaciones}\label{subsec:stone-cech-aritmetica}

\begin{example}[Espacios finitos]\label{ex:betaX-finito}
Si $X$ es finito discreto, todo ultrafiltro es principal (ejercicio clásico), de modo que
\[
\beta X = \{U_x : x\in X\}\cong X.
\]
La compactificación de Stone--Čech no añade puntos nuevos: un compacto finito ya es "completo".
\end{example}

\begin{example}[$\beta\mathbb{N}$]\label{ex:betaN}
Para $X=\mathbb{N}$ discreto, $\beta\mathbb{N}$ es el espacio de Stone de la álgebra de Boole $\mathcal{P}(\mathbb{N})$. Los ultrafiltros \emph{principales} corresponden a puntos $n\in\mathbb{N}$; los ultrafiltros \emph{no principales} son puntos "en el infinito", que codifican maneras coherentes de decidir qué subconjuntos de $\mathbb{N}$ son "grandes".

La topología de $\beta\mathbb{N}$ tiene como base los conjuntos
\[
\widehat{A} = \{U\in \beta\mathbb{N} : A\in U\},\qquad A\subseteq \mathbb{N}.
\]
Combinada con la estructura semigrupal (la operación $+$ se extiende a $\beta\mathbb{N}$), esta compactificación es una herramienta poderosa en combinatoria aditiva y en resultados de recurrencia (vía ultrafiltros idempotentes, etc.).
\end{example}

\begin{example}[$\beta\mathbb{Z}$ y completación profinita]\label{ex:betaZ-profinita}
Consideremos $\mathbb{Z}$ como grupo aditivo discreto. Por un lado, tenemos la compactificación de Stone--Čech $\beta\mathbb{Z}$; por otro, la completación profinita
\[
\widehat{\mathbb{Z}} := \varprojlim_{n\geq 1} \mathbb{Z}/n\mathbb{Z},
\]
que es un compacto Hausdorff y profinito.

La compactificación de Stone--Čech es universal para \emph{todas} las aplicaciones $\mathbb{Z}\to K$ con $K$ compacto Hausdorff; en cambio, $\widehat{\mathbb{Z}}$ es universal en la subcategoría de grupos profinitos \cite{RibesZalesskii2010}.

Existe una aplicación continua y suprayectiva natural
\[
\pi:\beta\mathbb{Z}\longrightarrow \widehat{\mathbb{Z}}
\]
que extiende la inclusión canónica $\mathbb{Z}\hookrightarrow \widehat{\mathbb{Z}}$.

\smallskip

\emph{Construcción de $\pi$.}  
Para cada $n\geq 1$, consideremos la proyección
\[
r_n:\mathbb{Z}\to \mathbb{Z}/n\mathbb{Z}.
\]
Por la universalidad de $\beta\mathbb{Z}$ (Teorema~\ref{thm:betaX-universal-discreto}), $r_n$ se extiende a una aplicación continua única
\[
\overline{r}_n: \beta\mathbb{Z}\to \mathbb{Z}/n\mathbb{Z}.
\]
Estas aplicaciones son compatibles con los mapas de reducción $\mathbb{Z}/mn\mathbb{Z}\to \mathbb{Z}/n\mathbb{Z}$, de modo que definen un único morfismo
\[
\pi:\beta\mathbb{Z}\to \widehat{\mathbb{Z}}\subseteq \prod_{n\geq 1} \mathbb{Z}/n\mathbb{Z},\qquad
\pi(U) := (\overline{r}_n(U))_{n\geq 1}.
\]

Por construcción, para $z\in \mathbb{Z}$ y su ultrafiltro principal $U_z$ se tiene
\[
\pi(U_z) = (r_n(z))_n,
\]
que identifica a $\mathbb{Z}$ con un subgrupo denso de $\widehat{\mathbb{Z}}$. La imagen de $\pi$ es cerrada (compacta) y contiene a esa copia densa de $\mathbb{Z}$, así que coincide con $\widehat{\mathbb{Z}}$. Por tanto, $\pi$ es suprayectiva.
\end{example}

\begin{remark}[Stone--Čech vs completación profinita]\label{rem:beta-vs-hat}
Desde el punto de vista aritmético, aparecen así dos tipos de "compactificación" de un objeto discreto $X$:
\begin{itemize}
  \item $\beta X$: la compactificación \emph{booleana universal}, controlada por la álgebra de todas las partes $\mathcal{P}(X)$ y la lógica de sus subconjuntos.
  \item $\widehat{X}$: la completación \emph{profinita}, controlada por todos los cocientes finitos de $X$ (grupos, anillos, etc.).
\end{itemize}
En nuestro programa conceptual, $\beta X$ se sitúa del lado lógico (álgebras de Boole, ultrafiltros), mientras que $\widehat{X}$ se sitúa del lado aritmético (límite inverso de cocientes finitos). La aplicación $\pi:\beta\mathbb{Z}\to \widehat{\mathbb{Z}}$ refleja precisamente la tensión y el diálogo entre ambos mundos.
\end{remark}
Véase \cite{ComfortNegrepontis1974} para un tratamiento detallado de $\beta\mathbb{N}$, ultrafiltros y su papel en topología y combinatoria.

\section{Dualidad categórica: \texorpdfstring{$\Bool \simeq \Stone^{\mathrm{op}}$}{Bool ≃ Stoneᵒᵖ}}\label{sec:dualidad-categorica}

Los resultados que hemos desarrollado hasta ahora (el teorema de representación
de Stone y la equivalencia entre espacios de Stone y profinitos) pueden
reformularse en el lenguaje de teoría de categorías como una dualidad. Esta
reformulación categórica no es meramente estética: proporciona el marco necesario
para entender cómo estas construcciones se extienden al contexto de las
Matemáticas Condensadas, que es donde adquieren su máxima relevancia aritmética.
La dualidad categórica también muestra cómo la estructura lógica (álgebras de
Boole) y la estructura topológica (espacios profinitos) son dos caras de la misma
moneda, unificando así los aspectos discretos y continuos que aparecen en
aritmética.

\subsection{Funtores de Stone y de clopens}\label{subsec:funtores-Stone-Clop}

Sea $\Bool$ la categoría de álgebras de Boole y $\Stone$ la de espacios de Stone.

Definimos el funtor contravariante
\[
\mathsf{S}:\Bool\to \Stone
\]
por
\[
\mathsf{S}(B) := X_B,\qquad
\mathsf{S}(\varphi) = \varphi^\ast:X_C\to X_B,\quad
\varphi^\ast(U):=\varphi^{-1}(U),
\]
para $\varphi:B\to C$ homomorfismo de álgebras de Boole.

\begin{lemma}\label{lem:phi-star-ultrafiltro}
Para cualquier homomorfismo $\varphi:B\to C$ y ultrafiltro $U$ de $C$, $\varphi^{-1}(U)$ es un ultrafiltro de $B$, y $\varphi^\ast$ es continua.
\end{lemma}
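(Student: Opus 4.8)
The plan is to split the statement into its two independent claims: first that $\varphi^{-1}(U)$ is an ultrafilter of $B$, and second that the induced map $\varphi^\ast$ is continuous. Both reduce to unwinding the relevant definitions together with the single structural fact that $\varphi$ preserves the Boolean operations $\wedge,\vee,\neg,0,1$.

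For the first claim I would begin by checking the three filter axioms of Definition~\ref{def:filtro}. Since $\varphi(1)=1\in U$ and $\varphi(0)=0\notin U$ (as $U$ is proper), we obtain $1\in\varphi^{-1}(U)$ and $0\notin\varphi^{-1}(U)$. Closure under $\wedge$ follows from the identity $\varphi(a\wedge b)=\varphi(a)\wedge\varphi(b)$ and the fact that $U$ is closed under meets; upward closure follows because homomorphisms preserve order (apply Lemma~\ref{lem:orden-por-meet} twice: $a\leq b$ gives $a\wedge b=a$, hence $\varphi(a)\wedge\varphi(b)=\varphi(a)$, i.e.\ $\varphi(a)\leq\varphi(b)$), and $U$ is upward closed. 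This shows $\varphi^{-1}(U)$ is a filter. To promote it to an ultrafilter I would invoke the characterization of Lemma~\ref{lem:caracterizacion-ultrafiltro}: it suffices to show that for each $a\in B$ exactly one of $a,\neg a$ lies in $\varphi^{-1}(U)$. Here the crucial input is that $\varphi$ preserves complements, so $\varphi(\neg a)=\neg\varphi(a)$; since $U$ is an ultrafilter of $C$, exactly one of $\varphi(a),\neg\varphi(a)$ belongs to $U$, and pulling back along $\varphi$ transports this dichotomy to $a,\neg a$.

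For continuity, I would verify that the preimage of each basic open set is open. The topology on $X_B$ is generated by the clopens $\widehat{a}$, so it is enough to compute $(\varphi^\ast)^{-1}(\widehat{a})$ for each $a\in B$. A point $U\in X_C$ satisfies $\varphi^\ast(U)\in\widehat{a}$ iff $a\in\varphi^{-1}(U)$ iff $\varphi(a)\in U$, which is precisely the condition $U\in\widehat{\varphi(a)}$. Hence $(\varphi^\ast)^{-1}(\widehat{a})=\widehat{\varphi(a)}$, a basic clopen of $X_C$, and continuity follows immediately.

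There is no genuine obstacle in this argument; the entire proof is a routine translation between the algebraic and topological sides, and the only real ingredient beyond the filter axioms is the preservation of complements, which feeds directly into the ultrafilter characterization. The one step worth highlighting is the clean identity $(\varphi^\ast)^{-1}(\widehat{a})=\widehat{\varphi(a)}$, since it not only yields continuity but also exhibits $\varphi^\ast$ as pulling clopens back to clopens — exactly the functorial compatibility that the Stone duality of Section~\ref{sec:dualidad-categorica} will require.
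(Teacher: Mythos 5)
Tu propuesta es correcta y sigue esencialmente el mismo camino que el artículo: verificar los axiomas de filtro, aplicar la caracterización del Lema~\ref{lem:caracterizacion-ultrafiltro} vía la preservación de complementos, y obtener la continuidad de la identidad $(\varphi^\ast)^{-1}(\widehat{a})=\widehat{\varphi(a)}$, que es exactamente la que usa el texto. La única diferencia es de nivel de detalle: el artículo remite a argumentos previos y omite las verificaciones que tú desarrollas explícitamente.
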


\begin{proof}
Ya se argumentó que $\varphi^{-1}(U)$ es filtro propio y satisface la caracterización de ultrafiltro. La continuidad se verifica porque
\[
(\varphi^\ast)^{-1}(\widehat{a}) = \widehat{\varphi(a)}.
\]
\end{proof}

Definimos el funtor contravariante
\[
\mathsf{C}:\Stone\to \Bool
\]
por
\[
\mathsf{C}(X) := \Clop(X),\qquad
\mathsf{C}(f) := f^{-1},
\]
para $f:X\to Y$ continuo.

\subsection{Equivalencia}\label{subsec:stone-duality}

Los Teoremas~\ref{thm:stone-representation} y \ref{thm:X-isom-XClopX} se reinterpretan como naturalidades.

\begin{proposition}\label{prop:nat-iso-Stone-Bool}
Para cada álgebra de Boole $B$, el morfismo $\eta_B:B\to \Clop(X_B)$ es la componente en $B$ de una transformación natural
\[
\eta:\mathrm{Id}_{\Bool}\Rightarrow \mathsf{C}\circ \mathsf{S}.
\]
Para cada espacio de Stone $X$, el homeomorfismo $\Phi_X:X\to X_{\Clop(X)}$ es la componente en $X$ de una transformación natural
\[
\Phi:\mathrm{Id}_{\Stone}\Rightarrow \mathsf{S}\circ \mathsf{C}.
\]
\end{proposition}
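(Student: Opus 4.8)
El plan es verificar directamente los dos cuadrados de naturalidad. Conviene observar primero que el contenido esencial ya está hecho: por el Teorema~\ref{thm:stone-representation} cada $\eta_B$ es un homomorfismo (de hecho isomorfismo) de álgebras de Boole, y por el Teorema~\ref{thm:X-isom-XClopX} cada $\Phi_X$ es continua (de hecho homeomorfismo). Por tanto lo único que resta comprobar es que estas familias de morfismos son \emph{naturales}, es decir, que conmutan con la acción de los funtores sobre morfismos arbitrarios.

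Para $\eta$, fijaría un homomorfismo $\varphi:B\to C$ y escribiría el cuadrado que relaciona $\eta_B$, $\eta_C$, $\varphi$ y $(\mathsf{C}\circ\mathsf{S})(\varphi)=(\varphi^\ast)^{-1}:\Clop(X_B)\to\Clop(X_C)$. La conmutatividad equivale a la identidad $(\varphi^\ast)^{-1}(\widehat{a})=\widehat{\varphi(a)}$ para todo $a\in B$. Ésta ya apareció en la prueba del Lema~\ref{lem:phi-star-ultrafiltro}: desenrollando $\varphi^\ast(U)=\varphi^{-1}(U)$ se obtiene $(\varphi^\ast)^{-1}(\widehat a)=\{U\in X_C:\varphi(a)\in U\}=\widehat{\varphi(a)}$, lo que cierra la naturalidad de $\eta$.

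Para $\Phi$, fijaría una aplicación continua $f:X\to Y$ y notaría que $(\mathsf{S}\circ\mathsf{C})(f)=(f^{-1})^\ast:X_{\Clop(X)}\to X_{\Clop(Y)}$ envía un ultrafiltro $\mathcal U$ de $\Clop(X)$ al conjunto $\{C\in\Clop(Y):f^{-1}(C)\in\mathcal U\}$. La conmutatividad del cuadrado equivale entonces a $(f^{-1})^\ast(U_x)=U_{f(x)}$ para todo $x\in X$, y esto se reduce a la cadena de equivalencias $C\in(f^{-1})^\ast(U_x)\iff f^{-1}(C)\in U_x\iff x\in f^{-1}(C)\iff f(x)\in C\iff C\in U_{f(x)}$.

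El único punto delicado —más un punto de cuidado que una dificultad genuina— es la contabilidad de las preimágenes iteradas: como $\mathsf{S}$ y $\mathsf{C}$ son contravariantes, cada composición $\mathsf{C}\circ\mathsf{S}$ y $\mathsf{S}\circ\mathsf{C}$ es covariante, por lo que los cuadrados de naturalidad llevan la orientación habitual; conviene seguir con atención la dirección de cada flecha al identificar $(\mathsf{C}\circ\mathsf{S})(\varphi)$ y $(\mathsf{S}\circ\mathsf{C})(f)$. Fijada esa orientación, ambas verificaciones se reducen a desenrollar las definiciones, tal como arriba.
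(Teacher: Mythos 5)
Tu propuesta es correcta y sigue exactamente el camino que el texto indica: el artículo se limita a decir que la naturalidad es una verificación directa de los cuadrados conmutativos (remitiendo a Johnstone para los detalles), y tú llevas a cabo esa verificación, reduciéndola a las identidades $(\varphi^\ast)^{-1}(\widehat{a})=\widehat{\varphi(a)}$ y $(f^{-1})^\ast(U_x)=U_{f(x)}$, ambas correctas. De hecho, tu argumento completa los detalles que el texto omite, apoyándose como corresponde en el Lema~\ref{lem:phi-star-ultrafiltro} y cuidando bien la covariancia de las composiciones $\mathsf{C}\circ\mathsf{S}$ y $\mathsf{S}\circ\mathsf{C}$.
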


\begin{proof}
Es una verificación directa de que los diagramas correspondientes conmutan; remitimos a \cite{Johnstone1982} para los detalles.
\end{proof}

\begin{theorem}[Dualidad de Stone]\label{thm:stone-duality-categorical}
Los funtores contravariantes $\mathsf{S}$ y $\mathsf{C}$ establecen una anti-equivalencia de categorías
\[
\Bool \;\simeq\; \Stone^{\mathrm{op}},
\]
y, usando la Proposición~\ref{prop:profinite-stone}, también
\[
\Bool \;\simeq\; \ProFin^{\mathrm{op}}.
\]
\end{theorem}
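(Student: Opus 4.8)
The plan is to recognize that essentially all the analytic content has already been extracted, so that what remains is a purely formal assembly governed by the standard criterion for anti-equivalences: a pair of contravariant functors $F\colon\mathcal A\to\mathcal B$ and $G\colon\mathcal B\to\mathcal A$ forms an anti-equivalence precisely when there exist natural isomorphisms $\mathrm{Id}_{\mathcal A}\cong G\circ F$ and $\mathrm{Id}_{\mathcal B}\cong F\circ G$. Here $F=\mathsf S$ and $G=\mathsf C$, and the candidate natural transformations are exactly $\eta$ and $\Phi$ from Proposición~\ref{prop:nat-iso-Stone-Bool}. Thus the whole proof reduces to checking that $\mathsf S,\mathsf C$ are genuine contravariant functors and that $\eta,\Phi$ are natural isomorphisms.

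First I would confirm functoriality. On objects this is already done: $\mathsf S(B)=X_B$ is a espacio de Stone by Teorema~\ref{thm:XB-Stone}, and $\mathsf C(X)=\Clop(X)$ is a álgebra de Boole (a subálgebra of the power set of Ejemplo~\ref{ex:boolean-examples}, closed under the Boolean operations). On morphisms, Lema~\ref{lem:phi-star-ultrafiltro} shows $\varphi^\ast$ is a well-defined continuous map, while $f^{-1}$ manifestly sends clopens to clopens and preserves the Boolean structure. The only bookkeeping genuinely not yet recorded is the contravariant composition law, $(\psi\circ\varphi)^\ast=\varphi^\ast\circ\psi^\ast$ and $(g\circ f)^{-1}=f^{-1}\circ g^{-1}$, together with preservation of identities; both follow immediately from the elementary behaviour of preimages, $(g\circ f)^{-1}(S)=f^{-1}(g^{-1}(S))$.

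Next I would assemble the natural isomorphisms. By Teorema~\ref{thm:stone-representation} every component $\eta_B\colon B\to\Clop(X_B)=\mathsf C(\mathsf S(B))$ is an isomorphism of álgebras de Boole, and by Teorema~\ref{thm:X-isom-XClopX} every component $\Phi_X\colon X\to X_{\Clop(X)}=\mathsf S(\mathsf C(X))$ is a homeomorphism; Proposición~\ref{prop:nat-iso-Stone-Bool} supplies the naturality of $\eta$ and $\Phi$. Since a natural transformation all of whose components are isomorphisms is automatically a natural isomorphism (the componentwise inverses assemble into a natural transformation with no further checking), we obtain $\mathrm{Id}_{\Bool}\cong\mathsf C\circ\mathsf S$ and $\mathrm{Id}_{\Stone}\cong\mathsf S\circ\mathsf C$. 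By the criterion quoted above this is exactly the data of an anti-equivalence $\Bool\simeq\Stone^{\mathrm{op}}$.

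Finally, for the profinite restatement I would invoke Proposición~\ref{prop:profinite-stone}: a space is profinito if and only if it is a espacio de Stone, and in both $\ProFin$ and $\Stone$ the morphisms are \emph{all} continuous maps. Hence $\ProFin$ and $\Stone$ are the same category, identified by the identity-on-objects functor, and composing this identification with the anti-equivalence already established yields $\Bool\simeq\ProFin^{\mathrm{op}}$. I do not expect a serious obstacle anywhere: the only conceptual care needed is the remark that natural isomorphisms require no separate verification that the inverse is natural, and the only new computation is the routine contravariant composition law for $\mathsf S$ and $\mathsf C$.
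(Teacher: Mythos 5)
Tu propuesta es correcta y coincide con el camino que el artículo deja implícito: el Teorema~\ref{thm:stone-duality-categorical} se presenta sin demostración porque es exactamente el ensamblaje de los Teoremas~\ref{thm:stone-representation} y \ref{thm:X-isom-XClopX} (componentes invertibles) con la Proposición~\ref{prop:nat-iso-Stone-Bool} (naturalidad) y la Proposición~\ref{prop:profinite-stone} (identificación $\Stone=\ProFin$). Tu única adición genuina ---la ley de composición contravariante $(\psi\circ\varphi)^\ast=\varphi^\ast\circ\psi^\ast$ y la observación de que una transformación natural con componentes invertibles es automáticamente un isomorfismo natural--- es precisamente el detalle rutinario que el texto omite, así que no hay ninguna brecha.
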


\begin{remark}\label{rem:morfismos-dualidad}
En esta dualidad, los morfismos en $\Bool$ (homomorfismos de álgebras de Boole)
corresponden biyectivamente a morfismos continuos en $\Stone$: un homomorfismo
$\varphi:B\to C$ induce una aplicación continua $\varphi^\ast:X_C\to X_B$ que
envía cada ultrafiltro $U$ de $C$ al ultrafiltro $\varphi^{-1}(U)$ de $B$. La
continuidad es la contraparte natural de la preservación de operaciones
booleanas: los homomorfismos respetan la estructura algebraica, mientras que las
aplicaciones continuas respetan la estructura topológica codificada por los
clopens.
\end{remark}

\begin{remark}[Paralelo con geometría afín]\label{rem:affine-analogy}
Este resultado es el modelo $0$-dimensional de la anti-equivalencia entre esquemas afines y anillos. Aquí, $\Bool$ juega el papel de \emph{álgebra} y $\Stone$ el de \emph{geometría}, con $\Clop$ como análogo de las funciones regulares y $X_B$ como análogo del espectro de un anillo. Desde este punto de vista, la dualidad de Stone es el caso puramente booleano de la anti\-equivalencia entre álgebra y geometría que subyace a la teoría de esquemas, donde $\Spec(R)$ aparece como ejemplo paradigmático de espacio espectral en el sentido de Hochster \cite{Hochster1969, hartshorne1977}.

\end{remark}

\section{El corazón aritmético: \texorpdfstring{$\Zp$}{Z\_p} y grupos de Galois}\label{sec:ejemplos-aritmeticos}

En esta sección presentamos los ejemplos aritméticos fundamentales que
motivan toda la teoría desarrollada en este artículo. Estos ejemplos ---los
enteros $p$-ádicos y los grupos de Galois--- no son meras ilustraciones, sino
los objetos que justifican la necesidad de trabajar con espacios profinitos y
de Stone en aritmética. Veremos cómo la estructura profinita surge
naturalmente en estos contextos y cómo la dualidad de Stone proporciona una
perspectiva unificada.

\subsection{Los enteros \texorpdfstring{$p$}{p}-ádicos como espacio de Stone}\label{subsec:Zp}

Los enteros $p$-ádicos $\Zp$ son uno de los objetos más importantes en teoría
de números y geometría aritmética. Surgen naturalmente cuando queremos estudiar
congruencias módulo potencias arbitrariamente altas de un primo $p$. 

Formalmente, $\Zp$ se define como el límite inverso
\[
\Zp = \varprojlim_{n\geq 1} \mathbb{Z}/p^n\mathbb{Z},
\]
donde las aplicaciones de transición $\mathbb{Z}/p^{n+1}\mathbb{Z}\to
\mathbb{Z}/p^n\mathbb{Z}$ son las reducciones módulo $p^n$. Un elemento de
$\Zp$ es una sucesión $(a_1, a_2, a_3, \ldots)$ donde $a_n\in\mathbb{Z}/p^n\mathbb{Z}$
y $a_{n+1}\equiv a_n\pmod{p^n}$.

Por la Proposición~\ref{prop:profinite-stone}, $\Zp$ es un espacio de Stone:
es compacto, Hausdorff y totalmente desconectado. Su álgebra de Boole dual
$\Clop(\Zp)$ está generada por las \emph{bolas $p$-ádicas}
\[
a+p^n\Zp = \{x\in \Zp : x\equiv a\pmod{p^n}\},
\]
donde $a\in\mathbb{Z}$ y $n\geq 1$, y sus uniones finitas.

\begin{remark}\label{rem:Zp-aritmetica}
La estructura profinita de $\Zp$ refleja su naturaleza aritmética: cada nivel
$n$ corresponde a la información "módulo $p^n$", y la topología captura
precisamente cómo esta información se filtra a través de los niveles. Esta
estructura aparece constantemente en geometría aritmética, donde los enteros
$p$-ádicos aparecen como anillos de valuación en la teoría de esquemas y como
coeficientes en cohomologías $p$-ádicas.

Desde la perspectiva de la dualidad de Stone, el \emph{Lema de Hensel} ---que
garantiza la existencia y unicidad de soluciones de ecuaciones polinomiales en
$\Zp$ bajo ciertas condiciones de reducción módulo $p$--- puede entenderse como
una propiedad que "vive" en la estructura de Stone de $\Zp$: la información
sobre soluciones módulo $p^n$ se propaga coherentemente a través de los niveles
del sistema proyectivo, reflejando la completitud del álgebra de Boole de
clopens.
\end{remark}

\subsection{Grupos de Galois absolutos}\label{subsec:galois}

Sea $K$ un campo (por ejemplo, $\mathbb{Q}$, $\mathbb{Q}_p$, o un campo de
funciones sobre un cuerpo finito). El \emph{grupo de Galois absoluto} de $K$ es
\[
G_K := \Gal(\overline{K}/K),
\]
donde $\overline{K}$ denota la clausura algebraica de $K$. Este grupo controla
todas las extensiones algebraicas de $K$ y ocupa un lugar central en teoría de números y
geometría aritmética.

El grupo $G_K$ tiene estructura profinita
\[
G_K \cong \varprojlim_{L/K \textit{ finita}} \Gal(L/K),
\]
donde $L$ recorre las extensiones finitas de $K$ y el límite inverso se toma
respecto a las restricciones naturales. Esta estructura profinita es esencial:
la topología profinita de $G_K$ es precisamente la que hace que la
correspondencia de Galois se extienda a extensiones infinitas: los subgrupos
cerrados de $G_K$ corresponden biyectivamente a subcampos intermedios entre $K$
y $\overline{K}$.

\begin{remark}\label{rem:galois-aritmetica}
En geometría aritmética, los grupos de Galois absolutos aparecen
constantemente. Por ejemplo, cuando estudiamos una variedad algebraica $X$
sobre un campo $K$, el grupo $G_K$ actúa sobre los puntos geométricos de $X$
(es decir, puntos con coordenadas en $\overline{K}$), y esta acción codifica
información aritmética profunda sobre $X$. La estructura profinita de $G_K$
permite usar técnicas de teoría de grupos profinitos para estudiar estas
acciones (ver \cite{RibesZalesskii2010} para una exposición detallada de la
teoría de grupos profinitos y su aplicación a grupos de Galois).

Un ejemplo canónico particularmente importante es el grupo de Galois absoluto
de un campo finito $\mathbb{F}_q$: se tiene que $G_{\mathbb{F}_q} \cong
\widehat{\mathbb{Z}}$, donde el generador topológico corresponde al
automorfismo de Frobenius $x\mapsto x^q$. Este isomorfismo muestra cómo los
enteros profinitos aparecen naturalmente en aritmética, conectando la
estructura aditiva de $\widehat{\mathbb{Z}}$ con la estructura multiplicativa
de los grupos de Galois.
\end{remark}

\subsection{Aplicaciones en geometría aritmética}\label{subsec:geometria-aritmetica}

Los espacios profinitos y de Stone aparecen de manera natural en varios
contextos de geometría aritmética:

\begin{enumerate}
  \item \emph{Cohomología $p$-ádica}: Cuando estudiamos la cohomología de
  variedades algebraicas, los enteros $p$-ádicos $\Zp$ aparecen como
  coeficientes. La estructura profinita de $\Zp$ permite desarrollar una teoría
  de cohomología $p$-ádica que combina información de todos los niveles
  módulo $p^n$.
  
  \item \emph{Teoría de representaciones de Galois}: Los grupos de Galois
  absolutos $G_K$ actúan sobre espacios vectoriales, y estas representaciones
  codifican información aritmética. La estructura profinita de $G_K$ permite
  estudiar estas representaciones mediante técnicas de grupos profinitos.
  
  \item \emph{Completaciones y límites inversos}: Muchas construcciones en
  geometría aritmética involucran límites inversos de objetos finitos (por
  ejemplo, sistemas de compatibilidad de cohomologías, torres de extensiones de
  Galois). La teoría de espacios profinitos proporciona el marco natural para
  entender estas construcciones.
\end{enumerate}

Estos ejemplos muestran que los espacios profinitos no son meras curiosidades
topológicas, sino objetos que surgen naturalmente en aritmética y geometría
aritmética. La dualidad de Stone proporciona una perspectiva unificada que
conecta la estructura lógica (álgebras de Boole) con la estructura topológica
(espacios profinitos) y la estructura aritmética (límites inversos de objetos
finitos).

\section{Espacios extremadamente desconectados, completitud y proyectividad}\label{sec:extremally-disconnected}

En las secciones anteriores vimos que las álgebras de Boole y los espacios de Stone forman dos caras de una misma moneda: la lógica se geometriza en forma de espacios compactos, Hausdorff y totalmente desconectados. En esta sección refinamos esta dualidad en tres direcciones:

\begin{enumerate}
  \item Caracterizamos una clase más rígida de espacios de Stone, los \emph{espacios extremadamente desconectados} (ED).
  \item Probamos en detalle que estos corresponden exactamente a las \emph{álgebras de Boole completas}.
  \item Explicamos su papel categórico como objetos \emph{proyectivos} en la categoría $\CHaus$ de compactos Hausdorff, y su relación con la compactificación de Stone--Čech.
\end{enumerate}

Desde el punto de vista narrativo, los espacios ED son el "extremo lógico": son aquellos en los que ninguna información topológica queda fuera del alcance de los clopens. Del lado algebraico, esto se traduce en que la álgebra booleana subyacente no tiene "lagunas": admite supremos e ínfimos de \emph{todas} las familias.

\subsection{Abiertos regulares y completitud booleana}\label{subsec:reg-open}

Sea $X$ un espacio topológico. Para cualquier subconjunto $U\subseteq X$ definimos el operador de \emph{regularización}:
\begin{equation}\label{eq:regularizacion}
  r(U) := \operatorname{int}(\overline{U}),
\end{equation}
donde $\operatorname{int}$ denota el interior y $\overline{U}$ la clausura.

\begin{definition}[Abiertos regulares]\label{def:regular-open}
Un abierto $U\subseteq X$ se llama \textit{regular abierto} si satisface
\[
U = \operatorname{int}\big(\overline{U}\big),
\quad \textit{es decir, } U = r(U).
\]
Denotamos por $\mathrm{RO}(X)$ al conjunto de todos los abiertos regulares de $X$.
\end{definition}

\begin{lemma}\label{lem:RO-BA}
Para todo espacio topológico $X$, el conjunto $\mathrm{RO}(X)$ es una álgebra de Boole con las operaciones
\begin{align}
  U \wedge V &:= U \cap V, \label{eq:RO-meet}\\
  U \vee V &:= r(U \cup V) = \operatorname{int}\big(\overline{U\cup V}\big), \label{eq:RO-join}\\
  U^{*} &:= \operatorname{int}\big(X \setminus \overline{U}\big),\label{eq:RO-comp}
\end{align}
y neutros $0=\emptyset$, $1=X$.
\end{lemma}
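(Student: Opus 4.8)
El plan es reducir todo a un repertorio reducido de propiedades del operador de regularización $r(U)=\operatorname{int}(\overline U)$ y explotar repetidamente dos hechos topológicos elementales: que el interior conmuta con intersecciones finitas, $\operatorname{int}(A\cap B)=\operatorname{int}(A)\cap\operatorname{int}(B)$, y el \emph{lema de solapamiento} para $U$ abierto, $U\cap\overline A\subseteq\overline{U\cap A}$ (cuya prueba es la verificación estándar de vecindarios: si $x\in U\cap\overline A$ y $N$ es un entorno de $x$, entonces $N\cap U$ es un entorno de $x$ que corta a $A$, luego corta a $U\cap A$). Primero comprobaría que las tres operaciones no salen de $\mathrm{RO}(X)$. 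Que $r(U)$ es regular abierto para cualquier abierto $U$, y que $U^\ast=\operatorname{int}(X\setminus\overline U)=\operatorname{int}(X\setminus U)$ es el interior de un cerrado y por ende regular abierto, se siguen de la identidad de Kuratowski $\operatorname{int}\,\overline{\operatorname{int}\,\overline S}=\operatorname{int}\,\overline S$ (es decir, $r\circ r=r$); esto cubre que $\vee=r(\cdot\cup\cdot)$ y ${}^\ast$ permanecen en $\mathrm{RO}(X)$. Que $U\cap V$ es regular abierto cuando $U,V$ lo son se obtiene de $\operatorname{int}(\overline{U\cap V})\subseteq\operatorname{int}(\overline U)\cap\operatorname{int}(\overline V)=U\cap V\subseteq\operatorname{int}(\overline{U\cap V})$, usando monotonía de $r$ y que $U\cap V$ es abierto.

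A partir de ahí verificaría la estructura de retículo. La conmutatividad es inmediata y la asociatividad de $\wedge$ es la de $\cap$. El contenido está en la asociatividad de $\vee$ y en la absorción, y ambas se deducen de la \emph{identidad de colapso}
\[
r\big(r(G)\cup H\big)=r(G\cup H)\qquad(G,H\ \text{abiertos}),
\]
que a su vez sale de $G\subseteq r(G)\subseteq\overline G$ y de la monotonía: la inclusión $\subseteq$ acotando $r(G)\cup H\subseteq\overline{G\cup H}$ y aplicando $\operatorname{int}\,\overline{(\cdot)}$, y la inclusión $\supseteq$ de $G\cup H\subseteq r(G)\cup H$. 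Las leyes de absorción $U\wedge(U\vee V)=U$ y $U\vee(U\wedge V)=U$ se reducen entonces a $U\subseteq r(U\cup V)$ (porque $U$ es abierto y está contenido en $U\cup V$) y a $r(U)=U$ (regularidad de $U$), respectivamente.

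El paso que espero más delicado es la distributividad $U\wedge(V\vee W)=(U\wedge V)\vee(U\wedge W)$. La estrategia es reescribir el lado izquierdo como $U\cap\operatorname{int}(\overline{V\cup W})$ y el derecho como $\operatorname{int}\big(\overline{U\cap(V\cup W)}\big)$ (usando la absorción ya probada y que $\cap$ distribuye sobre $\cup$ dentro del argumento de $r$), y demostrar su igualdad. La inclusión $\subseteq$ es donde interviene de manera esencial el lema de solapamiento: como $U$ es abierto,
\[
U\cap\operatorname{int}(\overline{V\cup W})\subseteq U\cap\overline{V\cup W}\subseteq\overline{U\cap(V\cup W)},
\]
y tomando interior (el miembro izquierdo es abierto) se concluye. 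La inclusión $\supseteq$ es formal: $\overline{U\cap(V\cup W)}\subseteq\overline U\cap\overline{V\cup W}$ y, tomando interior y usando $\operatorname{int}(\overline U)=U$, se obtiene $\operatorname{int}(\overline{U\cap(V\cup W)})\subseteq U\cap\operatorname{int}(\overline{V\cup W})$.

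Para cerrar, $0=\varnothing$ y $1=X$ son regular abiertos y actúan como mínimo y máximo. Quedan las leyes de complemento. Como $U^\ast=\operatorname{int}(X\setminus\overline U)=X\setminus\overline U$, la ley $U\wedge U^\ast=\varnothing$ es inmediata porque $U\subseteq\overline U$. La ley $U\vee U^\ast=X$ es el otro punto con contenido: hay que ver que $r\big(U\cup(X\setminus\overline U)\big)=X$, lo que equivale a que $U\cup(X\setminus\overline U)=X\setminus\partial U$ sea denso, es decir, a que $\partial U$ tenga interior vacío; esto vale para todo abierto $U$ porque $\operatorname{int}(\partial U)\subseteq\operatorname{int}(\overline U)\cap(X\setminus\overline U)=\varnothing$. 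Con la distributividad, las leyes de retículo y las de complemento ya verificadas, $(\mathrm{RO}(X),\wedge,\vee,{}^\ast,\varnothing,X)$ es un álgebra de Boole.
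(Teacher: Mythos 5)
Tu propuesta es correcta y, en la parte de clausura de $\mathrm{RO}(X)$ bajo las tres operaciones, coincide esencialmente con la prueba del artículo (la cadena $\operatorname{int}(\overline{U\cap V})\subseteq\operatorname{int}(\overline{U})\cap\operatorname{int}(\overline{V})=U\cap V\subseteq\operatorname{int}(\overline{U\cap V})$ es literalmente la misma). La diferencia real está en lo que viene después: el artículo despacha la asociatividad, la absorción, la distributividad y las leyes de complemento con un ``dejamos los detalles al lector'', dando sólo el esbozo de $U\wedge U^{*}=0$ y $U\vee U^{*}=1$, mientras que tú sí llevas a cabo esas verificaciones. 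Tus dos herramientas --- la identidad de colapso $r(r(G)\cup H)=r(G\cup H)$ para abiertos, que da de golpe la asociatividad de $\vee$ y la absorción, y el lema de solapamiento $U\cap\overline{A}\subseteq\overline{U\cap A}$ para $U$ abierto, que es exactamente el ingrediente no formal de la distributividad --- son las correctas, y tus argumentos para ambas son válidos (comprobé en particular la doble inclusión en $U\cap\operatorname{int}(\overline{V\cup W})=\operatorname{int}(\overline{U\cap(V\cup W)})$ y el paso $\operatorname{int}(\partial U)=\varnothing$ que da $U\vee U^{*}=X$). También haces explícito, vía la idempotencia de Kuratowski $r\circ r=r$, por qué $r(U\cup V)$ y $\operatorname{int}(X\setminus\overline{U})$ son regulares abiertos, punto que el artículo afirma sin justificar. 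En resumen: misma estrategia global, pero tu versión es una demostración completa donde la del artículo es un esquema; lo único que podrías añadir por pulcritud es la observación de que basta una de las dos leyes distributivas (la otra se deduce en todo retículo), que es la convención que adopta la Definición~\ref{def:reticulo-distributivo}.
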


\begin{proof}
Comprobemos primero que $\mathrm{RO}(X)$ es estable bajo estas operaciones.

\smallskip

\emph{Cierre bajo intersección.} Sean $U,V\in \mathrm{RO}(X)$. Entonces $U$ y $V$ son abiertos, luego $U\cap V$ es abierto. Además,
\[
\overline{U\cap V} \subseteq \overline{U}\cap \overline{V},
\]
por lo que
\[
\operatorname{int}\big(\overline{U\cap V}\big) 
\subseteq \operatorname{int}\big(\overline{U}\cap \overline{V}\big)
\subseteq \operatorname{int}(\overline{U}) \cap \operatorname{int}(\overline{V})
= U \cap V.
\]
Por otro lado, siempre se tiene $U\cap V \subseteq \overline{U\cap V}$, así que $U\cap V \subseteq \operatorname{int}(\overline{U\cap V})$. Concluimos
\[
U\cap V = \operatorname{int}\big(\overline{U\cap V}\big),
\]
es decir, $U\cap V$ es regular abierto.

\smallskip

\emph{Cierre bajo $U\vee V$.} Si $U,V$ son regulares, entonces $U\cup V$ es abierto y por definición $U\vee V$ es el interior de su clausura, de modo que $U\vee V\in\mathrm{RO}(X)$.

\smallskip

\emph{Cierre bajo complemento $^{*}$.} Si $U\in \mathrm{RO}(X)$, entonces $X\setminus \overline{U}$ es abierto (complemento de cerrado), y $U^{*}$ es el interior de ese abierto, luego $U^{*}$ es regular abierto.

\smallskip

Las propiedades de álgebra de Boole (asociatividad, conmutatividad, leyes de absorción y distributividad) se verifican a partir de las propiedades estándar de interior, clausura y las operaciones de conjuntos. Por ejemplo, la ley de complementación $U\wedge U^{*} = 0$ proviene de que $U\subseteq \overline{U}$ y $U^{*}\subseteq X\setminus \overline{U}$; la ley $U\vee U^{*} = 1$ se deduce de que
\[
\overline{U\cup U^{*}} = X
\]
y, por tanto, $\operatorname{int}(\overline{U\cup U^{*}})=X$. Dejamos los detalles al lector, pues en su mayoría son verificaciones elementales.
\end{proof}

El siguiente resultado es el punto clave para la completitud.

\begin{proposition}\label{prop:RO-completa}
Para todo espacio topológico $X$, la álgebra de Boole $\mathrm{RO}(X)$ es \textit{completa}: todo subconjunto $\mathcal{U}\subseteq \mathrm{RO}(X)$ admite supremo e ínfimo en $\mathrm{RO}(X)$.
Más precisamente:
\begin{align}
  \bigwedge_{U\in \mathcal{U}} U &= \bigcap_{U\in \mathcal{U}} U,\label{eq:RO-inf}\\
  \bigvee_{U\in \mathcal{U}} U &= r\!\Big(\bigcup_{U\in \mathcal{U}} U\Big) 
  = \operatorname{int}\Big(\overline{\bigcup_{U\in \mathcal{U}} U}\Big). \label{eq:RO-sup}
\end{align}
\end{proposition}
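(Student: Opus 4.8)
El plan es establecer la completitud construyendo explícitamente supremos e ínfimos y verificando que son, respectivamente, la menor cota superior y la mayor cota inferior en $(\mathrm{RO}(X),\subseteq)$. La herramienta central es el operador de regularización $r=\operatorname{int}\circ\,\overline{(\cdot)}$ de \eqref{eq:regularizacion}, del que conviene aislar primero tres propiedades elementales: (i) $r$ es monótono; (ii) para todo abierto $U$ se tiene $U\subseteq r(U)$; y (iii) $r(U)$ es regular abierto para todo abierto $U$, es decir $r(r(U))=r(U)$. La propiedad (iii) es la única que requiere un pequeño cálculo: para $U$ abierto se comprueba $\overline{\operatorname{int}(\overline U)}=\overline U$ a partir de $U\subseteq\operatorname{int}(\overline U)\subseteq\overline U$, y de ahí $r(r(U))=\operatorname{int}(\overline U)=r(U)$.

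Con estas propiedades probaría primero la fórmula del supremo \eqref{eq:RO-sup}. Dado $\mathcal U\subseteq\mathrm{RO}(X)$, pongo $S:=r\bigl(\bigcup_{U\in\mathcal U}U\bigr)$. Por (iii), $S\in\mathrm{RO}(X)$. Es cota superior: cada $U\in\mathcal U$ es abierto, luego $U\subseteq\bigcup_{U\in\mathcal U}U\subseteq S$ por (ii). Y es la menor: si $V\in\mathrm{RO}(X)$ cumple $U\subseteq V$ para todo $U\in\mathcal U$, entonces $\bigcup_{U\in\mathcal U}U\subseteq V$ y, por monotonía (i) y por ser $V$ regular, $S=r\bigl(\bigcup_{U\in\mathcal U}U\bigr)\subseteq r(V)=V$. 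Esto da $S=\bigvee_{U\in\mathcal U}U$, incluyendo el caso $\mathcal U=\varnothing$, donde $S=r(\varnothing)=\varnothing=0$.

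Para el ínfimo, la fórmula \emph{correcta} es $\bigwedge_{U\in\mathcal U}U=\operatorname{int}\bigl(\bigcap_{U\in\mathcal U}U\bigr)$, y conviene advertir que la intersección desnuda $\bigcap_{U\in\mathcal U}U$ no es en general ni abierta ni regular (por ejemplo, $U_n=(-1/n,1/n)$ en $\mathbb R$ tiene intersección $\{0\}$), por lo que hay que tomar interior; es decir, \eqref{eq:RO-inf} debe entenderse con un $\operatorname{int}$. Pondría $W:=\operatorname{int}\bigl(\bigcap_{U\in\mathcal U}U\bigr)$. Que $W$ es regular abierto se sigue de un argumento corto: $W$ es abierto, así que $W\subseteq r(W)$ por (ii); y como $W\subseteq U$ para cada $U\in\mathcal U$, la monotonía (i) con la regularidad de $U$ dan $r(W)\subseteq r(U)=U$, luego $r(W)\subseteq\bigcap_{U\in\mathcal U}U$, y siendo $r(W)$ abierto, $r(W)\subseteq\operatorname{int}\bigl(\bigcap_{U\in\mathcal U}U\bigr)=W$; por tanto $W=r(W)\in\mathrm{RO}(X)$. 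Es claramente cota inferior, y la mayor, pues toda cota inferior regular $V$ es abierta y contenida en $\bigcap_{U\in\mathcal U}U$, de donde $V\subseteq W$. Alternativamente, uno obtiene el ínfimo del supremo ya probado vía De Morgan, $\bigwedge_{U\in\mathcal U}U=\bigl(\bigvee_{U\in\mathcal U}U^{*}\bigr)^{*}$, con el complemento $^{*}$ de \eqref{eq:RO-comp}, evitando repetir el argumento.

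El principal obstáculo —y el punto exacto donde la fórmula ingenua \eqref{eq:RO-inf} se rompe— es el ínfimo: lo no trivial no es exhibir una cota, sino el lema de que $\operatorname{int}\bigl(\bigcap_{U\in\mathcal U}U\bigr)$ \emph{ya} es regular abierto, hecho que descansa enteramente en la propiedad (iii) de $r$ y en su monotonía. El resto (supremo y las verificaciones de cota) es rutinario una vez fijadas las propiedades de $r$.
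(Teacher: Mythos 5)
Tu demostración es correcta y, en la mitad del supremo, coincide esencialmente con la del artículo: ambos tomáis $r\bigl(\bigcup_{U\in\mathcal U}U\bigr)$, comprobáis que es regular abierto y que es la menor cota superior usando la monotonía de $r$ y la regularidad de las cotas superiores. La diferencia sustancial está en el ínfimo, y ahí el que tiene razón eres tú: la prueba del artículo afirma que $V:=\bigcap_{U\in\mathcal U}U$ ``es intersección arbitraria de abiertos, luego es abierto'', lo cual es falso para familias infinitas; tu ejemplo $U_n=(-1/n,1/n)$ en $\mathbb R$ lo refuta, pues cada $U_n$ es regular abierto pero la intersección $\{0\}$ ni siquiera es abierta (y el ínfimo correcto en $\mathrm{RO}(\mathbb R)$ es $\varnothing$). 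En consecuencia la fórmula \eqref{eq:RO-inf} debe leerse como $\bigwedge_{U\in\mathcal U}U=\operatorname{int}\bigl(\bigcap_{U\in\mathcal U}U\bigr)$, y tu verificación de que ese interior ya es regular abierto ---$W\subseteq r(W)$ por ser abierto, $r(W)\subseteq r(U)=U$ para cada $U$ por monotonía y regularidad, luego $r(W)\subseteq\operatorname{int}\bigl(\bigcap_{U\in\mathcal U}U\bigr)=W$--- es exactamente el lema que le falta al artículo. Obsérvese que la mitad $\operatorname{int}(\overline V)\subseteq V$ del argumento del texto sí es válida; la que falla es la inclusión recíproca $V\subseteq\operatorname{int}(\overline V)$, que requiere que $V$ sea abierto. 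Para familias finitas ambas fórmulas coinciden (caso ya cubierto por el Lema~\ref{lem:RO-BA}), de modo que la conclusión de completitud sobrevive con tu corrección. Tu vía alternativa por De Morgan, $\bigwedge_{U\in\mathcal U}U=\bigl(\bigvee_{U\in\mathcal U}U^{*}\bigr)^{*}$, también es legítima una vez probado el supremo, pues $^{*}$ es una involución que invierte el orden en $\mathrm{RO}(X)$.
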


\begin{proof}
Sea $\mathcal{U}\subseteq \mathrm{RO}(X)$.

\smallskip

\emph{Ínfimo.} El conjunto $V := \bigcap_{U\in\mathcal{U}} U$ es intersección arbitraria de abiertos, luego es abierto. Mostremos que $V$ es regular.

Por un lado, siempre se tiene $V\subseteq \operatorname{int}(\overline{V})$. Por otro, como
\[
\overline{V} = \overline{\bigcap_{U\in\mathcal{U}} U}
\subseteq \bigcap_{U\in\mathcal{U}} \overline{U},
\]
obtenemos
\[
\operatorname{int}(\overline{V}) 
\subseteq \operatorname{int}\Big(\bigcap_{U\in\mathcal{U}} \overline{U}\Big)
\subseteq \bigcap_{U\in\mathcal{U}} \operatorname{int}(\overline{U})
= \bigcap_{U\in\mathcal{U}} U = V.
\]
Por tanto $\operatorname{int}(\overline{V}) = V$, así que $V$ es regular abierto y pertenece a $\mathrm{RO}(X)$.

Si $W\in\mathrm{RO}(X)$ está contenido en todos los $U\in\mathcal{U}$, entonces $W\subseteq V$. De aquí se deduce que $V$ es el ínfimo de $\mathcal{U}$.

\smallskip

\emph{Supremo.} Sea $U_0 := \bigcup_{U\in\mathcal{U}} U$, que es abierto, y definamos $W:= r(U_0) = \operatorname{int}(\overline{U_0})$. Entonces $W\in\mathrm{RO}(X)$. Claramente $U\subseteq U_0 \subseteq \overline{U_0}$ para todo $U\in \mathcal{U}$, de modo que $U\subseteq W$ (pues $W$ es un abierto que contiene a $U_0$). Por tanto $W$ es una cota superior de $\mathcal{U}$.

Si $Z\in \mathrm{RO}(X)$ es otra cota superior de $\mathcal{U}$, entonces $U\subseteq Z$ para todo $U\in\mathcal{U}$, y en consecuencia $U_0\subseteq Z$. De aquí se deduce $\overline{U_0}\subseteq \overline{Z}$. Pero $Z$ es regular, luego $Z=\operatorname{int}(\overline{Z})$, y se tiene
\[
W = \operatorname{int}(\overline{U_0}) \subseteq \operatorname{int}(\overline{Z}) = Z.
\]
Por tanto, $W$ es el supremo de $\mathcal{U}$ en $\mathrm{RO}(X)$.
\end{proof}

\begin{remark}\label{rem:Clop-subalgebra-RO}
Si $X$ es Hausdorff y totalmente desconectado (en particular, si es un espacio de Stone), entonces $\Clop(X)$ es un subálgebra de Boole de $\mathrm{RO}(X)$: todo clopen es regular abierto, porque para $C$ clopen se tiene $\overline{C}=C$ y $\operatorname{int}(\overline{C})=C$.
En este contexto, $\mathrm{RO}(X)$ puede interpretarse como una \emph{completación} booleana de la álgebra de clopens $\Clop(X)$.
\end{remark}

\subsection{Espacios ED y completitud de los clopens}\label{subsec:ED-Clop-completa}

Pasamos ahora a la noción central.

\begin{definition}[Espacio extremadamente desconectado]\label{def:ED}
Un espacio topológico $X$ se dice \textit{extremadamente desconectado} (ED) si para todo abierto $U\subseteq X$, su clausura $\overline{U}$ es también un abierto (y en particular, clopen).
\end{definition}

Todo espacio ED es totalmente desconectado, pero la recíproca no vale: el conjunto de Cantor, por ejemplo, es un espacio de Stone que \emph{no} es ED.

En el caso de espacios de Stone, la relación con la completitud de la álgebra de clopens se expresa de la forma siguiente.

\begin{theorem}\label{thm:ED-Clop-completa}
Sea $X$ un espacio de Stone. Entonces son equivalentes:
\begin{enumerate}[label=\roman*)]
  \item $X$ es extremadamente desconectado.
  \item La álgebra de Boole $\Clop(X)$ es completa.
\end{enumerate}
\end{theorem}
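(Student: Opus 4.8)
El plan es probar las dos implicaciones por separado, recordando que una álgebra de Boole es completa si y sólo si todo subconjunto admite supremo (los ínfimos se obtienen entonces por complementación mediante las leyes de De Morgan). Así, en ambas direcciones bastará controlar supremos de familias de clopens y relacionarlos con las operaciones topológicas (clausura, interior) de $X$, apoyándome en que $X$ es de Stone y, por tanto, $0$-dimensional.

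Para $(i)\Rightarrow(ii)$, dada una familia $\{C_i\}_{i\in I}$ en $\Clop(X)$, pondré $V:=\bigcup_i C_i$, que es abierto. Como $X$ es ED, $\overline{V}$ es clopen. Afirmo que $\overline{V}=\bigvee_i C_i$ en $\Clop(X)$: es cota superior porque $C_i\subseteq V\subseteq\overline{V}$, y si $E\in\Clop(X)$ cumple $C_i\subseteq E$ para todo $i$, entonces $V\subseteq E$ y, por ser $E$ cerrado, $\overline{V}\subseteq E$. Luego $\overline{V}$ es la mínima cota superior clopen. Esto proporciona todos los supremos y, por complementación, todos los ínfimos, de modo que $\Clop(X)$ es completa.

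La dirección $(ii)\Rightarrow(i)$ es la interesante. Dado un abierto $U\subseteq X$, quiero ver que $\overline{U}$ es abierto (ya sé que es cerrado). Considero la familia $\mathcal{C}:=\{C\in\Clop(X):C\subseteq U\}$ y, usando la completitud, su supremo $D:=\bigvee\mathcal{C}$ en $\Clop(X)$, que es clopen. Como los clopens forman base de $X$ (Lema~\ref{lem:clopens-separan} y $0$-dimensionalidad), se tiene $U=\bigcup\mathcal{C}$; por tanto $U\subseteq D$ y, al ser $D$ cerrado, $\overline{U}\subseteq D$. Para la inclusión inversa tomaré $x\in D$ y supondré $x\notin\overline{U}$; entonces existe un clopen $N\ni x$ con $N\cap U=\varnothing$. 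Cada $C\in\mathcal{C}$ satisface $C\subseteq U$, de modo que $C\cap N=\varnothing$ y $C\subseteq D\setminus N$; así $D\setminus N$ es una cota superior clopen de $\mathcal{C}$, y la minimalidad de $D$ fuerza $D\subseteq D\setminus N$, es decir $D\cap N=\varnothing$, contradiciendo $x\in D\cap N$. Por tanto $D\subseteq\overline{U}$ y $\overline{U}=D$ resulta clopen, luego $X$ es ED.

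El obstáculo principal está en la implicación $(ii)\Rightarrow(i)$, y más precisamente en el paso $D\subseteq\overline{U}$: no puedo usar directamente a $\overline{U}$ como cota superior, pues todavía no sé que sea clopen (¡es justo lo que quiero concluir!). El argumento debe entonces fabricar la cota clopen auxiliar $D\setminus N$ y explotar la minimalidad del supremo junto con la separación por clopens propia de los espacios de Stone. Conviene además dejar explícito, para conectar con la Proposición~\ref{prop:RO-completa} y la Observación~\ref{rem:Clop-subalgebra-RO}, que en general los supremos calculados en $\Clop(X)$ y en $\mathrm{RO}(X)$ no coinciden (el supremo en $\mathrm{RO}(X)$ es $\operatorname{int}(\overline{V})$), y que es precisamente la condición ED, bajo la cual $\overline{V}=\operatorname{int}(\overline{V})$, la que los iguala; esto aclara por qué la completitud \emph{automática} de $\mathrm{RO}(X)$ no basta por sí sola para garantizar la de $\Clop(X)$.
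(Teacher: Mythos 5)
Tu propuesta es correcta. La implicación $(ii)\Rightarrow(i)$ sigue en esencia el mismo camino que el texto: ambos toman la familia $\mathcal{C}=\{C\in\Clop(X):C\subseteq U\}$, su supremo clopen, y usan la separación por clopens para encerrar ese supremo entre $U$ y $\overline{U}$; tu variante con la cota auxiliar $D\setminus N$ y la minimalidad del supremo es equivalente al paso del texto, que en lugar de ello acota $D\subseteq X\setminus C$ para todo $C$ y aplica De Morgan para concluir $D\subseteq X\setminus C_0$. Donde sí difieres genuinamente es en $(i)\Rightarrow(ii)$: el texto pasa por la maquinaria de los abiertos regulares (Lema~\ref{lem:RO-BA}, Proposición~\ref{prop:RO-completa} y Observación~\ref{rem:Clop-subalgebra-RO}), demostrando que bajo ED todo abierto regular es clopen y que por tanto $\mathrm{RO}(X)=\Clop(X)$ hereda la completitud; tú, en cambio, exhibes directamente el supremo de $\{C_i\}$ como $\overline{\bigcup_i C_i}$, que es clopen por la hipótesis ED, y verificas a mano que es la mínima cota superior. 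Tu argumento es más elemental y autocontenido (no necesita la Proposición~\ref{prop:RO-completa}), mientras que el del texto sitúa el resultado dentro de la completación booleana $\mathrm{RO}(X)$ y prepara la identificación $\mathrm{RO}(X)=\Clop(X)$ como fenómeno estructural. Tu observación final ---que el supremo en $\mathrm{RO}(X)$ es $\operatorname{int}(\overline{V})$, que en general difiere del calculado en $\Clop(X)$, y que la condición ED es precisamente la que los iguala--- es acertada y explica por qué la completitud automática de $\mathrm{RO}(X)$ no se transfiere gratuitamente a la subálgebra $\Clop(X)$; también identificas correctamente el punto delicado de $(ii)\Rightarrow(i)$, a saber, que no puede usarse $\overline{U}$ como cota superior antes de saber que es clopen.
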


\begin{proof}
Recordemos que $X$ es compacto, Hausdorff y totalmente desconectado, y que los clopens forman una base de la topología.

\smallskip

\emph{(i) $\Rightarrow$ (ii).} Supongamos que $X$ es ED.
Por la Observación~\ref{rem:Clop-subalgebra-RO}, sabemos que $\Clop(X)$ es un subálgebra de $\mathrm{RO}(X)$. Por la Proposición~\ref{prop:RO-completa}, $\mathrm{RO}(X)$ es una álgebra de Boole completa. Mostraremos que, bajo la hipótesis ED, toda abierto regular es de hecho clopen; esto implica $\mathrm{RO}(X)=\Clop(X)$ y por tanto completitud de $\Clop(X)$.

Sea $U\in \mathrm{RO}(X)$ un abierto regular. Entonces $U$ es abierto, y por ser $X$ ED, su clausura $\overline{U}$ es también abierta. Tenemos
\[
U = \operatorname{int}(\overline{U})
\]
por regularidad; pero si $\overline{U}$ es abierta, entonces $\operatorname{int}(\overline{U}) = \overline{U}$. Por tanto,
\[
U = \overline{U},
\]
y $U$ es simultáneamente abierto y cerrado, es decir, clopen. Así, $\mathrm{RO}(X)\subseteq \Clop(X)$, y por la Observación~\ref{rem:Clop-subalgebra-RO} también $\Clop(X)\subseteq \mathrm{RO}(X)$, de modo que $\mathrm{RO}(X)=\Clop(X)$.

Como $\mathrm{RO}(X)$ es completa, $\Clop(X)$ también lo es.

\smallskip

\emph{(ii) $\Rightarrow$ (i).}
Supongamos ahora que $\Clop(X)$ es una álgebra de Boole completa.
Sea $U\subseteq X$ un abierto cualquiera y consideremos el conjunto
\[
  \mathcal{C} := \{\, C\in\Clop(X) : C\subseteq U \,\}.
\]
Por completitud, el supremo
\[
  C_0 := \bigvee_{C\in\mathcal{C}} C
\]
existe en $\Clop(X)$ y es, por tanto, un clopen de $X$.

En primer lugar, $U\subseteq C_0$. En efecto, si $x\in U$, como los clopens forman una base de la topología, existe $C_x\in\Clop(X)$ con
\[
x\in C_x\subseteq U,
\]
así que $C_x\in\mathcal{C}$ y, por definición del supremo, $C_x\subseteq C_0$. Por tanto $x\in C_0$.

En segundo lugar, $C_0\subseteq \overline{U}$. Si $x\notin \overline{U}$, existe un clopen $D$ tal que
\[
x\in D\subseteq X\setminus\overline{U}.
\]
Entonces $D$ es disjunto de $U$, y en particular de cada $C\in\mathcal{C}$. Esto implica $D\subseteq X\setminus C$ para todo $C\in\mathcal{C}$, de modo que
\[
D\subseteq \bigwedge_{C\in\mathcal{C}} (X\setminus C).
\]
Por las leyes de De Morgan en álgebras de Boole completas, tenemos $\bigwedge_{C\in\mathcal{C}} (X\setminus C) = X\setminus \bigvee_{C\in\mathcal{C}} C = X\setminus C_0$, de donde
\[
D\subseteq X\setminus C_0.
\]
En particular $x\notin C_0$. Hemos mostrado que
\[
X\setminus \overline{U} \subseteq X\setminus C_0,
\]
es decir, $C_0\subseteq \overline{U}$.

Resumiendo, $U\subseteq C_0\subseteq \overline{U}$ y $C_0$ es clopen. Como $\overline{U}$ es la menor cerrado que contiene a $U$, y $C_0$ es cerrado (al ser clopen) conteniendo a $U$, se tiene $\overline{U}\subseteq C_0$. Por tanto
\[
\overline{U} = C_0,
\]
que es abierto. Esto demuestra que la clausura de todo abierto es abierta, es decir, que $X$ es extremadamente desconectado.
\end{proof}

\begin{definition}[Operador gránulo]\label{def:operador-gránulo}
Sea $X$ un espacio de Stone y sea $B := \Clop(X)$ su álgebra de Boole de clopens. Dada una subálgebra booleana finita $B_0 \subseteq B$, denotemos por
\[
\mathrm{At}(B_0) = \{A_1,\dots,A_n\}
\]
su conjunto finito de átomos, que interpretamos (siguiendo la Observación~\ref{rem:granulos}) como los \emph{gránulos asociados} a $B_0$. Definimos el \textit{operador gránulo} asociado a $B_0$ como la aplicación
\[
\mathsf{Ar}_{B_0} : \mathcal{P}(X) \longrightarrow B,\qquad
\mathsf{Ar}_{B_0}(E) := \bigcup_{\substack{1\leq i\leq n \\ A_i \subseteq \operatorname{int}(\overline{E})}} A_i.
\]
\end{definition}

\begin{remark}\label{rem:prop-gránulo}
El operador $\mathsf{Ar}_{B_0}$ tiene tres propiedades elementales pero conceptualmente útiles:
\begin{enumerate}[label=\roman*)]
\item Es \emph{monótono}: si $E\subseteq F\subseteq X$, entonces $\mathsf{Ar}_{B_0}(E)\subseteq \mathsf{Ar}_{B_0}(F)$.
\item Si $U\in B_0$ es clopen, entonces $\mathsf{Ar}_{B_0}(U)$ es la mayor unión de átomos de $B_0$ contenida en $U$, y coincide con $U$ siempre que $U$ sea unión de átomos de $B_0$ (lo cual ocurre, por ejemplo, si $U$ es uno de los propios átomos).
\item Para un abierto regular $U\subseteq X$, el conjunto $\mathsf{Ar}_{B_0}(U)$ es una aproximación finita, booleana, del abierto $U$ vista con la resolución determinada por la subálgebra $B_0$: se obtiene tomando todos los gránulos que quepan por completo dentro de la "envoltura regular" $r(U)=\operatorname{int}(\overline{U})$.
\end{enumerate}
Geométricamente, puede pensarse en $\mathsf{Ar}_{B_0}$ como un operador que deja caer un puñado finito de \emph{gránulos} sobre el espacio $X$ y registra qué granos quedan completamente contenidos en la región que queremos aproximar. Al refinar $B_0$ (es decir, al considerar subálgebras finitas cada vez más grandes), los gránulos se hacen más pequeños y la aproximación de los abiertos regulares por uniones de átomos se vuelve más precisa, reflejando de manera finita la completitud booleana del espacio extremadamente desconectado.
\end{remark}

Del lado algebraico, esto se traduce así:

\begin{corollary}\label{cor:Boole-completa-ED}
Sea $B$ un álgebra de Boole y $X_B$ su espacio de Stone. Entonces:
\[
B \textit{ es completa } \iff X_B \textit{ es extremadamente desconectado}.
\]
\end{corollary}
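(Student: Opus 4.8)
El plan es deducir el corolario de forma directa combinando el teorema de representación de Stone con el Teorema~\ref{thm:ED-Clop-completa}, sin repetir ningún argumento topológico. La observación de fondo es que este enunciado no es más que la traducción, al lenguaje del álgebra $B$, de la equivalencia ya probada a nivel del espacio $X_B$; el trabajo consiste en transportar la noción de completitud a través de la dualidad.

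Primero notaría que $X_B$ es un espacio de Stone por el Teorema~\ref{thm:XB-Stone}, de modo que el Teorema~\ref{thm:ED-Clop-completa} se aplica con $X=X_B$ y nos da la equivalencia
\[
X_B \text{ es extremadamente desconectado} \iff \Clop(X_B) \text{ es completa}.
\]
Resta únicamente conectar la completitud de $\Clop(X_B)$ con la de $B$. Para ello invoco el teorema de representación~\ref{thm:stone-representation}: el operador arenilla $\eta_B:B\to\Clop(X_B)$ es un isomorfismo de álgebras de Boole.

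El único punto que requiere cuidado ---y que constituye el \emph{obstáculo}, aunque menor--- es justificar que la completitud se transporta a lo largo de un isomorfismo de álgebras de Boole. Esto se sigue de que $\eta_B$ es, en particular, un isomorfismo de orden: por el Lema~\ref{lem:orden-por-meet} el orden se recupera de $\wedge$, y como $\eta_B$ preserva $\wedge$ y es biyectivo, preserva y refleja $\leq$. Un isomorfismo de orden transporta supremos e ínfimos de familias arbitrarias (pues estos son caracterizables puramente en términos de $\leq$), de modo que $B$ admite todos sus supremos e ínfimos si y sólo si $\Clop(X_B)$ los admite. Encadenando las dos equivalencias se obtiene
\[
B \text{ completa} \iff \Clop(X_B) \text{ completa} \iff X_B \text{ extremadamente desconectado},
\]
que es exactamente el enunciado. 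No anticipo dificultades adicionales: toda la carga técnica ya está contenida en el Teorema~\ref{thm:ED-Clop-completa}, y aquí solo se trata de leerlo a través del isomorfismo $\eta_B$.
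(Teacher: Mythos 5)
Tu propuesta es correcta y sigue exactamente el mismo camino que el texto: combinar el isomorfismo $\eta_B:B\cong\Clop(X_B)$ del Teorema~\ref{thm:stone-representation} con el Teorema~\ref{thm:ED-Clop-completa} aplicado a $X=X_B$. El único añadido es que explicitas por qué la completitud se transporta a lo largo de un isomorfismo de álgebras de Boole (vía el Lema~\ref{lem:orden-por-meet}), detalle que el texto deja implícito pero que es un complemento razonable.
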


\begin{proof}
Por el Teorema de Representación de Stone, tenemos un isomorfismo booleano $B \cong \Clop(X_B)$. El resultado se deduce aplicando el Teorema~\ref{thm:ED-Clop-completa} a $X=X_B$.
\end{proof}

\begin{remark}[Subcategorías duales]\label{rem:CBool-EDStone}
Denotemos por $\mathbf{CBool}$ la categoría de álgebras de Boole completas y homomorfismos que preservan supremos e ínfimos arbitrarios, y por $\mathbf{EDStone}$ la subcategoría plena de $\Stone$ formada por los espacios de Stone extremadamente desconectados. El Corolario~\ref{cor:Boole-completa-ED} muestra que la dualidad de Stone se restringe a una anti-equivalencia
\[
\mathbf{CBool} \;\simeq\; \mathbf{EDStone}^{\mathrm{op}}.
\]
Esta es la versión "completa" de la dualidad de Stone: la completitud lógica (supremos infinitos) en el lado algebraico se refleja en la rigidez extrema de la topología en el lado geométrico.
\end{remark}

\subsection{Compactificación de Stone--Čech y representabilidad}\label{subsec:betaS-ED-repr}

Consideremos ahora un conjunto $S$ con la topología discreta. Su álgebra de Boole natural es la potencia $\mathcal{P}(S)$, que es una álgebra de Boole completa: uniones e intersecciones arbitrarias de subconjuntos siguen siendo subconjuntos.

Por la dualidad de Stone, el espacio de Stone de $\mathcal{P}(S)$ es un espacio compacto Hausdorff totalmente desconectado, que identificamos con la compactificación de Stone--Čech $\beta S$. El Corolario~\ref{cor:Boole-completa-ED} implica inmediatamente:

\begin{proposition}\label{prop:betaS-ED}
Sea $S$ un conjunto discreto. Entonces su compactificación de Stone--Čech $\beta S$ es un espacio extremadamente desconectado.
\end{proposition}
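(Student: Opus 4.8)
The plan is to reduce the statement directly to the completeness criterion already established, so that no new topology has to be done from scratch. The whole argument rests on two earlier facts: the identification $\beta S = X_{\mathcal{P}(S)}$ from Definition~\ref{def:betaX-discreto}, and the algebraic--geometric equivalence of Corollary~\ref{cor:Boole-completa-ED}, which says that a Boolean algebra $B$ is complete if and only if its Stone space $X_B$ is extremadamente desconectado. The strategy is simply to supply the one missing hypothesis and then quote the corollary.

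First I would verify the only genuinely new ingredient, namely that $\mathcal{P}(S)$ is a \emph{complete} Boolean algebra. This is immediate: given any family $\{A_i\}_{i\in I}$ of subsets of $S$, both $\bigcup_{i\in I} A_i$ and $\bigcap_{i\in I} A_i$ are again subsets of $S$, and a direct check against the lattice order of $\mathcal{P}(S)$ (inclusion) shows that the set-theoretic union and intersection are precisely the supremum and infimum of the family. Hence arbitrary suprema e ínfimos existen, y $\mathcal{P}(S)$ es completa.

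Then I would invoke the identity $\beta S = X_{\mathcal{P}(S)}$ and apply Corollary~\ref{cor:Boole-completa-ED} with $B = \mathcal{P}(S)$. Since $\mathcal{P}(S)$ is complete, the corollary yields that $X_{\mathcal{P}(S)}$ is extremadamente desconectado; but this space is exactly $\beta S$, which closes the argument.

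I do not expect any real obstacle here. All the topological weight---the equivalence between completeness of the álgebra de clopens and the property that closures of open sets are open---has already been carried by Theorem~\ref{thm:ED-Clop-completa} and packaged into Corollary~\ref{cor:Boole-completa-ED}. The proposition is therefore a one-line corollary once the (essentially trivial) completeness of $\mathcal{P}(S)$ has been recorded; the only thing to be careful about is to state explicitly that suprema and infima in $\mathcal{P}(S)$ coincide with unions and intersections, since it is this identification that makes the completeness manifest.
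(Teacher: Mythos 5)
Tu propuesta es correcta y coincide esencialmente con el argumento del artículo: el texto que precede a la Proposición~\ref{prop:betaS-ED} observa precisamente que $\mathcal{P}(S)$ es completa porque uniones e intersecciones arbitrarias de subconjuntos siguen siendo subconjuntos, identifica $\beta S$ con $X_{\mathcal{P}(S)}$ y aplica el Corolario~\ref{cor:Boole-completa-ED}. No hay nada que corregir.
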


En particular, $\beta\mathbb{N}$ es al mismo tiempo:
\begin{itemize}
  \item un espacio de Stone (compacto, Hausdorff, totalmente desconectado),
  \item extremadamente desconectado (por completitud de $\mathcal{P}(\mathbb{N})$),
  \item un objeto proyectivo en $\CHaus$ (como veremos enseguida).
\end{itemize}

Desde el punto de vista categórico, la compactificación de Stone--Čech se caracteriza por una propiedad universal muy fuerte.

\begin{theorem}[Propiedad universal de $\beta S$]\label{thm:betaS-universal}
Sea $S$ un conjunto discreto y sea $i:S\to \beta S$ la inmersión canónica (identificación de $S$ con un subconjunto denso de $\beta S$). Para todo compacto Hausdorff $K$ y toda aplicación $f:S\to K$, existe una única aplicación continua $\tilde{f}:\beta S \to K$ tal que
\[
\tilde{f}\circ i = f.
\]
Equivalente y categóricamente, el functor que envía un compacto $K$ al conjunto de aplicaciones continuas $\Hom_{\CHaus}(\beta S,K)$ es representable por $S$ vía
\[
\Hom_{\CHaus}(\beta S, K)\;\cong\; \Hom_{\Set}(S, U(K)),
\]
natural en $K$, donde $U:\CHaus\to \Set$ es el funtor olvidadizo.
\end{theorem}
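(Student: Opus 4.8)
The plan is to recognize that the topological half of the statement---existence and uniqueness of the continuous extension $\tilde f$---is already contained in Theorem~\ref{thm:betaX-universal-discreto} applied to the discrete space $X = S$. Indeed, since $S$ is discrete, every set map $f:S\to U(K)$ is automatically a continuous map $f:S\to K$, so the hypotheses of that theorem are met verbatim, and it supplies a unique continuous extension $\tilde f = \overline{f}:\beta S\to K$ with $\tilde f\circ i = f$ (here $i=\iota$ is the canonical dense embedding of Definition~\ref{def:betaX-discreto}). Thus no new analytic work---in particular, no repetition of the ultrafilter-convergence argument of Lemma~\ref{lem:ultrafiltro-compacto}---is required; the genuinely new content is the repackaging of this universal property as a natural bijection of Hom-sets, i.e.\ as the adjunction $\beta\dashv U$ restricted to the discrete case.

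To establish the bijection I would exhibit two mutually inverse maps. On one side, set
\[
\Theta_K:\Hom_{\CHaus}(\beta S,K)\to \Hom_{\Set}(S,U(K)),\qquad \Theta_K(g):=U(g)\circ i,
\]
restriction along $i$ viewed as a plain set map. On the other side, set
\[
\Xi_K:\Hom_{\Set}(S,U(K))\to \Hom_{\CHaus}(\beta S,K),\qquad \Xi_K(f):=\tilde f,
\]
the unique continuous extension furnished by Theorem~\ref{thm:betaX-universal-discreto}. For $\Theta_K\circ\Xi_K=\mathrm{id}$ one reads off directly $\Theta_K(\Xi_K(f)) = \tilde f\circ i = f$ from the extension clause. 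For $\Xi_K\circ\Theta_K=\mathrm{id}$ the argument is subtler: given continuous $g:\beta S\to K$, its restriction $g\circ i$ admits the canonical extension $\widetilde{g\circ i}$, but $g$ itself is already \emph{a} continuous extension of $g\circ i$ along $i$; invoking the \emph{uniqueness} half of the universal property forces $\widetilde{g\circ i}=g$, whence $\Xi_K(\Theta_K(g))=g$.

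Finally I would verify naturality in $K$: for a morphism $h:K\to K'$ in $\CHaus$, the relevant square compares $\Theta_K,\Theta_{K'}$ with postcomposition $h_*=h\circ(-)$ on the $\CHaus$ side and $U(h)_*=U(h)\circ(-)$ on the $\Set$ side. Its commutativity reduces to the identity
\[
U(h\circ g)\circ i = U(h)\circ\bigl(U(g)\circ i\bigr),
\]
which holds because $U$ is a functor and restriction along $i$ commutes with postcomposition. The main obstacle here is conceptual rather than technical: since the hard existence argument is already sealed inside Theorem~\ref{thm:betaX-universal-discreto}, the only genuine care needed is (i) to invoke uniqueness correctly in order to obtain $\Xi_K\circ\Theta_K=\mathrm{id}$---this is the step most easily glossed over---and (ii) to keep the two categorical sides honestly distinct, remembering that continuity of maps out of $S$ is vacuous precisely because $S$ is discrete, so that the forgetful functor $U$ interacts with $i$ without hidden hypotheses.
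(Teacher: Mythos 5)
Tu propuesta es correcta y sigue esencialmente el mismo camino que el artículo: el texto enuncia este teorema sin demostración separada, apoyándose implícitamente en el Teorema~\ref{thm:betaX-universal-discreto} (cuya prueba vía convergencia de ultrafiltros ya contiene todo el contenido analítico), que es exactamente la reducción que tú haces explícita. La verificación de la biyección $\Theta_K,\Xi_K$ (usando la unicidad para la dirección $\Xi_K\circ\Theta_K=\mathrm{id}$) y de la naturalidad en $K$ completa correctamente los detalles rutinarios que el artículo omite.
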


\begin{remark}[Adjunción]\label{rem:betaS-adjuncion}
El teorema anterior puede reformularse diciendo que la compactificación de Stone--Čech define un funtor
\[
\beta: \Set_{\mathrm{disc}} \longrightarrow \CHaus
\]
que es \emph{adjunto izquierdo} al funtor olvidadizo $U:\CHaus\to \Set$. En otras palabras, $\beta S$ es el "compacto Hausdorff libre generado por $S$", y la condición de ser ED refleja el hecho de que ninguna compatibilidad adicional se impone más allá de la compacidad y la Hausdorffidad.
\end{remark}

Desde el punto de vista aritmético, el caso $S=\mathbb{N}$ es especialmente significativo: muchos fenómenos combinatorios y aritméticos (como teoremas de Ramsey, teorema de Hindman, etc.) pueden reformularse como afirmaciones sobre la existencia de puntos en $\beta\mathbb{N}$ con propiedades dinámicas particulares. La rigidez ED de $\beta\mathbb{N}$ garantiza que esas propiedades se expresan en términos de la álgebra booleana completa $\mathcal{P}(\mathbb{N})$.

\subsection{Proyectividad en \texorpdfstring{$\CHaus$}{CHaus} y el Teorema de Gleason}\label{subsec:Gleason}

Finalmente, describimos el lugar de los espacios ED dentro de la categoría $\CHaus$.

\begin{definition}[Objeto proyectivo en $\CHaus$]\label{def:proyectivo-CHaus}
Un espacio compacto Hausdorff $P$ se dice \textit{proyectivo} en $\CHaus$ si para todo diagrama
\[
\begin{tikzcd}[row sep=large, column sep=large]
    & P \arrow[d, "g"] \arrow[dl, dashed, "h"'] \\
    X \arrow[r, "f", two heads] & Y
\end{tikzcd}
\]
donde $f:X\to Y$ es continua y sobreyectiva, y $g:P\to Y$ es continua, existe una aplicación continua $h:P\to X$ tal que $f\circ h = g$.
\end{definition}

Es decir, cualquier mapa desde $P$ hacia un cociente $Y$ puede \emph{levantarse} a través de la sobreyección $X\to Y$. Estos espacios juegan, en $\CHaus$, el mismo rol que los módulos proyectivos en álgebra.

El siguiente resultado de Gleason~\cite{Gleason1958} cierra el círculo.

\begin{theorem}[Gleason]\label{thm:Gleason}
Sea $P$ un espacio compacto Hausdorff. Entonces las condiciones siguientes son equivalentes:
\begin{enumerate}[label=\roman*)]
  \item $P$ es extremadamente desconectado.
  \item $P$ es proyectivo en la categoría $\CHaus$.
\end{enumerate}
\end{theorem}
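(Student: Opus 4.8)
The plan is to prove the two implications separately, using throughout the elementary reformulation that a space is ED if and only if any two disjoint open sets have disjoint closures (equivalently, $\overline{U}$ is open for every open $U$, as in Definition~\ref{def:ED}). I would first record this reformulation: if $A,B$ are open and disjoint, then $\overline{A}$ clopen forces $\overline{B}\subseteq X\setminus\overline{A}$ and hence $\overline{A}\cap\overline{B}=\varnothing$; conversely, applying the disjoint-closures property to $U$ and $X\setminus\overline{U}$ shows $\overline{U}=X\setminus(X\setminus\overline{U})$ is open. Both directions of Gleason's theorem will be reduced to this criterion.

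For (i) $\Rightarrow$ (ii), assume $P$ is ED and take a continuous surjection $f:X\twoheadrightarrow Y$ together with $g:P\to Y$. I would form the pullback $Z:=\{(p,x)\in P\times X : g(p)=f(x)\}$, a closed (preimage of the diagonal $\Delta_Y$, which is closed since $Y$ is Hausdorff) and hence compact Hausdorff subspace of $P\times X$, with projection $\pi:=\mathrm{pr}_P:Z\to P$ surjective because $f$ is. A continuous section $s$ of $\pi$ then yields the desired lift $h:=\mathrm{pr}_X\circ s$, since $f\circ h=g$. To build $s$, I would use Zorn's lemma to extract a closed $Z_0\subseteq Z$ minimal among those still surjecting onto $P$ (compactness guarantees that the intersection of a descending chain still surjects, checked fibrewise by the finite intersection property on each compact fibre $\pi^{-1}(p)$); minimality makes $\pi|_{Z_0}$ \emph{irreducible}, i.e.\ no proper closed subset of $Z_0$ maps onto $P$.

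The crux --- and the step I expect to be the main obstacle --- is showing that an irreducible surjection onto an ED space is injective. For open $W\subseteq Z_0$ set $W^{\#}:=P\setminus\pi(Z_0\setminus W)=\{p:\pi^{-1}(p)\subseteq W\}$; since $\pi$ is a closed map (compact source, Hausdorff target), $W^{\#}$ is open, and irreducibility gives $W^{\#}\neq\varnothing$ whenever $W\neq\varnothing$. A short topological argument (refining $W$ by preimages of neighbourhoods) shows $\pi(\overline{W})\subseteq\overline{W^{\#}}$. Now suppose $z_1\neq z_2$ satisfy $\pi(z_1)=\pi(z_2)$; separating them by disjoint open $W_1\ni z_1$, $W_2\ni z_2$ in the normal space $Z_0$, the sets $W_1^{\#},W_2^{\#}$ are disjoint (a common point would force $\pi^{-1}(p)\subseteq W_1\cap W_2=\varnothing$, impossible), so by ED their closures are disjoint; yet $\pi(z_i)\in\pi(\overline{W_i})\subseteq\overline{W_i^{\#}}$ for both $i$, a contradiction. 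Hence $\pi|_{Z_0}$ is a continuous bijection from a compact space to a Hausdorff space, therefore a homeomorphism, and $s:=(\pi|_{Z_0})^{-1}$ is the required section.

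For (ii) $\Rightarrow$ (i), assume $P$ is projective. I would let $D$ be the underlying set of $P$ with the discrete topology and $\epsilon:D\to P$ the identity on points (continuous since $D$ is discrete); by the universal property (Theorem~\ref{thm:betaX-universal-discreto}) it extends to $\overline{\epsilon}:\beta D\to P$, whose image is compact hence closed and contains $\epsilon(D)=P$, so $\overline{\epsilon}$ is surjective. Projectivity lifts $\mathrm{id}_P$ through $\overline{\epsilon}$, exhibiting $P$ as a retract of $\beta D$ with section $s$ and retraction $r=\overline{\epsilon}$, i.e.\ $r\circ s=\mathrm{id}_P$. Since $\beta D$ is ED (Proposition~\ref{prop:betaS-ED}, as $\mathcal{P}(D)$ is complete), it remains only to see that a retract of an ED space is ED: given disjoint open $A,B\subseteq P$, the sets $r^{-1}(A),r^{-1}(B)$ are disjoint and open in $\beta D$, hence have disjoint closures; since $s(A)\subseteq r^{-1}(A)$ and $s$ is continuous, $s(\overline{A})\subseteq\overline{s(A)}\subseteq\overline{r^{-1}(A)}$ and likewise $s(\overline{B})\subseteq\overline{r^{-1}(B)}$, so $s(\overline{A})\cap s(\overline{B})=\varnothing$, and injectivity of $s$ forces $\overline{A}\cap\overline{B}=\varnothing$. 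By the reformulation, $P$ is ED, completing the equivalence.
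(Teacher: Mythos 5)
The paper states this theorem without proof (it only cites Gleason's 1958 article and moves on to the remark about Gleason covers), so there is no in-text argument to compare yours against; what you wrote has to stand on its own, and it does. Your proof is the standard one and is correct. The reduction of (i) $\Rightarrow$ (ii) to finding a section of the pullback $\pi:Z\to P$, the Zorn's-lemma extraction of a minimal closed $Z_0$ still surjecting onto $P$, and the injectivity of the resulting irreducible map onto an ED space are exactly Gleason's original scheme. The one step you wave at --- $\pi(\overline{W})\subseteq\overline{W^{\#}}$ --- does go through as you indicate: for $p=\pi(z)$ with $z\in\overline{W}$ and $V$ an open neighbourhood of $p$, the set $W\cap\pi^{-1}(V)$ is a nonempty open subset of $Z_0$, so irreducibility gives $\varnothing\neq(W\cap\pi^{-1}(V))^{\#}\subseteq W^{\#}\cap V$; in a final write-up you should include these three lines, since this is where irreducibility actually enters. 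The converse via exhibiting $P$ as a retract of $\beta D$ (with $D$ the underlying discrete set) correctly reuses the paper's own Theorem~\ref{thm:betaX-universal-discreto} and Proposición~\ref{prop:betaS-ED}, and your verification that a retract of an ED space is ED, phrased through the disjoint-closures criterion, is sound. This argument would be a natural addition to Section~\ref{subsec:Gleason}, which currently leaves the theorem unproved.
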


\begin{remark}[Cubierta de Gleason y relación con Stone--Čech]\label{rem:cubierta-Gleason}
Como consecuencia, para todo compacto Hausdorff $K$ existe un epimorfismo continuo
\[
\pi: P(K) \twoheadrightarrow K
\]
donde $P(K)$ es ED (y por tanto proyectivo), y $\pi$ es esencialmente única con esta propiedad. El espacio $P(K)$ se llama la \emph{cubierta de Gleason} de $K$.

En el caso extremo $K=S$ discreto, la construcción recupera precisamente $\beta S$: la compactificación de Stone--Čech es la cubierta proyectiva "libre" del conjunto discreto $S$, y su estructura ED refleja la completitud de la álgebra de Boole $\mathcal{P}(S)$.
\end{remark}

\begin{exercise}\label{ex:ED-closures-disjoint}
Sea $X$ un espacio ED. Demuestra que si $U,V\subseteq X$ son abiertos disjuntos, entonces sus clausuras $\overline{U}$ y $\overline{V}$ son también disjuntas. (Pista: razona por contradicción usando que las clausuras son clopens.)
\end{exercise}

\begin{exercise}\label{ex:betaN-sucesiones}
Sea $S=\mathbb{N}$ con la topología discreta y considera $\beta\mathbb{N}$. Investiga el comportamiento de las sucesiones convergentes en $\beta\mathbb{N}$. En particular:
\begin{enumerate}[label=\alph*)]
  \item Muestra que la sucesión $(n)_{n\in\mathbb{N}}$ de puntos naturales no converge en $\beta\mathbb{N}$.
  \item Argumenta que cualquier sucesión convergente en $\beta\mathbb{N}$ debe, en cierto sentido fuerte, estabilizarse en torno a un ultrafiltro; discute cómo entra la condición ED en este fenómeno.
\end{enumerate}
\end{exercise}

\section{Epílogo: profinitos como generadores en matemática condensada}\label{sec:condensed}

Hasta aquí hemos desarrollado la teoría clásica de espacios de Stone,
profinitos y la compactificación de Stone--Čech, mostrando cómo estos
conceptos surgen naturalmente en contextos aritméticos. En esta sección
final, mostramos cómo estas construcciones encuentran su lugar natural en el
marco conceptual de las \emph{Matemáticas Condensadas} de Clausen--Scholze,
donde los profinitos no son meros ejemplos, sino los \emph{generadores
fundamentales} de una nueva categoría que unifica topología y álgebra de
manera adecuada para la aritmética contemporánea.

El problema fundamental que motiva este enfoque ya fue señalado en la
Introducción~\ref{sec:intro}: cuando trabajamos con objetos aritméticos
dotados de topologías naturales, la categoría de espacios topológicos
clásicos presenta dificultades técnicas insuperables. La solución de
Clausen--Scholze consiste en reemplazar la categoría de espacios
topológicos por una categoría de \emph{conjuntos condensados}, donde la
información topológica se codifica mediante \emph{sondas profinitas}. En
lugar de estudiar un espacio $X$ directamente, lo examinamos a través de
todas las aplicaciones continuas $S\to X$ donde $S$ recorre los espacios
profinitos.

Esta perspectiva hace que los profinitos, que ya hemos visto aparecer
naturalmente en aritmética (como $\Zp$ en el Ejemplo~\ref{ex:Zp-profinito},
grupos de Galois en la Sección~\ref{subsec:galois}, y completaciones
profinitas en general), jueguen el papel de objetos generadores sobre los
cuales se construye toda la teoría. La dualidad de Stone que hemos
desarrollado (Teorema~\ref{thm:stone-duality-categorical}) se extiende
naturalmente a este marco, estableciendo un puente entre la lógica booleana
clásica y la estructura categórica moderna. Para una exposición más detallada de estos temas, véase \cite{ScholzeCondensed2,ClausenScholzeLecturesAnalytic}.

\subsection{El sitio profinito y su topología de Grothendieck}\label{subsec:sitio-profinito}

Denotamos por $\ProFin$ a la categoría de espacios topológicos profinitos
(compactos, totalmente desconectados y Hausdorff) y aplicaciones continuas
entre ellos. Por la Proposición~\ref{prop:profinite-stone}, estos son
precisamente los espacios de Stone, y por tanto están en dualidad con las
álgebras de Boole (Teorema~\ref{thm:stone-duality-categorical}). Solo los
espacios extremadamente desconectados están en dualidad con álgebras de Boole
completas (Corolario~\ref{cor:Boole-completa-ED}).

Para convertir $\ProFin$ en un \emph{sitio de Grothendieck}, necesitamos
especificar una \emph{topología de Grothendieck} $J$ sobre esta categoría, es
decir, una colección de familias de morfismos (llamadas \emph{cubrimientos})
que satisfacen ciertos axiomas. En el caso profinito, la topología natural es
la siguiente:

\begin{definition}[Topología profinita]\label{def:topologia-profinita}
Un \emph{cubrimiento} de un espacio profinito $S$ es una familia \emph{finita} de
aplicaciones continuas $\{f_i:S_i\to S\}_{i\in I}$ (donde $I$ es un conjunto
finito) tal que:
\begin{enumerate}[label=\roman*)]
  \item cada $f_i$ es un epimorfismo en la categoría de espacios topológicos
  (es decir, $f_i(S_i)=S$);
  \item la imagen conjunta $\bigcup_{i\in I} f_i(S_i)$ es igual a $S$.
\end{enumerate}
La \emph{topología profinita} $J$ es la topología de Grothendieck generada por
estos cubrimientos finitos. La finitud del índice $I$ es esencial: garantiza
que el sitio sea "compacto" en sentido categórico, lo cual facilita
la teoría de gavillas sobre este sitio.
\end{definition}

\begin{remark}\label{rem:topologia-profinita-motivacion}
Esta definición captura la idea de que un espacio profinito puede ser
recubierto por otros profinitos de manera finita. Dado que los profinitos
son límites inversos de espacios finitos, es natural considerar cubrimientos
que reflejen esta estructura: cada cubrimiento finito $\{S_i\to S\}$ puede
pensarse como una manera de aproximar $S$ mediante los $S_i$, y la
condición de epimorfismo garantiza que no se pierde información esencial. Esta construcción es análoga a la topología de Grothendieck en el sitio de esquemas, adaptada al contexto profinito \cite{Mannion1994}.
\end{remark}

El par $(\ProFin,J)$ se llama el \emph{sitio profinito}. Este sitio tiene
propiedades técnicas importantes que lo hacen adecuado para la aritmética:

\begin{proposition}\label{prop:propiedades-sitio-profinita}
El sitio profinito $(\ProFin,J)$ satisface:
\begin{enumerate}[label=\roman*)]
  \item \emph{Finitud}: todo cubrimiento es finito.
  \item \emph{Estabilidad bajo productos fibrados}: si $\{S_i\to S\}$ es un
  cubrimiento y $T\to S$ es cualquier morfismo, entonces $\{S_i\times_S T\to
  T\}$ es un cubrimiento.
  \item \emph{Composición}: si $\{S_i\to S\}$ y $\{S_{ij}\to S_i\}$ son
  cubrimientos, entonces $\{S_{ij}\to S\}$ es un cubrimiento.
\end{enumerate}
\end{proposition}

\begin{proof}
La finitud es por definición. La estabilidad bajo productos fibrados se sigue
de que los productos fibrados de profinitos son profinitos (pues son límites
inversos de productos fibrados finitos) y de que los epimorfismos se preservan
bajo productos fibrados. La composición es una verificación directa.
\end{proof}

\begin{definition}[Conjuntos condensados]\label{def:conjunto-condensado}
Un \emph{conjunto condensado} es una \emph{gavilla de conjuntos} sobre el sitio
profinito $(\ProFin,J)$, es decir, un funtor
\[
X:\ProFin^{\mathrm{op}}\longrightarrow\Set
\]
que satisface la \emph{condición de gavilla}: para todo cubrimiento finito
$\{f_i:S_i\to S\}_{i\in I}$ en $(\ProFin,J)$, el diagrama
\[
X(S) \longrightarrow \prod_{i\in I} X(S_i) \rightrightarrows \prod_{i,j\in I} X(S_i\times_S S_j)
\]
es un ecualizador en $\Set$. Denotamos por $\Cond$ a la categoría de conjuntos
condensados (con morfismos las transformaciones naturales entre funtores).

La categoría $\Cond$ es un \emph{topos de Grothendieck}, y en particular es
\emph{abeliana} cuando consideramos grupos abelianos condensados (objetos de
$\Cond$ con estructura de grupo abeliano). Esta propiedad abre la puerta
a desarrollar álgebra homológica y cohomología en el marco condensado, lo cual es
uno de los grandes atractivos de este enfoque para la aritmética: permite
aplicar técnicas estándar de álgebra homológica a objetos que en la topología
clásica no forman una categoría abeliana.
\end{definition}

\subsubsection{La importancia de una categoría abeliana para la cohomología aritmética}\label{subsubsec:abeliano-aritmetica}

El problema que motiva las matemáticas condensadas en aritmética es
sencillo de enunciar: cuando intentamos aplicar herramientas estándar de álgebra
homológica a objetos aritméticos naturales (grupos de Galois, anillos $p$-ádicos,
variedades aritméticas), la topología clásica genera obstáculos técnicos
serios. La categoría de grupos topológicos no es abeliana, lo que impide
usar directamente la maquinaria estándar de cohomología de grupos, secuencias
exactas largas y derivación de funtores. Las cohomologías $p$-ádicas y de Galois
requieren definiciones ad hoc y cuidados técnicos constantes.

En el marco condensado, esta situación cambia radicalmente. La categoría de
grupos abelianos condensados $\Cond(\mathbf{Ab})$ es una categoría abeliana de
Grothendieck, lo que significa que:

\begin{itemize}
  \item Admite todas las construcciones estándar de álgebra homológica:
  núcleos, conúcleos, imágenes, coimágenes, secuencias exactas.
  \item Los funtores derivados (Ext, Tor) están bien definidos y se comportan
  como en categorías abelianas clásicas.
  \item La cohomología de grupos profinitos (como $G_K$ o $\Zp$) se vuelve
  simplemente la cohomología estándar de grupos abelianos condensados.
  \item Los productos tensoriales y completaciones son compatibles con la
  estructura condensada.
\end{itemize}

Esto permite reformular toda la teoría de cohomología $p$-ádica, representaciones
de Galois y geometría aritmética en un marco unificado donde las técnicas
estándar de álgebra homológica aplican directamente. La siguiente tabla resume
las diferencias clave:

\begin{table}[h]
  \centering
  \begin{tabular}{|l|p{5.5cm}|p{5.5cm}|}
    \hline
    \textbf{Aspecto} & \textbf{Topología clásica} & \textbf{Matemáticas condensadas} \\
    \hline
    Categoría de grupos & No es abeliana & Abeliana (Grothendieck) \\
    \hline
    Cohomología & Definiciones ad hoc & Máquina estándar \\
    \hline
    Objetos generadores & --- & Profinitos ($\Zp$, $G_K$, etc.) \\
    \hline
    Productos tensoriales & Problemas de compatibilidad & Bien comportados \\
    \hline
    Secuencias exactas & Limitadas & Completamente funcionales \\
    \hline
  \end{tabular}
  \caption{Comparación entre topología clásica y matemáticas condensadas en el
  contexto aritmético.}
  \label{tab:topologia-vs-condensado}
\end{table}

\begin{remark}\label{rem:condensados-interpretacion}
En términos intuitivos, un conjunto condensado es simplemente una forma de
decir que "conocemos un objeto por cómo lo ven los espacios de Stone". Para cada
profinito $S$, el conjunto $X(S)$ codifica la información que el objeto $X$
revela cuando es examinado mediante la sonda $S$. La condición de gavilla
garantiza que esta información es coherente: si cubrimos una sonda $S$ mediante
otras sondas $S_i$, la información sobre $X(S)$ puede reconstruirse a partir
de la información sobre los $X(S_i)$ y sus compatibilidades en las
intersecciones.
\end{remark}

\begin{figure}[h]
  \centering
  \begin{tikzcd}[row sep=large, column sep=large]
    X(S) \arrow[r] & \prod_{i\in I} X(S_i) \arrow[r, shift left, "p_1"] \arrow[r, shift right, "p_2"'] & \prod_{i,j\in I} X(S_i\times_S S_j)
  \end{tikzcd}
  \caption{Condición de gavilla para conjuntos condensados. El conjunto $X(S)$
  es el ecualizador del par de flechas $(p_1, p_2)$, es decir, consiste en
  aquellos elementos $(x_i)_{i\in I} \in \prod_{i\in I} X(S_i)$ que son
  compatibles: $p_1((x_i)) = p_2((x_i))$ en $\prod_{i,j\in I} X(S_i\times_S
  S_j)$.}
  \label{fig:sheaf-condition}
\end{figure}

La condición de gavilla expresa que $X(S)$ está determinado por sus valores
en los $S_i$ y las compatibilidades en las intersecciones $S_i\times_S S_j$.
Esta es la versión profinita de la condición clásica de gavilla en topología.

\begin{remark}\label{rem:condensados-vs-espacios}
La categoría $\Cond$ contiene de manera natural a la categoría de espacios
topológicos razonables (por ejemplo, compactos Hausdorff o espacios
completamente regulares). Si $T$ es un espacio topológico, podemos asociarle
su \emph{condensación} definiendo el funtor
\[
T^\diamondsuit:\ProFin^{\mathrm{op}}\longrightarrow\Set,\qquad
T^\diamondsuit(S):=\Cont(S,T),
\]
donde $\Cont(S,T)$ denota el conjunto de aplicaciones continuas $S\to T$.
\end{remark}

\begin{proposition}\label{prop:condensacion-gavilla}
Para todo espacio topológico compacto Hausdorff $T$, el funtor $T^\diamondsuit$
es una gavilla sobre $(\ProFin,J)$, es decir, $T^\diamondsuit\in\Cond$.
\end{proposition}

\begin{proof}
Sea $\{f_i:S_i\to S\}_{i\in I}$ un cubrimiento de $S$ en $(\ProFin,J)$.
Necesitamos verificar que el diagrama
\[
\Cont(S,T) \longrightarrow \prod_{i\in I} \Cont(S_i,T) \rightrightarrows \prod_{i,j\in I} \Cont(S_i\times_S S_j,T)
\]
es un ecualizador. 

Dado un sistema compatible $(g_i:S_i\to T)_{i\in I}$ (es decir, $g_i\circ
p_i = g_j\circ p_j$ en $S_i\times_S S_j$), necesitamos construir una única
$g:S\to T$ tal que $g\circ f_i = g_i$ para todo $i$. Como $\{f_i\}$ es un
cubrimiento epimórfico finito y $T$ es compacto Hausdorff, la existencia y
unicidad de $g$ se sigue de las propiedades estándar de espacios compactos
y de la compatibilidad del sistema $(g_i)$.
\end{proof}

Así, $T^\diamondsuit$ examina $T$ mediante todas
las sondas profinitas, reteniendo únicamente la información continua que es
estable respecto a límites inversos finitos. Esta construcción define un
funtor fiel
\[
(-)^\diamondsuit:\CHaus\longrightarrow\Cond,
\]
donde $\CHaus$ denota la categoría de espacios compactos Hausdorff.

\begin{figure}[h]
  \centering
  \begin{tikzpicture}[>=stealth, node distance=3.5cm, auto]
    \node (T) {$T$};
    \node (Tdiam) [right=of T] {$T^\diamondsuit$};
    
    \node (S1) [below=1.5cm of T] {$S_1$};
    \node (S2) [below=2.5cm of T] {$S_2$};
    \node (S3) [below=3.5cm of T] {$S_3$};
    \node (dots) [below=4.5cm of T] {$\vdots$};
    \node (S) [below=5.5cm of T] {$S \in \ProFin$};
    
    \draw[->] (S1) to node[left] {$\Cont(S_1,T)$} (T);
    \draw[->] (S2) to node[left] {$\Cont(S_2,T)$} (T);
    \draw[->] (S3) to node[left] {$\Cont(S_3,T)$} (T);
    \draw[->] (S) to node[left] {$\Cont(S,T)$} (T);
    
    \draw[->,dashed] (T) to node[above] {condensación} (Tdiam);
    
    \draw[->] (S1) to node[below] {$T^\diamondsuit(S_1)$} (Tdiam);
    \draw[->] (S2) to node[below] {$T^\diamondsuit(S_2)$} (Tdiam);
    \draw[->] (S3) to node[below] {$T^\diamondsuit(S_3)$} (Tdiam);
    \draw[->] (S) to node[below] {$T^\diamondsuit(S)$} (Tdiam);
    
    \node[right=0.5cm of Tdiam] {
      \begin{minipage}{4cm}
        \small
        Funtor:\\
        $\ProFin^{\mathrm{op}} \to \Set$\\
        $S \mapsto \Cont(S,T)$
      \end{minipage}
    };
  \end{tikzpicture}
  \caption{La condensación $T^\diamondsuit$ de un espacio topológico $T$ examina $T$ mediante todas las sondas profinitas $S \to T$. Cada profinito $S$ proporciona información sobre $T$ a través del conjunto $\Cont(S,T)$ de aplicaciones continuas.}
  \label{fig:condensation}
\end{figure}
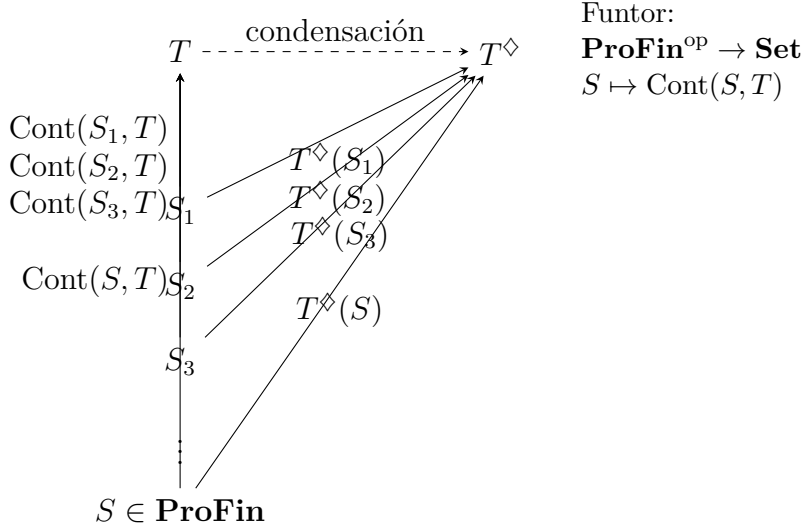

\begin{example}[Espacios discretos]\label{ex:condensado-discreto}
Sea $X$ un conjunto discreto (con topología discreta). Su condensación
$X^\diamondsuit$ satisface
\[
X^\diamondsuit(S):=\Cont(S,X)
\]
para todo profinito $S$. Como $S$ es compacto y totalmente desconectado, y
$X$ es discreto, toda aplicación continua $f:S\to X$ debe ser
\emph{localmente constante}: existe una partición finita de $S$ en clopens
$\{U_1,\ldots,U_n\}$ tal que $f$ es constante en cada $U_i$. En particular,
$f(S)$ es finito.

Más aún, como $S$ es profinito, podemos escribirlo como límite inverso de
espacios finitos $S=\varprojlim S_i$. Cualquier función continua $f:S\to X$
debe ser localmente constante (pues $S$ es compacto y totalmente desconectado,
y $X$ es discreto), y por tanto factoriza a través de algún $S_i$ finito. Esto
muestra que $\Cont(S,X)$ puede describirse en términos de funciones sobre los
$S_i$ finitos, reflejando cómo la condensación filtra automáticamente la
información: las sondas profinitas sólo ven la parte finita y discretamente
estratificada de un espacio discreto arbitrario, reflejando la estructura
profinita de $S$.
\end{example}

\begin{example}[Enteros $p$-ádicos como sondas]\label{ex:Zp-sonda}
En el Ejemplo~\ref{ex:Zp-profinito} construimos los enteros $p$-ádicos como
límite inverso
\[
\Zp=\varprojlim_{n\ge 1}\Z/p^n\Z,
\]
con aplicaciones de transición naturales $\varphi_{n+1,n}:\Z/p^{n+1}\Z\to
\Z/p^n\Z$. Por la Proposición~\ref{prop:profinite-stone}, $\Zp$ es un
espacio profinito, y por tanto puede usarse como sonda.

Si $T$ es un espacio compacto Hausdorff, su condensación verifica
\[
T^\diamondsuit(\Zp) = \Cont(\Zp,T).
\]

Ahora, como $\Zp=\varprojlim_{n\ge 1}\Z/p^n\Z$ y cada $\Z/p^n\Z$ es finito
(discreto), tenemos que
\[
\Cont(\Zp,T) \cong \varprojlim_{n\ge 1} \Cont(\Z/p^n\Z,T),
\]
donde el límite inverso se toma respecto a las aplicaciones inducidas por
$\varphi_{n+1,n}:\Z/p^{n+1}\Z\to\Z/p^n\Z$. Esto significa que una aplicación
continua $f:\Zp\to T$ queda determinada por un sistema coherente de funciones
$f_n:\Z/p^n\Z\to T$ tales que $f_n = f_{n+1}\circ\varphi_{n+1,n}$ para todo $n$.

Este hecho muestra cómo la teoría condensada reconstruye las
condiciones de compatibilidad que aparecen en la definición de límites
inversos aritméticos. La estructura profinita de $\Zp$ se refleja en cómo
$T^\diamondsuit$ lo percibe: como un sistema compatible de aproximaciones
finitas.
\end{example}

\begin{example}[Grupos de Galois como sondas]\label{ex:galois-sonda}
Sea $K$ un campo con grupo de Galois absoluto $G_K=\Gal(\overline{K}/K)$.
Por la Sección~\ref{subsec:galois}, $G_K$ es profinito:
\[
G_K \cong \varprojlim_{L/K \textit{ finita}} \Gal(L/K),
\]
donde $L$ recorre las extensiones finitas de $K$.

Si $T$ es un espacio compacto Hausdorff, entonces $T^\diamondsuit(G_K) =
\Cont(G_K,T)$ consiste en todas las aplicaciones continuas $G_K\to T$. Si
además $T$ tiene una acción continua de $G_K$, entonces podemos considerar el
subconjunto de aplicaciones $G_K$-equivariantes, que codifican información
sobre cómo $G_K$ actúa sobre $T$. Esta construcción permite estudiar espacios
con acción de Galois desde la perspectiva condensada, donde la estructura
profinita de $G_K$ se integra naturalmente en el marco de sondas.

Un ejemplo concreto: si consideramos $\Zp$ como espacio topológico con la
acción trivial de $G_K$, entonces $\Zp^\diamondsuit(G_K) = \Cont(G_K,\Zp)$
consiste en todas las aplicaciones continuas del grupo de Galois a los enteros
$p$-ádicos. Estas aplicaciones codifican información sobre cómo $G_K$ puede
"medirse" mediante $\Zp$, y son fundamentales en la teoría de representaciones
de Galois y en la cohomología $p$-ádica de variedades aritméticas.
\end{example}

\begin{example}[$\Zp$ examinándose a sí mismo]\label{ex:Zp-Zp}
Un ejemplo ilustrativo de cómo funciona la condensación es considerar
$\Zp^\diamondsuit(\Zp)$, es decir, cómo los enteros $p$-ádicos se ven a sí
mismos cuando son examinados mediante la sonda $\Zp$. Por definición,
\[
\Zp^\diamondsuit(\Zp) = \Cont(\Zp,\Zp),
\]
el conjunto de todas las aplicaciones continuas $\Zp\to\Zp$.

Como $\Zp$ es profinito y compacto, toda aplicación continua $f:\Zp\to\Zp$ está
determinada por su comportamiento en los niveles finitos. Más precisamente,
como $\Zp = \varprojlim_{n\ge 1} \Z/p^n\Z$, cada aplicación continua $f:\Zp\to\Zp$
induce un sistema compatible de aplicaciones $f_n:\Z/p^n\Z\to\Z/p^n\Z$ tales que
$f_n\circ \varphi_{n+1,n} = \varphi_{n+1,n}\circ f_{n+1}$ para todo $n$, donde
$\varphi_{n+1,n}:\Z/p^{n+1}\Z\to\Z/p^n\Z$ es la reducción módulo $p^n$.

Esto muestra que $\Cont(\Zp,\Zp)$ puede describirse en términos de sistemas
compatibles de aplicaciones sobre los niveles finitos, reflejando cómo la
estructura profinita de $\Zp$ se preserva bajo aplicaciones continuas.
Este ejemplo muestra cómo la condensación captura la estructura interna de
$\Zp$ mediante su propia estructura profinita, estableciendo un puente entre
la topología profinita y la estructura aritmética.
\end{example}

\begin{example}[La recta real condensada]\label{ex:R-condensado}
Consideremos el espacio topológico $\R$ con su topología usual. Su
condensación $\R^\diamondsuit$ asigna a cada profinito $S$ el conjunto
\[
\R^\diamondsuit(S) = \Cont(S,\R)
\]
de funciones continuas $S\to\R$. 

Aunque $\R$ no es compacto, su condensación $\R^\diamondsuit$ es un objeto
bien definido en $\Cond$. La clave es que, al examinar $\R$ mediante sondas
profinitas, capturamos únicamente la información "localmente compacta" o
"profinita" de $\R$. Por ejemplo, si $S$ es finito, entonces
$\Cont(S,\R)\cong\R^{|S|}$ es simplemente un producto cartesiano finito de
copias de $\R$.

Esta construcción ocupa un lugar central en la geometría analítica condensada de
Clausen--Scholze \cite{ClausenScholzeLecturesAnalytic}, donde espacios no compactos como $\R$ o $\C$ se estudian
mediante sus condensaciones, lo que abre la posibilidad de desarrollar análisis y geometría
en el marco condensado.
\end{example}

\subsection{Clopens como subobjetos representables}\label{subsec:clopens-subobjetos}

La dualidad de Stone que hemos desarrollado a lo largo de este artículo se
refleja de manera natural en la categoría condensada. Sea $S$ un espacio
profinito. Por el Teorema~\ref{thm:stone-representation}, el álgebra de
Boole $\Clop(S)$ está en dualidad con $S$ mismo. Esta dualidad se extiende
al marco condensado de la siguiente manera:

\begin{proposition}\label{prop:clopens-subobjetos}
Sea $S$ un espacio profinito y sea $U\subseteq S$ un clopen. Entonces la
inclusión $i:U\hookrightarrow S$ induce un monomorfismo
\[
U^\diamondsuit\hookrightarrow S^\diamondsuit
\]
en la categoría $\Cond$, donde $U^\diamondsuit$ y $S^\diamondsuit$ son las
condensaciones de $U$ y $S$ respectivamente.
\end{proposition}

\begin{proof}
Para cada profinito $T$, necesitamos verificar que la aplicación inducida
\[
\Cont(T,U)\longrightarrow\Cont(T,S)
\]
es inyectiva. Esto es inmediato: si $f,g:T\to U$ son continuas y
$i\circ f = i\circ g$, entonces $f=g$ porque $i$ es inyectiva.

Para ver que es un monomorfismo en $\Cond$, supongamos que tenemos dos
morfismos $X\rightrightarrows U^\diamondsuit$ en $\Cond$ que se igualan al
componer con $U^\diamondsuit\hookrightarrow S^\diamondsuit$. Como los
profinitos generan $\Cond$ (Teorema~\ref{thm:profinites-generadores}), basta
verificar que los morfismos coinciden al evaluar en todos los profinitos. Para
cualquier profinito $T$, la composición con $U^\diamondsuit\hookrightarrow
S^\diamondsuit$ da morfismos $X(T)\rightrightarrows\Cont(T,S)$ que son iguales
por hipótesis. Como la inclusión $U\hookrightarrow S$ es inyectiva, esto
implica que los morfismos originales $X(T)\rightrightarrows\Cont(T,U)$ son
iguales. Por tanto, los morfismos son iguales como transformaciones naturales.
\end{proof}

Esta construcción establece un morfismo inyectivo
\[
\Clop(S)\hookrightarrow\Sub_{\Cond}(S^\diamondsuit),
\]
donde $\Sub_{\Cond}(S^\diamondsuit)$ denota el conjunto de subobjetos de
$S^\diamondsuit$ en $\Cond$ (es decir, clases de equivalencia de
monomorfismos $X\hookrightarrow S^\diamondsuit$).

\begin{remark}\label{rem:boole-condensado}
Así, la estructura booleana de clopens se traduce directamente en
la lógica interna de subobjetos representables en $\Cond$, haciendo visible
la álgebra de Boole como parte del andamiaje categórico. Las
operaciones booleanas (unión, intersección, complemento) se reflejan en
operaciones sobre subobjetos en $\Cond$, estableciendo un puente entre la
lógica booleana clásica y la lógica interna de la categoría condensada.

Esta conexión muestra cómo la dualidad
de Stone, que comenzamos estudiando como una correspondencia
álgebra--topología, se extiende naturalmente al marco condensado, donde
adquiere un significado categórico más profundo.
\end{remark}

\subsection{Profinitos como generadores de la categoría condensada}

Lo que justifica todo el enfoque de Clausen--Scholze es que
los profinitos son suficientes para \emph{generar} la categoría $\Cond$ en un
sentido técnico preciso. Esta propiedad convierte a $\ProFin$ en la
base lógica y geométrica sobre la que se construye toda la estructura
condensada.

\begin{theorem}[Profinitos como generadores]\label{thm:profinites-generadores}
La categoría $\Cond$ está generada por los objetos de $\ProFin$ en el sentido
siguiente:
\begin{enumerate}[label=\roman*)]
  \item \emph{Separación}: Si $f,g:X\to Y$ son morfismos en $\Cond$ y
  $f(S)=g(S):X(S)\to Y(S)$ para todo $S\in\ProFin$, entonces $f=g$.
  
  \item \emph{Detección de isomorfismos}: Un morfismo $f:X\to Y$ en $\Cond$ es
  un isomorfismo si y sólo si $f(S):X(S)\to Y(S)$ es una biyección para todo
  $S\in\ProFin$.
  
  \item \emph{Presentación}: Todo objeto $X\in\Cond$ puede presentarse como
  colímite de un diagrama cuyos objetos son de la forma $S^\diamondsuit$ para
  $S\in\ProFin$. Más precisamente, $X$ es el colímite de su restricción a la
  subcategoría plena de profinitos, es decir, $X \cong \varinjlim_{S\to X} S^\diamondsuit$,
  donde el límite se toma sobre el diagrama de todas las sondas profinitas $S\to X$.
\end{enumerate}
\end{theorem}

\begin{proof}[Esbozo]
(i) y (ii) se siguen de que los profinitos forman una clase generadora en
$\Cond$: dos morfismos son iguales si y sólo si coinciden al evaluar en todos
los objetos generadores. Esto es una consecuencia directa de la definición de
topos de Grothendieck y del hecho de que $\ProFin$ es una subcategoría densa
de $\Cond$.

Para (iii), la idea clave es que cualquier conjunto condensado $X$ puede
expresarse como el colímite de todos sus valores en profinitos. Más
precisamente, consideramos la categoría de elementos de $X$ restringida a
$\ProFin$: los objetos son pares $(S,x)$ donde $S\in\ProFin$ y $x\in X(S)$, y
los morfismos $(S,x)\to (T,y)$ son morfismos $f:S\to T$ en $\ProFin$ tales que
$X(f)(y)=x$. El colímite de los funtores representables $S^\diamondsuit$ sobre
esta categoría es isomorfo a $X$. Esta construcción es análoga a la
presentación estándar de una gavilla como colímite de representables en teoría
de topos.
\end{proof}

\begin{remark}\label{rem:presentacion-colimite-detalle}
Una presentación más detallada de esta construcción (mostrando explícitamente
cómo $X$ se presenta como colímite de representables profinitos) puede
encontrarse en las notas de Clausen--Scholze \cite{ScholzeCondensed2} (Sección~2)
o en el survey de Bhatt \cite{Bhatt2022}. Aquí nos limitamos a enunciar el
resultado y sus consecuencias inmediatas para la detección de morfismos y
propiedades en $\Cond$, que son suficientes para entender el papel fundamental de
los profinitos como generadores de la categoría condensada.
\end{remark}

Esta propiedad tiene consecuencias profundas:

\begin{corollary}\label{cor:continuidad-profinita}
Toda noción de continuidad, morfismo o propiedad en $\Cond$ puede detectarse
únicamente mediante evaluaciones en objetos profinitos. En particular, dos
espacios topológicos compactos Hausdorff $T$ y $T'$ tienen condensaciones
isomorfas $T^\diamondsuit \cong T'^\diamondsuit$ si y sólo si existe una
biyección natural entre $\Cont(S,T)$ y $\Cont(S,T')$ para todo profinito $S$,
es decir, si los funtores $T^\diamondsuit$ y $T'^\diamondsuit$ son naturalmente
isomorfos.
\end{corollary}

\begin{example}[Detección de continuidad]\label{ex:deteccion-continuidad}
Supongamos que tenemos dos aplicaciones $f,g:T\to T'$ entre espacios
compactos Hausdorff. Entonces $f=g$ si y sólo si, para todo profinito $S$ y
toda aplicación continua $h:S\to T$, se tiene $f\circ h = g\circ h$. Esto
muestra que la continuidad en el sentido condensado se detecta completamente
mediante sondas profinitas.
\end{example}

\subsection{Ventajas del enfoque condensado para la aritmética}

El marco condensado resuelve los problemas mencionados en la
Introducción~\ref{sec:intro} de la siguiente manera:

\begin{enumerate}
  \item \emph{Categorías abelianas}: La categoría de grupos condensados (o
  módulos condensados) es abeliana, a diferencia de la categoría de grupos
  topológicos. Esto permite usar toda la maquinaria de álgebra homológica
  estándar.
  
  \item \emph{Cohomologías naturales}: Las cohomologías en el marco condensado
  se definen de manera natural usando la estructura abeliana, sin necesidad de
  definiciones ad hoc.
  
  \item \emph{Productos tensoriales}: Los productos tensoriales de módulos
  condensados son compatibles con la estructura topológica de manera natural,
  resolviendo los problemas de completación que aparecen en el marco clásico.
\end{enumerate}

Estas ventajas son posibles porque los profinitos, que ya
aparecen naturalmente en aritmética, juegan el papel de objetos generadores
(Teorema~\ref{thm:profinites-generadores}). Así, la estructura aritmética se integra
directamente en el andamiaje categórico.

\begin{remark}\label{rem:profinites-aritmetica}
Este es el punto donde la metáfora del título adquiere su significado más
profundo: los objetos aritméticos (las "piedras") están "envueltos" por el
mar de la estructura condensada, que está generada por los profinitos. Cada
sonda profinita $S\to X$ es como una ola que toca la piedra, y la
información que recoge (codificada en $X^\diamondsuit(S)$) es la única
información que la estructura condensada retiene. Los profinitos son las
"ondas" que permiten que el mar "envuelva" completamente las piedras
aritméticas.
\end{remark}

Los conjuntos condensados proporcionan la base para desarrollar
tanto la teoría categórica general como la geometría analítica condensada,
tal como se expone en detalle en \cite{ClausenScholzeLecturesAnalytic}. Esta
teoría permite formular y resolver problemas aritméticos que serían
inaccesibles en el marco topológico clásico.

\subsection{Hacia un panorama más amplio}

Como se mencionó en la Introducción, este artículo es el primero de una serie de
artículos expositivos dedicados a explorar las relaciones entre álgebra y topología.
El proyecto completo tiene como objetivo reorganizar, desde una perspectiva
aritmética y categórica, la red de correspondencias entre estructuras algebraicas,
lógicas y topológicas. Las dualidades discutidas aquí (Boole--Stone y ProFin) ocupan
la primera capa del esquema. Los artículos siguientes de esta serie incluirán:

\begin{itemize}
    \item la \textbf{dualidad de Priestley} para retículos distributivos;
    \item la \textbf{dualidad de Esakia} para álgebras de Heyting y su conexión con la lógica intuicionista;
    \item la teoría de \textbf{marcos} y \textbf{locales} como generalización topológica sin puntos;
    \item la descripción categórica de \textbf{topoi elementales} como unificación natural de estas interpretaciones;
    \item la reinterpretación de estas capas dentro del marco condensado.
\end{itemize}

La intención de este programa es situar estas estructuras clásicas en un
paisaje coherente, donde las dualidades reticulares, booleanas, intuicionistas
y locales puedan comprenderse desde un punto de vista aritmético y compatible
con la estructura profinita que subyace a la teoría condensada.

Con esta perspectiva, el presente artículo debe entenderse no como un tratado
completo, sino como el punto de partida necesario: la base booleana y profinita
sobre la cual se construyen los niveles más ricos de la arquitectura lógica y
geométrica moderna.

\section{Conclusión y ejercicios sugeridos}\label{sec:conclusion}

Hemos recorrido el arco conceptual que conecta álgebras de Boole, espacios de Stone,
límites inversos profinitos y la compactificación de Stone--Čech, culminando en la
perspectiva condensada de Clausen--Scholze. El punto central, visto desde la
perspectiva aritmética que hemos enfatizado a lo largo del artículo, es que la
dualidad
\[
\Bool \;\simeq\; \Stone^{\mathrm{op}} \;\simeq\; \ProFin^{\mathrm{op}}
\]
no es un artificio categórico, sino un ejemplo concreto de cómo estructuras
algebraicas discretas y espacios totalmente desconectados se reflejan mutuamente
a través de equivalencias de categorías. Esta dualidad es especialmente relevante
en aritmética porque muchos objetos fundamentales (enteros $p$-ádicos, grupos de
Galois, completaciones profinitas) aparecen naturalmente como espacios profinitos,
y su estructura lógica se codifica en álgebras de Boole.

La compactificación de Stone--Čech proporciona la envoltura
booleana universal de un conjunto discreto; la completación profinita recoge la
información finita detectada por todos los cocientes; y la condensación
$X^\diamondsuit$ reorganiza la información continua mediante sondas profinitas,
poniendo de manifiesto la capa lógica que subyace a la noción de continuidad en
el sentido condensado. Desde una perspectiva aritmética, estas tres
construcciones pueden interpretarse como estratos complementarios a través de
los cuales un mismo objeto discreto se analiza: el punto esencial no es elegir
uno de ellos, sino entender cómo interactúan. Por ejemplo, cuando estudiamos
$\mathbb{Z}$, podemos examinarlo a través de $\beta\mathbb{Z}$ (compactificación
booleana), $\widehat{\mathbb{Z}}$ (completación profinita) o $\mathbb{Z}^\diamondsuit$
(condensación), cada una revelando aspectos diferentes de su estructura aritmética.

La conexión con las Matemáticas Condensadas (Sección~\ref{sec:condensed}) muestra
cómo estos conceptos clásicos adquieren un nuevo significado en el marco
categórico moderno. Los profinitos, que aparecen naturalmente en aritmética (como
vimos en la Sección~\ref{sec:ejemplos-aritmeticos} con los enteros $p$-ádicos y
grupos de Galois), juegan el papel de generadores de la categoría
condensada, lo que hace posible formular y resolver problemas que serían inaccesibles en el
marco topológico clásico. Esta perspectiva es especialmente valiosa en geometría
aritmética, donde posibilita desarrollar teorías de cohomología, productos tensoriales
y completaciones que son compatibles con la estructura aritmética subyacente. La
dualidad de Stone se extiende naturalmente a este contexto, estableciendo un
puente entre la lógica booleana clásica y la estructura categórica contemporánea.

Retomando la metáfora que ha guiado este artículo: en la matemática condensada, no
solo el mar (la estructura condensada) envuelve a las piedras (los objetos
aritméticos), sino que las piedras profinitas definen la salinidad y la dinámica
de ese mar. Los profinitos no son meros objetos sumergidos en el mar condensado,
sino los elementos que determinan sus propiedades esenciales. Cada sonda profinita
$S\to X$ no solo toca la piedra, sino que contribuye a definir la naturaleza
misma del medio que la envuelve. La dualidad de Stone y la
estructura profinita no son solo herramientas para estudiar objetos aritméticos,
sino los principios organizadores que dan forma al mar mismo de la matemática
condensada.

Este artículo constituye el primer artículo de una serie expositiva dedicada a
explorar las relaciones entre álgebra y topología. Los artículos siguientes de esta
serie incluirán dualidades más generales (Priestley para retículos distributivos,
Esakia para álgebras de Heyting), teoría de marcos y locales, y topoi elementales,
todas reinterpretadas desde una perspectiva aritmética y compatible con la estructura
profinita que subyace a la teoría condensada.

\vspace{0.6cm}

\noindent\textbf{Ejercicios complementarios.}  
Los ejercicios siguientes permiten comprobar de forma directa varias de las
construcciones discutidas en el texto. No introducen material adicional, pero
recogen equivalencias y compatibilidades que aparecen de manera natural en la
teoría.

\begin{exercise}\label{ex:Zp-Cantor}
Demuestra que $\Zp$ es homeomorfo a un conjunto de Cantor generalizado. En el
caso $p=2$, escribe cada elemento de $\mathbb{Z}_2$ como serie
$\sum_{n\geq 0} a_n 2^n$ con $a_n\in\{0,1\}$, y compara con el espacio
$2^{\mathbb{N}}$ con la topología producto.
\end{exercise}

\begin{exercise}\label{ex:betaN-ex}
Recupera la compactificación de Stone--Čech $\beta\mathbb{N}$ como espacio de
Stone de la álgebra de Boole $\mathcal{P}(\mathbb{N})$, y verifica directamente
su propiedad universal (Teorema~\ref{thm:betaX-universal-discreto}). Describe
explícitamente la aplicación $\beta\mathbb{N}\to K$ para un compacto Hausdorff
$K$ dado y una función $f:\mathbb{N}\to K$.
\end{exercise}

\begin{exercise}\label{ex:betaZ-hatZ}
En el ejemplo de $\beta\mathbb{Z}$ (Ejemplo~\ref{ex:betaZ-profinita}), muestra
que la aplicación $\pi:\beta\mathbb{Z}\to \widehat{\mathbb{Z}}$ construida a
partir de los $r_n$ es continua, suprayectiva y respeta la estructura de grupo.
¿Es un isomorfismo de grupos topológicos? ¿Por qué?
\end{exercise}

\begin{exercise}\label{ex:betaX-funciones}
Sea $X$ un espacio discreto y sea $C(X)$ el anillo de funciones acotadas
$f:X\to\C$, con la norma del supremo. Sea $C(\beta X)$ el anillo de funciones
continuas $\beta X\to\C$, también con la norma del supremo. Muestra que $C(X)$
es isométricamente isomorfo a $C(\beta X)$ como álgebras normadas.

\emph{Pista:} usa la propiedad universal de $\beta X$ y la compacidad de
$\beta X$ para extender de manera única cada función acotada $f:X\to\C$ a una
función continua $\tilde f:\beta X\to\C$, y verifica que la extensión preserva
la norma.
\end{exercise}

\begin{exercise}[Sondas profinitas]\label{ex:profinet-probes}
Sean $T$ y $T'$ espacios compactos Hausdorff, y sean $f,g:T\to T'$ aplicaciones
continuas. Usa la propiedad universal de la condensación (Proposición~\ref{prop:condensacion-gavilla})
y el hecho de que los profinitos generan $\Cond$ (Teorema~\ref{thm:profinites-generadores})
para demostrar que $f = g$ si y sólo si
\[
f\circ h = g\circ h
\]
para toda aplicación continua $h:S\to T$ con $S$ profinito.

\emph{Pista:} considera las condensaciones $T^\diamondsuit$ y $T'^\diamondsuit$,
y observa que $f$ y $g$ inducen morfismos $f^\diamondsuit, g^\diamondsuit:
T^\diamondsuit\to T'^\diamondsuit$ en $\Cond$. Usa que los profinitos detectan
morfismos en $\Cond$ para concluir.

Este ejercicio muestra que los profinitos detectan la continuidad en $\Cond$ y
pone de manifiesto que $\ProFin$ genera la categoría condensada en el sentido
usual.
\end{exercise}



\begin{thebibliography}{99}

\bibitem{Bhatt2022}
B.~Bhatt,
\emph{A short introduction to condensed mathematics},
lecture notes (2022).
Available at \url{https://www.math.ias.edu/~bhatt/teaching/mat549f22/lectures.pdf}.

\bibitem{ClausenScholzeLecturesAnalytic}
D.~Clausen and P.~Scholze,
\emph{Lectures on Analytic Geometry},
preprint (2019).
Available at \url{https://people.mpim-bonn.mpg.de/scholze/Analytic.pdf}.

\bibitem{ComfortNegrepontis1974}
W.~W.~Comfort and S.~Negrepontis,
\emph{The Theory of Ultrafilters},
Springer-Verlag, Berlin, 1974.
DOI: \href{https://doi.org/10.1007/978-3-642-65980-5}{10.1007/978-3-642-65980-5}.

\bibitem{DaveyPriestley2002}
B.~A.~Davey and H.~A.~Priestley,
\emph{Introduction to Lattices and Order}, 2nd ed.,
Cambridge University Press, Cambridge, 2002.
DOI: \href{https://doi.org/10.1017/CBO9780511809088}{10.1017/CBO9780511809088}.

\bibitem{Engelking1989}
R.~Engelking,
\emph{General Topology}, revised and completed ed.,
Heldermann Verlag, Berlin, 1989.
ISBN 978-3-88538-006-4.

\bibitem{FarguesScholze2021}
L.~Fargues and P.~Scholze,
\emph{Geometrization of the local Langlands correspondence},
arXiv:2102.13459 (2021).
Available at \url{https://arxiv.org/abs/2102.13459}.

\bibitem{Gleason1958}
A.~M.~Gleason,
\emph{Projective topological spaces},
Illinois J.~Math. \textbf{2} (1958), no.~4A, 482--489.
DOI: \href{https://doi.org/10.1215/ijm/1255454110}{10.1215/ijm/1255454110}.



\bibitem{GrothendieckRS}
A.~Grothendieck,
\emph{R\'ecoltes et Semailles. R\'eflexions et t\'emoignage sur un pass\'e de math\'ematicien},
Gallimard, Paris, 2022.
ISBN 9782072986887.

\bibitem{hartshorne1977}
R.~Hartshorne,
\emph{Algebraic Geometry},
Graduate Texts in Mathematics, vol.~52,
Springer-Verlag, New York--Heidelberg, 1977.
DOI: \href{https://doi.org/10.1007/978-1-4757-3849-0}{10.1007/978-1-4757-3849-0}.

\bibitem{Hochster1969}
M.~Hochster,
\emph{Prime ideal structure in commutative rings},
Trans.~Amer.~Math.~Soc. \textbf{142} (1969), 43--60.
DOI: \href{https://doi.org/10.1090/S0002-9947-1969-0251039-0}{10.1090/S0002-9947-1969-0251039-0}.

\bibitem{Johnstone1982}
P.~T.~Johnstone,
\emph{Stone Spaces},
Cambridge Studies in Advanced Mathematics, vol.~3,
Cambridge University Press, 1982.
DOI: \href{https://doi.org/10.1017/CBO9780511526321}{10.1017/CBO9780511526321}.

\bibitem{MacLane1998}
S.~Mac~Lane,
\emph{Categories for the Working Mathematician}, 2nd ed.,
Graduate Texts in Mathematics, vol.~5,
Springer-Verlag, New York, 1998.
DOI: \href{https://doi.org/10.1007/978-1-4757-4721-8}{10.1007/978-1-4757-4721-8}.

\bibitem{Mannion1994}
D.~Mannion,
\emph{Stone duality and profinite completions},
J.~Pure Appl.~Algebra \textbf{91} (1994), no.~1-3, 213--225.
DOI: \href{https://doi.org/10.1016/0022-4049(94)90140-6}{10.1016/0022-4049(94)90140-6}.

\bibitem{RibesZalesskii2010}
L.~Ribes and P.~Zalesskii,
\emph{Profinite Groups}, 2nd ed.,
Springer, Berlin, 2010.
DOI: \href{https://doi.org/10.1007/978-3-642-01642-4}{10.1007/978-3-642-01642-4}.

\bibitem{ScholzeCondensed}
D.~Clausen and P.~Scholze,
\emph{Condensed Mathematics and Complex Geometry},
arXiv:2208.04913 (2022).
Available at \url{https://arxiv.org/abs/2208.04913}.

\bibitem{ScholzeCondensed2}
D.~Clausen and P.~Scholze,
\emph{Lectures on Condensed Mathematics},
lecture notes (2019--2020).
Available at \url{https://people.mpim-bonn.mpg.de/scholze/Condensed.pdf}.

\bibitem{Scholze2012}
P.~Scholze,
\emph{Perfectoid spaces and the weight-monodromy conjecture},
in \emph{Proceedings of the International Congress of Mathematicians},
Seoul, 2014, vol.~II, 461--486.
arXiv:1303.5948 (2012).
Available at \url{https://arxiv.org/abs/1303.5948}.

\bibitem{Scholze2019}
P.~Scholze,
\emph{Etale cohomology of diamonds},
arXiv:1709.07343 (2019).
Available at \url{https://arxiv.org/abs/1709.07343}.

\bibitem{Stone1936}
M.~H.~Stone,
\emph{The theory of representations for Boolean algebras},
Trans.~Amer.~Math.~Soc. \textbf{40} (1936), 37--111.
DOI: \href{https://doi.org/10.2307/1989664}{10.2307/1989664}.

\end{thebibliography}
\end{document}